\documentclass[11pt]{article}
\usepackage[top=1in, bottom=1in, left=1in, right=1in]{geometry}

\usepackage[linktocpage,colorlinks,linkcolor=blue,anchorcolor=blue,citecolor=blue,urlcolor=blue,pagebackref]{hyperref}
\usepackage{euscript,mdframed}
\usepackage{microtype,todonotes,relsize}
\usepackage{amsmath,amsthm}
\usepackage{algorithmic}

\usepackage{accents}
\usepackage{comment,url,graphicx,relsize}
\usepackage{amssymb,amsfonts,amsmath,amsthm,amscd,dsfont,mathrsfs,mathtools,nicefrac, bm}
\usepackage{float,psfrag,epsfig,color,xcolor,url}
\usepackage{epstopdf,bbm,mathtools,enumitem}
\usepackage{subfigure}
\usepackage[ruled,vlined]{algorithm2e}
\usepackage{tablefootnote}
\graphicspath{{Plots/}}
\usepackage{diagbox}
\usepackage{tikz}
\usetikzlibrary{calc,patterns,angles,quotes}
\usepackage{makecell}
\usepackage{multirow}





\newcommand{\grad}{\mathsf{grad}} 


\renewcommand{\d}{\mathrm{d}}
\newcommand{\M}{\mathcal{M}}
\newcommand{\N}{\mathcal{N}}
\newcommand{\D}{\mathrm{D}}

\newcommand{\E}{\mathbb{E}}
\newcommand{\RR}{\mathbb{R}}


\providecommand{\logdet}{\mathop\mathrm{log det}}
\providecommand{\argmin}{\mathop\mathrm{arg min}}

\providecommand{\tr}{\mathop\mathrm{tr}}

\providecommand{\sym}{\mathop\mathrm{sym}}

\newcommand{\Exp}{\mathsf{Exp}}
\newcommand{\proj}{\mathsf{proj}}
\newcommand{\dist}{\operatorname{dist}}

\newcommand{\T}{\operatorname{T}}

\ifdefined\nonewproofenvironments\else
\ifdefined\ispres\else
\renewenvironment{proof}{\noindent\textbf{Proof.}\hspace*{.3em}}{\qed\\}
\newenvironment{proof-sketch}{\noindent\textbf{Proof Sketch}
  \hspace*{0.em}}{\qed\bigskip\\}
\newenvironment{proof-idea}{\noindent\textbf{Proof Idea}
  \hspace*{0.em}}{\qed\bigskip\\}
\newenvironment{proof-of-lemma}[1][{}]{\noindent\textbf{Proof of Lemma {#1}.}
  \hspace*{0.em}}{\qed\\}
\newenvironment{proof-of-corollary}[1][{}]{\noindent\textbf{Proof of Corollary {#1}.}
  \hspace*{0.em}}{\qed\\}
\newenvironment{proof-of-theorem}[1][{}]{\noindent\textbf{Proof of Theorem {#1}.}
  \hspace*{0.em}}{\qed\\}
\newenvironment{proof-attempt}{\noindent\textbf{Proof Attempt}
  \hspace*{0.em}}{\qed\bigskip\\}

\fi
\newtheorem{theorem}{Theorem}[section]
\newtheorem{lemma}{Lemma}[section]

\newtheorem{proposition}{Proposition}[section]
\newtheorem{assumption}{Assumption}[section]
\newtheorem{remark}{Remark}[section]

\newtheorem{definition}{Definition}[section]




\usetikzlibrary{calc}

\allowdisplaybreaks
\usepackage[authoryear,round]{natbib}
\renewcommand*{\backref}[1]{\ifx#1\relax \else Page #1 \fi}
\renewcommand*{\backrefalt}[4]{%
  \ifcase #1 \footnotesize{(Not cited.)}%
  \or        \footnotesize{(Cited on page~#2.)}%
  \else      \footnotesize{(Cited on pages~#2.)}%
  \fi
}


\newcommand*{\colorboxed}{}
\def\colorboxed#1#{%
  \colorboxedAux{#1}%
}
\newcommand*{\colorboxedAux}[3]{%
  \begingroup
    \colorlet{cb@saved}{.}%
    \color#1{#2}%
    \boxed{%
      \color{cb@saved}%
      #3%
    }%
  \endgroup
}
\numberwithin{equation}{section}
\usepackage{xcolor} 
\usepackage{marginnote}

\newcommand{\todol}[2][]{{%
 \let\marginpar\marginnote
 \reversemarginpar
 \renewcommand{\baselinestretch}{0.8}%
 \todo[color=yellow]{#2}}}

\title{Riemannian Bilevel Optimization}
\author{
{Jiaxiang Li} \thanks{Department of Electrical and Computer Engineering, University of Minnesota, Twin Cities.  \texttt{li003755@umn.edu}}
\and
Shiqian Ma \thanks{Department of Computational Applied Math and Operations Research, Rice University. Research supported in part by NSF grants DMS-2243650, CCF-2308597, CCF-2311275 and ECCS-2326591, and a startup fund from Rice University. \texttt{sqma@rice.edu}}
\and
}
\date{Feb 2, 2024}

\begin{document}

\maketitle

\begin{abstract}
    In this work, we consider the bilevel optimization problem on Riemannian manifolds. We inspect the calculation of the hypergradient of such problems on general manifolds and thus enable the utilization of gradient-based algorithms to solve such problems. The calculation of the hypergradient requires utilizing the notion of Riemannian cross-derivative and we inspect the properties and the numerical calculations of Riemannian cross-derivatives. Algorithms in both deterministic and stochastic settings, named respectively RieBO and RieSBO, are proposed that include the existing Euclidean bilevel optimization algorithms as special cases. Numerical experiments on robust optimization on Riemannian manifolds are presented to show the applicability and efficiency of the proposed methods.
\end{abstract}

\section{Introduction}
Bilevel optimization has drawn attentions from various fields in optimization and machine learning communities, due to its wide range of applications including meta learning~\citep{rajeswaran2019meta,ji2020convergence}, hyperparameter optimization~\citep{okuno2021lp,yu2020hyper}, reinforcement learning~\citep{konda1999actor,hong2020two} and signal processing~\citep{kunapuli2008classification,flamary2014learning}. In this work, we focus on the manifold-constrained bilevel optimization problem, which can be formulated as:
\begin{equation}\label{deterministic_problem}
\begin{aligned}
    & \min_{x\in\M} \Phi(x):= f(x, y^*(x)) \\
    & \text{ s.t. }y^*(x)=\argmin_{y\in\N} g(x,y),
\end{aligned} 
\end{equation}
where $\M$ and $\N$ are $m$ and $n$-dimensional complete Riemannian manifolds, respectively. We also consider the stochastic bilevel optimization which is in the following form:
\begin{equation}\label{stochastic_problem}
\begin{aligned}
&\min _{x \in \M} \Phi(x)=f\left(x, y^{*}(x)\right):=\mathbb{E}_{\xi}\left[F\left(x, y^{*}(x) ; \xi\right)\right] \\
&\text { s.t. } y^{*}(x)=\underset{y \in \N}{\argmin}\ g(x, y):=\mathbb{E}_{\zeta}[G(x, y ; \zeta)],
\end{aligned}
\end{equation}
where $\xi$ and $\zeta$ are random variables that usually represent the randomness from the data. Such a framework allows us to utilize the stochastic gradient methods to get a desired convergence result with only a noisy estimate of the gradients for $f$ and $g$. Here we also assume that $g$ is a (geodesically) strongly convex function with respect to $y$ in both \eqref{deterministic_problem} and \eqref{stochastic_problem} so that the solution to the lower level problem $y^*(x)$ is well-defined. 


Notice that the original (Euclidean) bilevel optimization is a special case of \eqref{deterministic_problem} by taking the manifolds as the Euclidean spaces with the same dimensions:
\begin{equation}\label{bilevel_problem}
\begin{aligned}
    & \min_{x\in\RR^m} \Phi(x):= f(x, y^*(x)) \\
    & \text{ s.t. }y^*(x)=\argmin_{y\in\RR^n} g(x,y),
\end{aligned}
\end{equation}
where $f$ and $g$ are assumed to be continuously differentiable. It is worth noticing that the objective function $\Phi(x)$ is still nonconvex even if we impose convexity assumptions on $f$, which makes such a problem hard to tackle, let alone the more complicated manifold-constraint problems, namely \eqref{deterministic_problem} and \eqref{stochastic_problem}.

There has been an extensive study on the Euclidean bilevel optimization~\citep{ji2021bilevel,hong2020two,chen2021tighter,ghadimi2018approximation}. On the algorithmic sense, the bilevel optimization seeks to obtain a first-order $\epsilon$-stationary point (Definition \ref{def_deterministic_eps_stationary} and \ref{def_stochastic_eps_stationary} for deterministic and stochastic cases, respectively) with the access to the gradient oracle of $f$ and $g$, as well as the Jacobian- and Hessian-vector product, i.e. $\nabla_x\nabla_y g(x,y)v$ and $\nabla_{y}^2 g(x,y)v$, respectively. To find an $\epsilon$-stationary point, we denote the number of calls to the gradient oracle of $f$ and $g$ as $\operatorname{Gc}(f,\epsilon)$ and $\operatorname{Gc}(g,\epsilon)$, correspondingly; similarly we have the notation $\operatorname{JV}$ and $\operatorname{HV}$ for the number of oracle calls for the Jacobian- and Hessian-vector product. In the Euclidean setting, We have Tables \ref{table0} and \ref{table1} as the summary for the oracle calls to achieve an $\epsilon$-stationary point for deterministic and stochastic cases, correspondingly (and we denote $\kappa$ as the condition number of the lower level strongly convex problem).

\begin{table}[ht]
\begin{center}
\begin{small}
\begin{tabular}{c || c | c | c} 
 \hline
 Algorithm & BA & AID-BiO & ITD-BiO* \\
 \hline\hline
 $y$-update & GD & GD & GD \\
 \hline
 $\operatorname{Gc}(f,\epsilon)$ & $\mathcal{O}(\kappa^{4}\epsilon^{-1})$ & $\mathcal{O}(\kappa^{3}\epsilon^{-1})$ & $\mathcal{O}(\kappa^{3}\epsilon^{-1})$ \\
 \hline
 $\operatorname{Gc}(g,\epsilon)$ & $\mathcal{O}(\kappa^{5}\epsilon^{-5/4})$ & $\mathcal{O}(\kappa^{4}\epsilon^{-1})$ & $\mathcal{O}(\kappa^{4}\epsilon^{-1})$ \\
 \hline
 $\operatorname{JV}(g,\epsilon)$ & $\mathcal{O}(\kappa^{4}\epsilon^{-1})$ & $\mathcal{O}(\kappa^{3}\epsilon^{-1})$ & $\mathcal{O}(\kappa^{4}\epsilon^{-1})$ \\
 \hline
 $\operatorname{HV}(g,\epsilon)$ & $\mathcal{O}(\kappa^{4.5}\epsilon^{-1})$ & $\mathcal{O}(\kappa^{3.5}\epsilon^{-1})$ & $\mathcal{O}(\kappa^{4}\epsilon^{-1})$ \\
 \hline
\end{tabular}

\footnotesize{* Require explicit assumption on the sequence.}
\end{small}
\end{center}
\caption{Summary of the convergence results for different algorithms for \textbf{deterministic} Euclidean bilevel optimization, including BA~\citep{ghadimi2018approximation}, AID-BiO~\citep{ji2021bilevel} and ITD-BiO~\citep{ji2021bilevel}.}
\label{table0}
\end{table}

\begin{table}[ht]
\begin{center}
\begin{small}
\begin{tabular}{c || c | c | c | c | c} 
 \hline
 Algorithm & BSA & Stoc-BiO & TTSA* & ALSET & STABLE* \\
 \hline\hline
 batch size & $\mathcal{O}(1)$ & $\mathcal{O}(\epsilon^{-1})$ & $\mathcal{O}(1)$ & $\mathcal{O}(1)$ & $\mathcal{O}(1)$ \\ 
 \hline
 $y$-update & $\mathcal{O}(\epsilon^{-1})$ steps SGD & SGD & SGD & SGD & correction\\
 \hline
 $\operatorname{Gc}(F,\epsilon)$ & $\mathcal{O}(\kappa^{6}\epsilon^{-2})$ & $\mathcal{O}(\kappa^{5}\epsilon^{-2})$ & $\mathcal{O}(\operatorname{poly}(\kappa)\epsilon^{-2.5})$ & $\mathcal{O}(\kappa^{5}\epsilon^{-2})$ & $\mathcal{O}(\operatorname{poly}(\kappa)\epsilon^{-2})$ \\
 $\operatorname{Gc}(G,\epsilon)$ & $\mathcal{O}(\kappa^{9}\epsilon^{-2})$ & $\mathcal{O}(\kappa^{9}\epsilon^{-2})$ & $\mathcal{O}(\operatorname{poly}(\kappa)\epsilon^{-2.5})$ & $\mathcal{O}(\kappa^{9}\epsilon^{-2})$ & $\mathcal{O}(\operatorname{poly}(\kappa)\epsilon^{-2})$ \\
 $\operatorname{JV}(G,\epsilon)$ & $\mathcal{O}(\kappa^{6}\epsilon^{-2})$ & $\mathcal{O}(\kappa^{5}\epsilon^{-2})$ & $\mathcal{O}(\operatorname{poly}(\kappa)\epsilon^{-2.5})$ & $\mathcal{O}(\kappa^{5}\epsilon^{-2})$ & $\mathcal{O}(\operatorname{poly}(\kappa)\epsilon^{-2})$ \\
 $\operatorname{HV}(G,\epsilon)$ & $\mathcal{O}(\kappa^{6}\epsilon^{-2})$ & $\mathcal{O}(\kappa^{6}\epsilon^{-2})$ & $\mathcal{O}(\operatorname{poly}(\kappa)\epsilon^{-2.5})$ & $\mathcal{O}(\kappa^{6}\epsilon^{-2})$ & $\mathcal{O}(\operatorname{poly}(\kappa)\epsilon^{-2})$ \\
 \hline
\end{tabular}

\footnotesize{* For algorithms that did not specify the dependence on condition number $\kappa$, we use the notation $\operatorname{poly}(\kappa)$ to summarize the $\kappa$ dependence.}
\end{small}
\end{center}
\caption{Summary of the convergence results for different algorithms for \textbf{stochastic} Euclidean bilevel optimization, including BSA~\citep{ghadimi2018approximation}, Stoc-BiO~\citep{ji2021bilevel}, TTSA~\citep{hong2020two}, ALSET~\citep{chen2021tighter} and STABLE~\citep{chen2021single}. For the batch size we only include the $\epsilon$ dependency.}
\label{table1}
\end{table}

\subsection{Main results}
In this work, we first analyze the method of calculating and estimating the hypergradient for bilevel problems on Riemannian manifolds. Our propositions include the Euclidean bilevel problems as special cases and involve the calculation of Riemannian cross derivatives, which are of independent interests to the Riemannian optimization field.

Our contribution also lies in proposing two algorithms (RieBO and RieSBO) for both the problems \eqref{deterministic_problem} and \eqref{stochastic_problem} correspondingly. For the deterministic problem (\ref{deterministic_problem}), our analysis shows that with a multi-step inner loop and a single-step outer loop, one could yield the similar gradient complexities $\operatorname{Gc}(f,\epsilon)$, $\operatorname{Gc}(g,\epsilon)$, Jacobian- and Hessian-vector product complexities $\operatorname{JV}(g,\epsilon)$ and $\operatorname{HV}(g,\epsilon)$ same as the Euclidean counterparts in~\cite{ji2021bilevel}, as presented in Table \ref{table3}, al well as for the stochastic problem (\ref{stochastic_problem})~\citep{chen2021tighter}. It is worth noticing that for the stochastic problem, we adopt the framework of~\cite{chen2021tighter} onto Riemannian manifolds so that the batch-size of the hypergradient estimate can be $\mathcal{O}(1)$, significantly smaller than $\mathcal{O}(\epsilon^{-1})$ as in~\cite{ji2021bilevel}.

\begin{table}[ht]
\begin{center}
\begin{small}
\begin{tabular}{c || c | c} 
 \hline
 Algorithm & RieBO (Algorithm \ref{algo_bilevel_AID_ITD}) & RieSBO (Algorithm \ref{algo_stoc_bilevel_AID_ITD}) \\
 \hline\hline
 batch size & No batch & $\mathcal{O}(1)$ \\ 
 \hline
 $y$-update & GD & SGD \\
 \hline
 $\operatorname{Gc}(F,\epsilon)$ & $\mathcal{O}(\kappa^{3}\epsilon^{-1})$ & $\mathcal{O}(\kappa^{5}\epsilon^{-2})$ \\
 $\operatorname{Gc}(G,\epsilon)$ & $\mathcal{O}(\kappa^{4}\epsilon^{-1})$ & $\mathcal{O}(\kappa^{9}\epsilon^{-2})$ \\
 $\operatorname{JV}(G,\epsilon)$ & $\mathcal{O}(\kappa^{3}\epsilon^{-1})$ & $\mathcal{O}(\kappa^{5}\epsilon^{-2})$ \\
 $\operatorname{HV}(G,\epsilon)$ & $\mathcal{O}(\kappa^{3.5}\epsilon^{-1})$ & $\mathcal{O}(\kappa^{6}\epsilon^{-2})$ \\
 \hline
\end{tabular}
\end{small}
\end{center}
\caption{Summary of the convergence results for the proposed algorithms in this paper, where all the oracles are with respect to Riemannian gradients and Riemannian second-order derivatives. RieBO (Algorithm \ref{algo_bilevel_AID_ITD}) solves \eqref{deterministic_problem}, and RieSBO (Algorithm \ref{algo_stoc_bilevel_AID_ITD}) solves \eqref{stochastic_problem}.}
\label{table3}
\end{table}

Finally, we implement the proposed method in the manifold-constrained bilevel optimization problems, namely the distributionally robust optimization on Riemannian manifolds with two specific examples: robust maximum likelihood estimation and robust Karcher mean problem on the manifold of positive definite matrices. These numerical results demonstrate the efficiency and potential applicability of the proposed methods.

\subsection{Related works}
\textbf{Bilevel optimization}. Bilevel optimization problem, also known as nested optimization problem, whose origin dates back to the 50s and 70s~\citep{stackelberg1952theory,bracken1973mathematical}. Since then, extensive studies have been conducted for solving the bilevel optimization problem~\citep{shi2005extended,moore2010bilevel}. Recently, gradient-based algorithms for solving bilevel optimization problems draw attention because of their applications in machine learning, see, e.g.~\cite{domke2012generic,pedregosa2016hyperparameter,gould2016differentiating,maclaurin2015gradient,franceschi2018bilevel,liao2018reviving,shaban2019truncated,liu2020generic,li2020improved,grazzi2020iteration,lorraine2020optimizing,ji2021lower,ghadimi2018approximation,hong2020two,ji2021bilevel,chen2021tighter,chen2021single}, among which there has been discussions about the rate of convergence for specific algorithms~\citep{ghadimi2018approximation,hong2020two,ji2021bilevel,chen2021single,chen2021tighter}. These well-established convergence rate results are summarized in Tables \ref{table0} and \ref{table1}. 
{Recently, a line of work~\citep{khanduri2021near,yang2023achieving} which utilizes the momentum-based stochastic algorithms can achieve a better oracle complexity of $\mathcal{O}(\epsilon^{-1.5})$ for the Euclidean version of the stochastic problem \eqref{stochastic_problem}. We did not include this line of work since the Riemannian counterparts of these works would rely on the utilization of parallel/vector transport in the algorithm updates. We deliberately avoid these complicated operations in algorithm design and postpone them for future works.}

It is worth mentioning that minimax saddle point problems $\min_x\max_y f(x,y)$ are special cases of bilevel optimization problems by taking $g=-f$. Minimax problems are of great interests to the machine learning community \citep{daskalakis2018limit,mokhtari2020unified,yoon2021accelerated,lin2020gradient}. The analysis of this paper relates to the nonconvex-strongly-concave minimax problem on Riemannian manifolds as in \cite{huang2020gradient}, which showed that the Riemannian gradient descent ascent (RGDA) achieves oracle calls with orders $\mathcal{O}(\kappa^2\epsilon^{-1})$ for the deterministic case and $\mathcal{O}(\kappa^3\epsilon^{-2})$ for the stochastic case. These results match our convergence results in terms of the order of $\epsilon$, but has better $\kappa$ dependence. This makes sense because our proposed method is a multi-$y$ step GDmax algorithm (see~\citep{nouiehed2019solving,jin2020local}) when applied to the minimax problem and naturally has a larger $\kappa$ dependence. Recently, the authors of \cite{cai2023curvature} considered the minimax game on Riemannian manifolds under the assumption of geodesic-strongly-monotone (a generalization of strongly-convex-strongly-concave minimax game) and provided a stochastic Riemannian gradient descent-ascent approach which enjoys linear rate of convergence -- similar to its Euclidean counterpart. We point out that our work considers nonconvex upper level problems, which is different from the setting in \cite{cai2023curvature}.

Other bilevel-related ongoing research topics include decentralized bilevel optimization~\cite{chen2022decentralized,chen2023decentralized,dong2023single}, federate bilevel optimization~\cite{tarzanagh2022fednest}, bilevel without lower strongly convexity~\cite{chen2023bilevel}, to name a few.

\textbf{Optimization on Riemannian manifolds}. Optimization on Riemannian manifolds draws lots of attention recently due to its applications in various fields, including low-rank matrix completion~\citep{boumal2011rtrmc,vandereycken2013low}, phase retrieval~\citep{bendory2017non,sun2018geometric}, dictionary learning~\citep{cherian2016riemannian,sun2016complete}, dimensionality reduction~\citep{harandi2017dimensionality,tripuraneni2018averaging,mishra2019riemannian} and manifold regression~\citep{lin2017extrinsic,lin2020robust}. The manifold optimization usually transforms an manifold constrained problem into an unconstrained problem by viewing the manifold as the ambient space and using proper retraction to deal with the loss of linearity, thus achieves better convergence results. For smooth Riemannian optimization, it can be shown that Riemannian gradient descent method require $\mathcal{O}(1/{\epsilon})$ iterations to converge to an $\epsilon$-stationary point (i.e. bounding the norm square of the gradient by $\epsilon$)~\citep{boumal2018global}. Stochastic algorithms were also studied for smooth Riemannian optimization~\citep{bonnabel2013stochastic,zhou2019faster,weber2019nonconvex,zhang2016fast,kasai2018riemannian}.

The combination of bilevel optimization with Riemannian optimization is largely blank. \cite{bonnel2015semivectorial} considered a semi-vectorial bilevel optimization model over Riemannian manifolds, which deals with the situation where the lower level problem does not have unique solutions and necessary optimality conditions are provided for their surrogate model. To the best of our knowledge, there lacks convergence analysis for Riemannian bilevel optimization in the literature.

\section{Preliminaries on Riemannian Optimization}

In this part, we briefly review the basic tools we use for optimization on Riemannian manifolds~\citep{lee2006riemannian,Tu2011manifolds,boumal2022intromanifolds}. Suppose $\M$ is an $m$-dimensional differentiable manifold. The tangent space $\T_x\M$ at $x\in\M$ is a linear subspace that consists of the derivatives of all differentiable curves on $\M$ passing through $x$: $\T_x\M:=\{\gamma^{\prime}(0): \gamma(0)=x, \gamma([-\delta, \delta]) \subset \mathcal{M}\text { for some } \delta>0, \gamma \text { is differentiable}\}$. Notice that for every vector $\gamma^{\prime}(0)\in \T_x\M$, it can be defined in a coordinate-free sense via the operation over smooth functions: $\forall f\in C^{\infty}(\M)$, $\gamma^{\prime}(0)(f):=\frac{d f\circ \gamma(t)}{dt}\mid_{t=0}$. The notion of Riemannian manifold is defined as follows.

\begin{definition}[Riemannian manifold]
    Manifold $\M$ is a Riemannian manifold if it is equipped with an \textbf{inner product} on the tangent space, $\langle \cdot, \cdot \rangle_x : \T_x\M \times \T_x\M \rightarrow \RR$, that varies smoothly on $\M$. The $(0,2)$-tensor field $g$ is usually referred to as Riemannian metric.
\end{definition}

We also review the notion of the differential between manifolds here.
\begin{definition}[Differential and Riemannian gradients]
    Let $F:\M\rightarrow \mathcal{N}$ be a $C^{\infty}$ map between two differential manifolds. At each point $x\in\M$, the differential of $F$ is a mapping (also known as the push-forward):
    $$
        \D F:\T_x\M\rightarrow \T_{F(x)}\mathcal{N},
    $$
    such that $\forall\xi\in \T_x\M$, $\D F(\xi)\in \T_x\mathcal{N}$ is given by
    $$
        (\D F(\xi))(f):=\xi(f\circ F)\in\RR,\ \forall f\in C_{F(x)}^{\infty}(\M).
    $$
    
    If $\mathcal{N}=\RR$, i.e. $f\in C^\infty(\M)$, the differential of $f$ is usually denoted as $\d f$. For a Riemannian manifold with Riemannian metric $\langle\cdot, \cdot\rangle$, the Riemannian gradient for $f\in C^\infty(\M)$ is the unique tangent vector $\grad f(x)\in \T_x\M$ satisfying
    $$
        \d f(\xi) = \langle\grad f, \xi\rangle_x,\ \forall \xi\in \T_x\M.
    $$
\end{definition}

For the convergence analysis, we also need the notion of exponential mapping and parallel transport. To this end, we need to first recall the definition of a geodesic.
\begin{definition}[Geodesic and exponential mapping]
    Given $x\in\M$ and $\xi\in \T_x\M$, the geodesic is the curve $\gamma:I\rightarrow\M$, $0\in I\subset\RR$ is an open set, so that $\gamma(0)=x$, $\Dot{\gamma}(0)=\xi$ and $\nabla_{\Dot{\gamma}}\Dot{\gamma}=0$ where $\nabla:\T_x\M\times \T_x\M\rightarrow \T_x\M$ is the Levi-Civita connection defined by metric $g$. In local coordinate sense, $\gamma$ is the unique solution of the following second-order differential equations:
    $$
        \frac{d^2\gamma^k}{d t^2} + \Gamma_{i,j}^{k}\frac{d\gamma^i}{d t}\frac{d\gamma^j}{d t}=0,
    $$
    under Einstein summation convention, where $\Gamma_{i,j}^{k}$ are Christoffel symbols, again defined by metric tensor. The exponential mapping $\Exp_x$ is defined as a mapping from $\T_x\M$ to $\M$ s.t. $\Exp_x(\xi):= \gamma(1)$ with $\gamma$ being the geodesic with $\gamma(0)=x$, $\Dot{\gamma}(0)=\xi$. A natural corollary is $\Exp_x(t\xi):= \gamma(t)$ for $t\in[0, 1]$. Another useful fact is $\dist(x,\Exp_x(\xi))=\|\xi\|_x$ since $\gamma'(0)=\xi$ which preserves the speed. Here $\dist$ is the geodesic distance which connects the two points by the minimum geodesic.
\end{definition}
Throughout this paper, we always assume that $\M$ is complete, so that $\Exp_x$ is always defined for every $\xi\in \T_x\M$. For any $x,y\in\M$, the inverse of the exponential mapping $\Exp_{x}^{-1}(y)\in \T_x\M$ is called the logarithm mapping, and we have $\dist(x,y)=\|\Exp_{x}^{-1}(y)\|_x$, which derives directly from $\dist(x,\Exp_x(\xi))=\|\xi\|_x$.

With the notion of geodesic, we have the following definition of geodesic convexity and strong convexity, which are the generalizations of their Euclidean counterparts.
\begin{definition}[Geodesic (strong) convexity]
    A geodesic convex set $\Omega\subset\M$ is a set such that for any two points in the set, there exists a geodesic connecting them that lies entirely in $\Omega$. A function $h:\Omega\rightarrow \RR$ is called geodesically convex if for any $p, q\in\Omega$, we have $h(\gamma(t))\leq (1-t)h(p)+th(q)$, where $\gamma$ is a geodesic in $\Omega$ with $\gamma(0)=p$ and $\gamma(1)=q$. It is called $\mu$-geodesically strongly convex if we have $h(\gamma(t))\leq (1-t)h(p)+th(q)-\frac{\mu t(1-t)}{2}\dist(p, q)^2$.

    If $h$ is a continuously differentiable function, then it is geodesically convex if and only if (see \citet[Chapter 11]{boumal2022intromanifolds}) $h(y)\geq h(x) + \langle\grad h(x), \Exp^{-1}_{x}(y)\rangle_{x}$, and is geodesically strongly convex if and only if $h(y)\geq h(x) + \langle\grad h(x), \Exp^{-1}_{x}(y)\rangle_{x} + \frac{\mu}{2}\dist(x, y)^2$.

    If $h$ is a twice continuously differentiable function, then it is geodesically convex if and only if (see \citet[Chapter 11]{boumal2022intromanifolds}) $\frac{d^2 h(\gamma(t))}{d t^2}\geq 0$, and is geodesically strongly convex if and only if $\frac{d^2 h(\gamma(t))}{d t^2}\geq \mu$.
\end{definition}

We also present the definition of parallel transport, which is used in the assumption and the convergence analysis, but not explicitly used in the algorithm updates.
\begin{definition}[Parallel transport]
    Given a Riemannian manifold $(\M, g)$ and two points $x,y\in\M$, the parallel transport $P_{x\rightarrow y}:\T_x\M\rightarrow \T_y\M$ is a linear operator which keeps the inner product: $\forall \xi,\zeta\in \T_x\M$, we have $\langle P_{x\rightarrow y}\xi, P_{x\rightarrow y}\zeta\rangle_y = \langle\xi, \zeta\rangle_x$.
\end{definition}
Notice that the existence of parallel transport depends on the curve connecting $x$ and $y$, which is not a problem for complete Riemannian manifold since we always take the unique geodesic that connects $x$ and $y$. Parallel transport is useful in our convergence proofs since the Lipschitz condition for the Riemannian gradient requires moving the gradients in different tangent spaces ``parallel'' to the same tangent space. 

We also have the following definition of Lipschitz smoothness on the manifolds.
\begin{definition}[Geodesic Lipschitz smoothness]
    A function $h:\Omega\rightarrow \RR$ is called geodesic-Lipschitz smooth if we have:
    \begin{equation}
        \|\grad h(y) - P_{x\rightarrow y}\grad h(x)\| \leq \ell_{h}\dist(x, y).
    \end{equation}
Moreover, we have (see \cite{zhang2016fast})
    \begin{equation}
        h(y)\leq h(x) + \langle\grad h(x), \Exp^{-1}_{x}(y)\rangle_{x} + \frac{\ell_{h}}{2}\dist(x,y)^2.
    \end{equation}
\end{definition}

To proceed to the bilevel hypergradient estimation, we need the notion of Riemannian Hessian and Riemannian cross-derivatives (Jacobians) (see \cite{han2023riemannian}).

\begin{definition}[Riemannian Hessian]
    For function $f:\M\rightarrow \RR$, the Riemannian Hessian is a symmetric 2-form $H(f): \T\M\times \T\M\rightarrow \RR$ defined as: $\forall \xi, \eta\in \T\M$,
    $$
        H(f)(\xi, \eta) = \langle \nabla_\xi\grad f, \eta \rangle,
    $$
    where $\nabla$ here is the Levi-Civita connection (see \cite{lee2006riemannian}). $H$ can also be interpreted as a linear map $H(f): \T\M\rightarrow \T\M$, $\forall \xi\in \T_{x}\M$,
    $$
        H(f)(\xi) = \nabla_\xi\grad f.
    $$
\end{definition}

\begin{definition}[Riemannian cross-derivatives]
    For a smooth function defined on product manifold $f:\M\times\N\rightarrow \RR$, the Riemannian cross-derivatives is defined as a linear mapping $\grad_{x, y}^2(f):\T\M\rightarrow \T\N$ such that $\forall \xi\in \T_{x}\M$,
    $$
        \grad_{x, y}^2(f)[\xi] = \D_{x}\grad_y f(x, y) [\xi],
    $$
    where $\D_x$ is the differential with respect to variable $x$. $\grad_{y, x}^2(f)$ is defined similarly.
\end{definition}

A useful fact is that $\grad_{x, y}^2(f)$ and $\grad_{y, x}^2(f)$ are adjoint operators. 
\begin{proposition}\label{prop_cross_derivative}
    $\mathrm{grad}_{x,y}^2$ and $\mathrm{grad}_{y,x}^2$ are adjoints, i.e. 
    $$
    \langle \eta,\mathrm{grad}_{x,y}^2f(x, y)[\xi] \rangle_{y} = \langle\mathrm{grad}_{y,x}^2f(x,y)[\eta],\xi \rangle_{x}, \forall \xi\in \T_{x}\M\text{ and }\forall \eta\in \T_{y}\N,
    $$
    where $f\in\mathcal{C}^1(\M)$ is any continuously differentiable function over $\M$.
\end{proposition}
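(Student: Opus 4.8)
The plan is to reduce the identity to the symmetry of mixed second-order partial derivatives of a scalar function of two real variables, i.e.\ to Clairaut's theorem. The observation that makes this work is that, for fixed $y$, the assignment $x\mapsto\grad_y f(x,y)$ is a smooth map from $\M$ into the \emph{fixed} vector space $\T_y\N$, so its differential $\D_x\grad_y f(x,y)$ is the ordinary derivative of a vector-valued map and involves no connection (no Christoffel terms); symmetrically for $\D_y\grad_x f(x,y)$. Moreover the inner product $\langle\cdot,\cdot\rangle_y$ on $\T_y\N$ does not depend on $x$, so $\D_x$ may be passed through it, and likewise $\D_y$ through $\langle\cdot,\cdot\rangle_x$.

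Concretely, I would fix $x\in\M$, $y\in\N$, $\xi\in\T_x\M$, $\eta\in\T_y\N$, and pick smooth curves $\alpha:(-\delta,\delta)\to\M$ with $\alpha(0)=x$, $\dot\alpha(0)=\xi$, and $\beta:(-\delta,\delta)\to\N$ with $\beta(0)=y$, $\dot\beta(0)=\eta$. Set $\phi(s,t):=f(\alpha(s),\beta(t))$, a $C^2$ function on a neighbourhood of the origin in $\RR^2$. First I would compute, by the chain rule, $\partial_s\phi(s,t)=\langle\grad_x f(\alpha(s),\beta(t)),\dot\alpha(s)\rangle_{\alpha(s)}$; then, differentiating in $t$ and evaluating at $(0,0)$, use that $t\mapsto\grad_x f(x,\beta(t))$ is a curve in the fixed space $\T_x\M$ together with the bilinearity and $t$-independence of $\langle\cdot,\cdot\rangle_x$ to get $\partial_t\partial_s\phi(0,0)=\langle\D_y\grad_x f(x,y)[\eta],\xi\rangle_x=\langle\grad_{y,x}^2 f(x,y)[\eta],\xi\rangle_x$. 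By the same argument with the roles of the two factors exchanged, $\partial_s\partial_t\phi(0,0)=\langle\D_x\grad_y f(x,y)[\xi],\eta\rangle_y=\langle\eta,\grad_{x,y}^2 f(x,y)[\xi]\rangle_y$, using symmetry of the metric. Since $\phi$ is $C^2$, Clairaut's theorem gives $\partial_s\partial_t\phi(0,0)=\partial_t\partial_s\phi(0,0)$, which is exactly the claimed adjointness.

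An equivalent route, for readers who prefer coordinates, is to take charts $(u^i)$ near $x$ and $(v^a)$ near $y$: there $\grad_y f$ has components $h^{ab}(v)\,\partial_{v^b}f$ with $(h^{ab})$ the inverse metric of $\N$, hence $(\grad_{x,y}^2 f[\xi])^a=h^{ab}(v)\,\partial^2_{u^i v^b}f\,\xi^i$, and pairing with $h_{ca}\eta^c$ collapses the metric factors to leave $\eta^b\,\partial^2_{u^i v^b}f\,\xi^i$; the mirror computation on the other side leaves $\xi^i\,\partial^2_{v^a u^i}f\,\eta^a$, and the two coincide by equality of mixed partials. I do not expect a genuine obstacle; the only points needing care are the bookkeeping that $\D_x$ (resp.\ $\D_y$) acts on a map valued in a fixed tangent space, so no curvature/Christoffel contribution arises, and the tacit regularity hypothesis: the statement should be read with $f$ twice continuously differentiable, which is in any case needed for $\grad^2_{x,y}f$ and $\grad^2_{y,x}f$ to be defined.
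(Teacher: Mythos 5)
Your proof is correct, and it reaches the same underlying fact as the paper's proof --- the symmetry of mixed second partials --- but by a different and arguably cleaner route. The paper writes $\langle \eta,\grad_{x,y}^2f(x,y)[\xi]\rangle_{y}=\xi(\eta(f))$ and $\langle\grad_{y,x}^2f(x,y)[\eta],\xi\rangle_{x}=\eta(\xi(f))$, extends $\xi$ and $\eta$ to vector fields $\tilde\xi=(\xi,0)$ and $\tilde\eta=(0,\eta)$ on the product manifold, and observes that the difference is the Lie bracket $[\tilde\xi,\tilde\eta](f)$, which vanishes because the two fields live in disjoint coordinate blocks. You instead pull everything back to a single $C^2$ function $\phi(s,t)=f(\alpha(s),\beta(t))$ of two real variables and invoke Clairaut's theorem; the two mixed partials $\partial_s\partial_t\phi(0,0)$ and $\partial_t\partial_s\phi(0,0)$ are exactly the two sides of the identity, by the same chain-rule computation the paper performs implicitly. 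What your version buys is that it never needs to extend a single tangent vector to a vector field on $\M\times\N$ (a step the paper leaves informal, and on which the vanishing of the bracket tacitly depends through the choice of extension), and it makes explicit the point --- only implicit in the paper --- that $\D_x\grad_y f$ and $\D_y\grad_x f$ are differentials of maps into the \emph{fixed} vector spaces $\T_y\N$ and $\T_x\M$, so no connection or Christoffel terms enter and the metrics can be pulled through the outer derivative. You are also right to flag that the statement's hypothesis $f\in\mathcal{C}^1$ is too weak: both the definition of the cross-derivatives and either form of the mixed-partials argument require $f$ to be (at least) twice continuously differentiable, which is what the paper's definition of $\grad^2_{x,y}$ assumes anyway. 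Your coordinate computation in the last paragraph is a faithful unpacking of the same argument and matches the paper's parenthetical remark about verifying the bracket in local coordinates.
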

\begin{proof}
    Note 
    $$
        \langle \eta,\mathrm{grad}_{x,y}^2f(x, y)[\xi] \rangle_{y}=\xi( \langle \eta, \mathrm{grad}_y f(x,y) \rangle_y) = \xi(\eta(f)),
    $$
    and similarly
    $$
        \langle\mathrm{grad}_{y,x}^2f(x,y)[\eta],\xi \rangle_{x}=\eta(\xi(f)).
    $$

    Note that here $\xi$ and $\eta$ are actually acting on different coordinates of $f$. We can extend $\Tilde{\xi}(x,y)=(\xi(x), 0)\in \T_x\M\times\T_y\N$ and similarly $\Tilde{\eta}(x,y)=(0, \eta(y))\in \T_x\M\times\T_y\N$. Now subtracting the above equations we have 
    $$
    \langle \eta,\mathrm{grad}_{x,y}^2 f(x,y)[\xi] \rangle_{y} -\langle\mathrm{grad}_{y,x}^2f(x,y)[\eta],\xi \rangle_{x} = [\Tilde{\xi}, \Tilde{\eta}](f),
    $$
    where $[\Tilde{\xi}, \Tilde{\eta}] = \Tilde{\xi}\Tilde{\eta} - \Tilde{\eta}\Tilde{\xi}$ is the Lie bracket. It is easy to verify in local coordinates that $[\Tilde{\xi}, \Tilde{\eta}]$ is zero since $\Tilde{\xi}$ and $\Tilde{\eta}$ act on disjoint local coordinates.
\end{proof}

\section{Bilevel hypergradient estimation on Riemannian manifolds}

We first inspect the calculation of the hypergradient for problem (\ref{deterministic_problem}), namely the Riemannian gradient $\grad \Phi(x)$. Notice that $y^*$ is actually a map $\M\rightarrow \N$, thus we need to follow the notion of the differential of maps between manifolds. We can calculate the Riemannian gradient $\grad \Phi(x)$ as follows.
\begin{proposition}
    The Riemannian gradient $\grad \Phi(x)$ is given by:
    \begin{equation}\label{grad_Phi}
    \grad \Phi(x) = \grad_x f(x,y^*(x)) - \grad_{y, x}^2 g(x,y^*(x))[v^*(x)],
    \end{equation}
    where $v^*(x)\in T_{y^*(x)}\N$ is the solution of the following equation:
    \begin{equation}\label{AID_subproblem}
        H_y(g(x,y^*(x))) (v) = \grad_{y}f(x,y^*(x)),
    \end{equation}
    where $H_y$ is the Riemannian Hessian for the $y$ variable.
\end{proposition}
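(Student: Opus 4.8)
The plan is to carry the Euclidean implicit-differentiation derivation of the hypergradient over to the manifold setting, replacing ordinary derivatives by covariant derivatives wherever vectors move between tangent spaces, and to invoke Proposition~\ref{prop_cross_derivative} at the very end to convert $\grad_{x,y}^2 g$ into $\grad_{y,x}^2 g$. First I would record that $y^{*}$ is, locally around any fixed $x$, a smooth map $\M\to\N$: geodesic strong convexity of $g(x,\cdot)$ makes $y^{*}(x)$ the unique zero of the vector field $y\mapsto\grad_y g(x,y)$, and it makes $H_y(g(x,y^{*}(x)))$ positive definite, hence invertible; the implicit function theorem applied in local coordinates (normal coordinates around $y^{*}(x)$, any chart around $x$) then produces a smooth local branch $x\mapsto y^{*}(x)$ with well-defined differential $\D y^{*}(x):\T_x\M\to\T_{y^{*}(x)}\N$. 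Consequently $\Phi=f\circ(\mathrm{id}_\M,y^{*})$ is differentiable, and the chain rule together with the orthogonal splitting $\T_{(x,y)}(\M\times\N)=\T_x\M\oplus\T_y\N$ of the product metric gives, for $\xi\in\T_x\M$ and $y=y^{*}(x)$,
$$
\d\Phi(x)[\xi]=\langle\grad_x f(x,y),\xi\rangle_x+\langle\grad_y f(x,y),\D y^{*}(x)[\xi]\rangle_y .
$$

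The key step is to compute $\D y^{*}(x)$ by differentiating the stationarity identity $\grad_y g(x,y^{*}(x))\equiv 0$. Fix a curve $x(t)$ with $x(0)=x$ and $\dot x(0)=\xi$, and set $y(t)=y^{*}(x(t))$; then $t\mapsto\grad_y g(x(t),y(t))$ is a vector field along $t\mapsto y(t)$ in $\N$ that vanishes identically, so its covariant derivative (Levi-Civita connection of $\N$) at $t=0$ is zero. Viewing $(x,y)\mapsto\grad_y g(x,y)$ as a section of the pullback bundle $\pi_{\N}^{*}\T\N$ over $\M\times\N$, whose fibre over $(x,y)$ is $\T_y\N$ and therefore depends on $y$ only, this covariant derivative splits as
$$
0=\grad_{x,y}^2 g(x,y)[\xi]+H_y(g(x,y))\big[\D y^{*}(x)[\xi]\big],
$$
the first summand being the plain $x$-differential of $\grad_y g(\cdot,y)$ at fixed $y$ (which is exactly $\D_x\grad_y g=\grad_{x,y}^2 g$) and the second the covariant $y$-derivative, i.e.\ the Riemannian Hessian $H_y(g)$, contracted with $\dot y(0)=\D y^{*}(x)[\xi]$. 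Since $H_y(g(x,y))$ is invertible, $\D y^{*}(x)[\xi]=-H_y(g(x,y))^{-1}\grad_{x,y}^2 g(x,y)[\xi]$.

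To finish, I would substitute this into the chain-rule formula and use that $H_y(g(x,y))$, hence its inverse, is self-adjoint, followed by Proposition~\ref{prop_cross_derivative}:
$$
\langle\grad_y f,\D y^{*}(x)[\xi]\rangle_y=-\big\langle H_y(g)^{-1}\grad_y f,\ \grad_{x,y}^2 g[\xi]\big\rangle_y=-\langle v^{*}(x),\grad_{x,y}^2 g[\xi]\rangle_y=-\langle\grad_{y,x}^2 g[v^{*}(x)],\xi\rangle_x,
$$
where $v^{*}(x)$ is precisely the solution of \eqref{AID_subproblem}. Hence $\d\Phi(x)[\xi]=\langle\grad_x f(x,y)-\grad_{y,x}^2 g(x,y)[v^{*}(x)],\ \xi\rangle_x$ for every $\xi\in\T_x\M$, and \eqref{grad_Phi} follows from the defining property of the Riemannian gradient.

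I expect the main obstacle to be making the splitting in the key step rigorous: one must justify that the covariant derivative along the curve of the composite vector field $t\mapsto\grad_y g(x(t),y^{*}(x(t)))$ decomposes exactly into a ``$\grad_{x,y}^2 g$ in the $x$-direction'' term plus an ``$H_y(g)$ in the $y$-direction'' term, with no cross terms. This is where the product-manifold structure, and the precise convention that $\D_x$ in the definition of $\grad_{x,y}^2 g$ means differentiation inside the fixed vector space $\T_y\N$, are essential; a clean treatment either uses the connection induced on the pullback bundle $\pi_{\N}^{*}\T\N$ together with its compatibility with the product structure, or works in product coordinates (normal around $y^{*}(x)$) and verifies that only the $\N$-block Christoffel symbols contribute, yielding exactly the Hessian correction.
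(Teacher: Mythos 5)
Your proof is correct and follows essentially the same route as the paper's: chain rule for $\Phi=f(\cdot,y^{*}(\cdot))$, implicit differentiation of the stationarity condition $\grad_y g(x,y^{*}(x))=0$ to obtain $\grad_{x,y}^2 g[\xi]+H_y(g)[\D y^{*}(x)[\xi]]=0$, and then Proposition~\ref{prop_cross_derivative} to pass from $\grad_{x,y}^2 g$ to its adjoint $\grad_{y,x}^2 g$. The only cosmetic difference is that you explicitly invert $H_y(g)$ and invoke its self-adjointness, whereas the paper pairs the identity directly with $v^{*}(x)$ (using the same symmetry implicitly); your added justification of the smoothness of $y^{*}$ via the implicit function theorem and of the covariant-derivative splitting is welcome detail that the paper leaves tacit.
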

\begin{proof}
    By chain rule,
    \begin{equation}
        \d \Phi(x) =\d_x f(x,y^*(x)) + \d_{y}f(x,y^*(x))\circ (\D y^*(x)),
    \end{equation}
    where $\d$ and $\D$ represent the differential operators. Notice that the above equation holds in the cotangent space. Since the Riemannian gradients are defined as $\grad \Phi(x)\in \T_{x}\M$ s.t. $\forall\xi\in \T_{x}\M$, $\operatorname{d}\Phi(x)(\xi)=\langle\grad\Phi(x), \xi\rangle$, we get from the above equality that
    \begin{equation}\label{temp0}
        \langle\grad \Phi(x), \xi\rangle =\langle\grad_x f(x,y^*(x)), \xi\rangle +\langle\grad_{y}f(x,y^*(x)), \D y^*(x)(\xi)\rangle.
    \end{equation}
    
    Now we have the following optimality condition from the $y$ lower-level problem:
    $$
        \grad_{y}g(x,y^*(x))=0.
    $$
    By taking the differential for $x$ on both sides of the above formula we get: $\forall \xi\in \T_{x}\M$,
    \begin{equation}\label{lower_optimality}
        \grad_{x, y}^2 g(x,y^*(x))(\xi) + H_y(g(x,y^*(x)))(\D y^*(x)(\xi))=0.
    \end{equation}
    Now taking the inner-product of both sides of the above equation with $v^*(x)$, we get
    $$
        \langle v^*(x), \grad_{x, y}^2 g(x,y^*(x))(\xi) \rangle + \langle  \grad_y f(x,y^*(x)), \D y^*(x)(\xi)\rangle = 0.
    $$
    Therefore we get the final result by plugging back the above equation to \eqref{temp0} and applying Proposition \ref{prop_cross_derivative}.
\end{proof}

When both $\M$ and $\N$ are embedded submanifolds (of two different Euclidean spaces $\RR^{M}$ and $\RR^{N}$), and $\bar{f}:\RR^{M}\times\RR^{N}\rightarrow\RR$ which restricts to $f:\M\times\N\rightarrow\RR$ naturally. The Riemannian gradients of $f$ are simply projections of the Euclidean gradients onto the tangent spaces:
\begin{equation}
    \grad_x f(x,y)=\proj_{\T_x\M}(\nabla_x \bar{f}(x, y)),\ \grad_y f(x,y)=\proj_{\T_y\N}(\nabla_y \bar{f}(x, y)),
\end{equation}
and the cross-derivatives are calculated as follows as a matrix:
\begin{equation}
    \grad_{x, y}^2 f(x,y) = P_y (\nabla_{x, y}^2 \bar{f}(x, y)) P_x,
\end{equation}
where $P_x=\proj_{\T_x\M}\in\RR^{M\times M}$ and $P_y=\proj_{\T_y\N}\in\RR^{N\times N}$ are projection matrices onto tangent spaces, and $\nabla_{x, y}^2 \bar{f}(x, y)\in \RR^{N\times M}$ is the regular partial gradient, namely $[\nabla_{x, y}^2 \bar{f}(x, y)]_{j, i} = \frac{\partial^2 \bar{f}}{\partial y_j\partial x_i}$.

In practice we cannot solve the inner minimization and (\ref{AID_subproblem}) exactly. Suppose we have a point $y\in\N$, we can solve the approximate problem of \eqref{AID_subproblem}:
\begin{equation}\label{AID_subproblem_approx}
    H_y(g(x,y)) [v] = \grad_{y}f(x,y)
\end{equation}
with an $N$-step conjugate gradient method, yielding $\hat{v}^N(x, y)$, then we can estimate
\begin{equation}
    \grad\Phi(x) \approx \grad_x f(x,y) - \grad_{y, x}^2 g(x,y)[\hat{v}^N(x, y)],
\end{equation}
which we further refer to as the approximate implicit differentiation (AID) estimate of \eqref{grad_Phi}. For the rest of the paper we denote $h_{g}^{k, t}:=\grad_y g(x^k, y^{k,t})$ and
\begin{equation}\label{approximate_AID}
    h_{\Phi}(x, y):=\grad_x f(x,y) - \grad_{y, x}^2 g(x,y)[\hat{v}^N(x, y)].
\end{equation}
We abbreviate the notation by $h_{\Phi}^k:=h_{\Phi}(x^k, y^k)$ in the algorithms.

For the stochastic problem (\ref{stochastic_problem}), we have an estimate of the gradient of the stochastic function $G(x,y;\zeta)$ described as follows (see~\cite{hong2020two,chen2021tighter}). We first update the inner problem $y^{t}\leftarrow \Exp_{y^{t-1}}(-\alpha\grad_{y}G(x,y^{t-1};\zeta^{t-1}))$ for $t=1,...,T$. Meanwhile the estimate for the gradient of $F$ and the second-order gradient of $G$ will require us to further have independent samples $\xi$ and $\zeta_{(q)}$, $q=0,1,...,Q$, so that we define the stochastic gradient estimator as:
\begin{equation}\label{stoch_grad_estimate}
    \grad \Phi\left(x\right)\approx\grad_x F(x,y^T;\xi) - \grad_{y, x}^2 G(x,y^T;\zeta_{0})[v_{Q}(x, y^T)],
\end{equation}
where $v_{Q}$ is the approximation of \eqref{AID_subproblem}, defined as (see \cite[Lemma 1]{hong2020two}):
\begin{equation}\label{vq}
    v_{Q}(x, y):=\eta Q\prod_{q=1}^{Q'}(I - \eta H_y( G(x, y;\zeta_{(q)})) ) [\grad_{y} F\left(x, y ; \xi\right)],
\end{equation}
where $Q'$ is drawn uniformly from $\{0,1,...,Q-1\}$ and the extra parameter $\eta$ will be later determined to ensure a better approximation to \eqref{AID_subproblem}, motivated by the Neumann series $\sum_{i=0}^{\infty}U^i = (I-U)^{-1}$.

From now on we denote $\Tilde{h}_{g}^{k, t}=\grad_{y}G(x^k,y^{k, t};\zeta_{k, t})$ and
\begin{equation}\label{approximate_Neumann_stochastic}
    \Tilde{h}_{\Phi}^k := \grad_x F(x^k,y^k;\xi_k) - \grad_{y, x}^2 G(x^k,y^k;\zeta_{k, (0)})[v_{Q}^k],
\end{equation}
where
\begin{equation*}
    v_{Q}^k:=\eta Q\prod_{q=1}^{Q'}(I - \eta H_y( G(x^k, y^k;\zeta_{k, (q)})) ) [\grad_{y} F (x^k, y^k ; \xi_k)].
\end{equation*}

\section{Deterministic Algorithm RieBO and Its Convergence}

We propose RieBO (Algorithm \ref{algo_bilevel_AID_ITD}) for the deterministic bilevel manifold optimization \eqref{deterministic_problem}. The algorithm is a generalization of its Euclidean counterpart proposed in \cite{ji2021bilevel} where we employ conjugate gradient method to solve the hypergradient estimation problem \eqref{approximate_AID}. 

For the deterministic case, we utilize the following notion of stationarity:
\begin{definition}\label{def_deterministic_eps_stationary}
    A point $x\in\M$ is called an $\epsilon$-stationary point for \eqref{deterministic_problem} if $\|\nabla\Phi(x)\|^2\leq\epsilon$.
\end{definition}

We need the following assumptions for the convergence analysis.
\begin{assumption}\label{assumption_1}
    The manifolds $\M$ and $\N$ are complete Riemannian manifolds. Moreover, $\N$ is a Hadamard manifold whose sectional curvature is lower bounded by $\iota < 0$ \footnote{We make this assumption so that the lower function $g$ can be geodesically strongly convex, see \cite{zhang2016first}.}. 
\end{assumption}
We use the notation $\langle\cdot, \cdot\rangle_x$ and $\langle\cdot, \cdot\rangle_y$ to represent their Riemannian metrics, for $x\in\M$ and $y\in\N$. The corresponding norms are $\|\cdot\|_x$ and $\|\cdot\|_y$. Note that from now on we may omit the subscript since the corresponding manifold and tangent space can be identified by the vector in the ``$\cdot$" position.

Following \cite{zhang2016first}, it is very important that the quantity $\tau(\iota, \dist(y^{k, t}, y^*(x^k)))$ is bounded during the lower level update, where 
\begin{equation}
    \tau(\iota, c):=\frac{\sqrt{|\iota|c}}{\tanh(\sqrt{|\iota|c})}.
\end{equation}
Therefore we need the following assumption.

\begin{assumption}\label{assumption_2}
    The lower level objective function $g(x,y)$ is $\mu$-geodesically strongly convex with respect to $y$. Note that the total objective $\Phi(x)=f(x,y^*(x))$ may still be nonconvex. Moreover, we assume that the quantity $\tau(\iota, \dist(y^{k, t}, y^*(x^k)))$ is always upper bounded by $\tau$ for all $k$ and $t$ \footnote{Note that this assumption is satisfied if the lower level problem is conducted in a compact subset in $\N$ and assuming that the iterates of the algorithm stay in this compact region, see \cite{zhang2016first}.}.
\end{assumption}

\begin{assumption}[Smoothness]\label{assumption_3}
    For simplicity denote $z=(x,y)$ and $z'=(x',y')$, also denote $\dist(z,z')=\sqrt{\dist(x,x')^2 + \dist(y,y')^2}$ (note that these two distances are on different manifolds). Moreover, $f$ and $g$ satisfy the following assumptions. 
    \begin{itemize}
        \item $f$ satisfies $\ell_{f, 0}$-Lipschitzness:
        $$
            f(z) - f(z')\leq \ell_{f, 0}\dist(z,z').
        $$
        \item $\grad f=[\grad_{x}f, \grad_{y} f]$ and $\grad g=[\grad_{x}g, \grad_{y} g]$ are $\ell_{f, 1}$ and $\ell_{g, 1}$Lipschitz, i.e.,
        $$
        \begin{aligned}
            &\|\grad f(z) - P_{z'\rightarrow z}\grad f(z')\|\leq \ell_{f, 1}\dist(z,z') \\
            &\|\grad g(z) - P_{z'\rightarrow z}\grad g(z')\|\leq \ell_{g, 1}\dist(z,z'),
        \end{aligned}
        $$
        where $P_{z'\rightarrow z}$ is the parallel transport on the product manifold $\M\times\N$ and the norm on the left hand side is also induced by the product Riemannian metric on $\M\times \N$ \footnote{It is worth noticing that in our convergence result, we need $\tau<\ell_{g, 1}/2$. We comment here that this assumption is not a big issue since if $g$ is Lipschitz smooth with parameter $\ell_{g, 1}$, then it is also Lipschitz smooth with any parameters $\ell > \ell_{g, 1}$. We could always pick up a parameter that satisfies $\tau<\ell_{g, 1}/2$, with some sacrifice of the convergence speed. Note that in the Euclidean case $\tau=0$ so $\tau<\ell_{g, 1}/2$ holds naturally.}.
    \end{itemize}
\end{assumption}

\begin{assumption}[Hessian Smoothness]\label{assumption_4}
    The second-order derivatives $\grad_{x,y}^2g(z)$ and $H_{y}(g(z))$ are $\ell_{g, 2}$ Lipschitz (we use the same constant here for simplicity), i.e. for $z=(x,y)$ and $z'=(x',y')$, we have
    $$
    \begin{aligned}
        &\left\|\grad_{x,y}^2g(z) - P_{y^*(x')\rightarrow y^*(x)}\circ\grad_{x,y}^2g(z') \circ P_{x'\rightarrow x}\right\|_{\mathrm{op}}\leq \ell_{g, 2}\dist(z,z') \\
        &\left\|H_{y}(g(z)) - P_{y^*(x')\rightarrow y^*(x)}\circ H_{y}(g(z')) \circ P_{y^*(x)\rightarrow y^*(x')}\right\|_{\mathrm{op}}\leq \ell_{g, 2}\dist(z,z').
    \end{aligned}
    $$
    Here the norms on the left hand side are the operator norms. We will keep the subscript $\mathrm{op}$ whenever it comes to the operator norm, in order to distinguish it from the norms on the tangent spaces.
\end{assumption}

We have the following smoothness lemma under the above assumptions.
\begin{lemma}\label{lemma0}
    Suppose Assumptions \ref{assumption_1}, \ref{assumption_2}, \ref{assumption_3} and \ref{assumption_4} hold, then functions $y^*(x)$ and $\Phi(x):=f(x,y^*(x))$ satisfy: $\forall x,x'\in\M$
    \begin{subequations}\label{phi_smoothness}
    \begin{align}
        &\dist(y^*(x),y^*(x'))\leq \kappa \dist(x,x'),\ \kappa = \frac{\ell_{g, 1}}{\mu}\label{phi_smoothness-1}\\
        &\|\D y^*(x)-P_{y^*(x')\rightarrow y^*(x)}\circ \D y^*(x')\circ P_{x'\rightarrow x}\|_{\mathrm{op}}\leq L_{y^*}\dist(x,x')\label{phi_smoothness-2}\\
        &\|\grad \Phi(x) - P_{x'\rightarrow x}\grad \Phi(x')\|\leq L_{\Phi}\dist(x,x'),\label{phi_smoothness-3}
    \end{align}
    \end{subequations}
    where
    \begin{equation}
        L_{y^*}:= \bigg(1 + \frac{\ell_{g, 2}}{\mu}\bigg)\frac{\ell_{g, 2}}{\mu}\sqrt{1+\kappa^2}=\mathcal{O}(\kappa^2),
    \end{equation}
    and
    \begin{equation}
        L_{\Phi}:= \ell_{f, 1} \sqrt{1+\kappa^2} + \ell_{g, 2} \frac{\ell_{f, 0}}{\mu}  + \ell_{g, 1}\bigg(\frac{\ell_{f, 0}\ell_{g, 2}}{\mu^2}\sqrt{1+\kappa^2} + \frac{\ell_{f, 1}}{\mu}\bigg)=\mathcal{O}(\kappa^3).
    \end{equation}
\end{lemma}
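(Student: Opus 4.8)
The plan is to prove the three estimates \eqref{phi_smoothness-1}, \eqref{phi_smoothness-2}, \eqref{phi_smoothness-3} in order, since each relies on the previous one. The first bound is the Riemannian analogue of the classical Lipschitz estimate for the solution map of a strongly convex lower-level problem. To establish it, I would start from the optimality conditions $\grad_y g(x, y^*(x)) = 0$ and $\grad_y g(x', y^*(x')) = 0$, transport one into the tangent space of the other, and combine the $\mu$-strong geodesic convexity of $g(x', \cdot)$ (which lower-bounds an inner product of transported gradients by $\mu \dist(y^*(x), y^*(x'))^2$, via the first-order characterization in the Geodesic (strong) convexity definition) with the $\ell_{g,1}$-Lipschitzness of $\grad g$ on the product manifold (which upper-bounds the change caused by moving $x$ to $x'$). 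This is where Assumption \ref{assumption_2}'s control of $\tau(\iota, \cdot)$ and the Hadamard structure of $\N$ from Assumption \ref{assumption_1} enter: they are needed so that the comparison inequalities between geodesic triangles are valid and so the constants do not blow up. Dividing out one factor of $\dist(y^*(x), y^*(x'))$ yields $\kappa = \ell_{g,1}/\mu$.

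For \eqref{phi_smoothness-2}, the object $\D y^*(x)$ is already characterized implicitly by \eqref{lower_optimality}, i.e. $\D y^*(x) = -H_y(g(x,y^*(x)))^{-1} \circ \grad_{x,y}^2 g(x, y^*(x))$. I would write the analogous identity at $x'$, conjugate it by the appropriate parallel transports $P_{x'\to x}$ and $P_{y^*(x')\to y^*(x)}$, and then estimate the difference of the two operators by the standard ``$A^{-1}B - \tilde A^{-1}\tilde B$'' decomposition: split into a term controlled by $\|B - \tilde B\|_{\mathrm{op}}$ and a term controlled by $\|A^{-1} - \tilde A^{-1}\|_{\mathrm{op}} = \|A^{-1}(\tilde A - A)\tilde A^{-1}\|_{\mathrm{op}} \le \mu^{-2}\|A - \tilde A\|_{\mathrm{op}}$, using $\|H_y(g)^{-1}\|_{\mathrm{op}} \le 1/\mu$ from strong convexity. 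Both $\|A - \tilde A\|_{\mathrm{op}}$ and $\|B - \tilde B\|_{\mathrm{op}}$ are bounded by Assumption \ref{assumption_4} evaluated at the points $z = (x, y^*(x))$ and $z' = (x', y^*(x'))$; then $\dist(z, z') = \sqrt{\dist(x,x')^2 + \dist(y^*(x), y^*(x'))^2} \le \sqrt{1 + \kappa^2}\,\dist(x,x')$ by \eqref{phi_smoothness-1}, and I also use $\|\grad_{x,y}^2 g\|_{\mathrm{op}} \le \ell_{g,1}$ (Lipschitzness of the gradient implies the second derivative is bounded by the same constant). Collecting the pieces gives exactly the stated $L_{y^*} = (1 + \ell_{g,2}/\mu)(\ell_{g,2}/\mu)\sqrt{1+\kappa^2} = \mathcal{O}(\kappa^2)$.

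For \eqref{phi_smoothness-3}, I would use the closed form \eqref{grad_Phi}, writing $\grad\Phi(x) = \grad_x f(x, y^*(x)) - \grad_{y,x}^2 g(x, y^*(x))[v^*(x)]$ — or equivalently, using the chain-rule form $\grad\Phi(x) = \grad_x f(x,y^*(x)) + \D y^*(x)^* [\grad_y f(x, y^*(x))]$ where $\D y^*(x)^*$ is the adjoint, which is cleaner for the Lipschitz estimate. Transport $\grad\Phi(x')$ to $\T_x\M$ and split the difference into three groups: (i) the change in $\grad_x f$, bounded via $\ell_{f,1}$-Lipschitzness of $\grad f$ composed with \eqref{phi_smoothness-1}, giving the $\ell_{f,1}\sqrt{1+\kappa^2}$ term; (ii) the change in $\D y^*$ applied to a bounded vector — here $\|\grad_y f\| \le \ell_{f,0}$ by $\ell_{f,0}$-Lipschitzness of $f$, and $\|\D y^*(x)\|_{\mathrm{op}} \le \kappa$ from \eqref{phi_smoothness-1} — controlled by \eqref{phi_smoothness-2}, contributing the terms with $L_{y^*}$-type structure (this reorganizes into the $\ell_{g,2}\ell_{f,0}/\mu$ and $\ell_{g,1}\ell_{f,0}\ell_{g,2}/\mu^2 \sqrt{1+\kappa^2}$ pieces after substituting the formula for $L_{y^*}$ and $\kappa$); and (iii) the change in $\grad_y f$ itself, again via $\ell_{f,1}$-Lipschitzness and $\|\D y^*\|_{\mathrm{op}} \le \kappa$, giving the $\ell_{g,1}\ell_{f,1}/\mu$ term (after writing $\kappa = \ell_{g,1}/\mu$). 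Throughout, the triangle inequality for operator and vector norms across tangent spaces relies on parallel transport being an isometry (Parallel transport definition). The main obstacle is bookkeeping: keeping track of which transports act on which slots of the product manifold, ensuring the insertion of intermediate transported quantities is done consistently so the telescoping is legitimate, and verifying that the algebra of collecting constants reproduces exactly the stated $L_{y^*}$ and $L_{\Phi}$ — the analysis is conceptually a direct transcription of the Euclidean argument of \cite{ghadimi2018approximation}, with the one genuinely Riemannian subtlety being the role of curvature (through $\tau$ and the Hadamard assumption) in making step \eqref{phi_smoothness-1} rigorous.
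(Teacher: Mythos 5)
Your proposal is essentially correct, and for parts \eqref{phi_smoothness-2} and \eqref{phi_smoothness-3} it follows the same route as the paper: the implicit representation $\D y^*(x) = -H_y(g(x,y^*(x)))^{-1}\circ\grad_{x,y}^2 g(x,y^*(x))$, the resolvent identity $\|A^{-1}-PB^{-1}P^{-1}\|_{\mathrm{op}}\le \mu^{-2}\|\cdot\|_{\mathrm{op}}$, the insertion of parallel transports to telescope, and the bound $\dist(z,z')\le\sqrt{1+\kappa^2}\,\dist(x,x')$ from \eqref{phi_smoothness-1}. The one genuine divergence is step \eqref{phi_smoothness-1}: the paper bounds $\|\D y^*(x)\|_{\mathrm{op}}\le\ell_{g,1}/\mu$ pointwise and then invokes a mean-value-theorem argument for Lipschitz maps between Riemannian manifolds, whereas you use the variational argument, combining the two first-order strong-convexity inequalities (so that $\mu\dist(y^*(x),y^*(x'))^2\le -\langle\grad_y g(x',y^*(x)),\Exp_{y^*(x)}^{-1}(y^*(x'))\rangle$ after using $\grad_y g(x',y^*(x'))=0$) with the $\ell_{g,1}$-Lipschitzness of $\grad g$ in $x$. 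Your route is arguably more self-contained, since it avoids the manifold mean-value theorem that the paper only cites externally; one small correction is that this step does \emph{not} actually need the curvature-dependent comparison inequality (the quantity $\tau$) or \citet[Corollary 8]{zhang2016first} as you suggest — only the first-order characterization of geodesic strong convexity, which is already available on the Hadamard manifold $\N$; $\tau$ enters later, in the inner-loop analysis of Lemma \ref{lemma_inner}, not here. In \eqref{phi_smoothness-3} you group the second term around $\D y^*$ and its adjoint acting on $\grad_y f$, while the paper groups it around $v^*(x)=H_y(g)^{-1}\grad_y f$ and proves the auxiliary bound \eqref{eq_bound_v_star} on $\|v^*(x)-P_{x'\to x}v^*(x')\|$; the two decompositions are algebraically equivalent and give the same $\mathcal{O}(\kappa^3)$ constant, but the paper's choice has the side benefit that \eqref{eq_bound_v_star} is reused verbatim in Lemma \ref{lemma2}, so if you intend to continue to the convergence proof you may prefer the paper's bookkeeping.
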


\begin{proof}
    For \eqref{phi_smoothness-1}, by \eqref{lower_optimality} and our Assumptions \ref{assumption_2}, \ref{assumption_3} and \ref{assumption_4}, we have
    \[
    \begin{split}
        \|\D y^*(x)\|_{\mathrm{op}}=\|(H_{y}(g(x,y^*(x))))^{-1}\circ\grad_{x,y}^2g(x,y^*(x))\|_{\mathrm{op}}\leq\frac{\ell_{g, 1}}{\mu}.
    \end{split}
    \]
    We thus obtain \eqref{phi_smoothness-1} by a mean value theorem argument in local coordinates (see \href{https://math.stackexchange.com/questions/1450725/lipschitz-maps-between-riemannian-manifolds/1450813#1450813}{this link} for a detailed proof).

    For \eqref{phi_smoothness-2}, we have
    \[
    \begin{split}
        &\|\D y^*(x)-P_{y^*(x')\rightarrow y^*(x)}\circ \D y^*(x')\circ P_{x'\rightarrow x}\|\\
        =&\|(H_{y}(g(x,y^*(x))))^{-1}\circ\grad_{x,y}^2g(x,y^*(x))-P_{y^*(x')\rightarrow y^*(x)}\circ (H_{y}(g(x',y^*(x'))))^{-1}\circ\grad_{x,y}^2g(x',y^*(x'))\circ P_{x'\rightarrow x}\| \\
        \leq & \|(H_{y}(g(x,y^*(x))))^{-1}\circ\grad_{x,y}^2g(x,y^*(x))\\&\qquad\qquad\qquad\qquad\qquad - (H_{y}(g(x,y^*(x))))^{-1} \circ P_{y^*(x')\rightarrow y^*(x)}\circ\grad_{x,y}^2g(x',y^*(x'))\circ P_{x'\rightarrow x}\| \\
        &+ \|(H_{y}(g(x,y^*(x))))^{-1} \circ P_{y^*(x')\rightarrow y^*(x)}\circ\grad_{x,y}^2g(x',y^*(x'))\\&\qquad\qquad\qquad\qquad\qquad -P_{y^*(x')\rightarrow y^*(x)}\circ (H_{y}(g(x',y^*(x'))))^{-1}\circ\grad_{x,y}^2g(x',y^*(x'))\|\\
        \leq & \|(H_{y}(g(x,y^*(x))))^{-1}\| \|\grad_{x,y}^2g(x,y^*(x)) - P_{y^*(x')\rightarrow y^*(x)}\circ\grad_{x,y}^2g(x',y^*(x'))\circ P_{x'\rightarrow x}\| \\
        &+ \|(H_{y}(g(x,y^*(x))))^{-1}  -P_{y^*(x')\rightarrow y^*(x)}\circ (H_{y}(g(x',y^*(x'))))^{-1}\circ P_{y^*(x)\rightarrow y^*(x')}\| \|\grad_{x,y}^2g(x',y^*(x'))\| \\
        \leq & \frac{\ell_{g, 2}}{\mu} \sqrt{\dist(x, x')^2+\dist(y^*(x), y^*(x'))^2}\\& + \ell_{g, 1} \|(H_{y}(g(x,y^*(x))))^{-1} - P_{y^*(x')\rightarrow y^*(x)}\circ (H_{y}(g(x',y^*(x'))))^{-1}\circ P_{y^*(x)\rightarrow y^*(x')}\| \\
        \leq & \frac{\ell_{g, 2}}{\mu} \sqrt{1+\kappa^2}\dist(x, x')+ \ell_{g, 1} \|(H_{y}(g(x,y^*(x))))^{-1} - P_{y^*(x')\rightarrow y^*(x)}\circ (H_{y}(g(x',y^*(x'))))^{-1}\circ P_{y^*(x)\rightarrow y^*(x')}\|_{\mathrm{op}}
    \end{split}
    \]
    where in the second last inequality we used Assumptions \ref{assumption_2}, \ref{assumption_3} and \ref{assumption_4}, and we used \eqref{phi_smoothness-1} for the last inequality. Denote $H_1 = H_{y}(g(x,y^*(x)))$, $P=P_{y^*(x')\rightarrow y^*(x)}$ so that $P_{y^*(x)\rightarrow y^*(x')}=P^{-1}$ and $H_2=H_{y}(g(x',y^*(x')))$, then the last term in the above formula becomes:
    \begin{align}\label{temp3}
    \begin{aligned}
        &\|H_1^{-1} - P H_2^{-1} P\|_{\mathrm{op}}
        = \|H_1^{-1} P^{-1}(H_2 - P^{-1} H_1 P) H_2^{-1} P^{-1}\|_{\mathrm{op}}\\
        \leq & \frac{1}{\mu^2}\|H_2 - P^{-1} H_1 P\|_{\mathrm{op}}
        \leq \frac{\ell_{g, 2}}{\mu^2} \sqrt{\dist(x, x')^2+\dist(y^*(x), y^*(x'))^2}\\
        \leq & \frac{\ell_{g, 2}}{\mu^2}\sqrt{1+\kappa^2}\dist(x, x').
    \end{aligned}
    \end{align}
    Here we used Assumptions \ref{assumption_2}, \ref{assumption_4}, \eqref{phi_smoothness-1} and the fact that the parallel transport $P=P_{y^*(x')\rightarrow y^*(x)}$ is an isometry, i.e., $\|P\|_{\mathrm{op}}=1$. Plugging this back we get
    \begin{align*}
        \|\D y^*(x)-P_{y^*(x')\rightarrow y^*(x)}\circ \D y^*(x')\circ P_{x'\rightarrow x}\|_{\mathrm{op}}
        \leq \bigg(\frac{\ell_{g, 2}}{\mu} + \frac{\ell_{g, 1} \ell_{g, 2}}{\mu^2}\bigg)\sqrt{1+\kappa^2}\dist(x, x'),
    \end{align*}
    which gives \eqref{phi_smoothness-2}.
    
    We now show \eqref{phi_smoothness-3}. Using \eqref{grad_Phi}, we get
    \begin{align}\label{temp4}
    \begin{aligned}
        &\|\grad \Phi(x) - P_{x'\rightarrow x}\grad \Phi(x')\| \leq \|\grad_x f(x,y^*(x)) - P_{x'\rightarrow x}\grad_x f(x',y^*(x'))\|\\
        &+ \| \grad_{y, x}^2 g(x,y^*(x))[v^*(x)] - P_{x'\rightarrow x}\grad_{y, x}^2 g(x',y^*(x'))[v^*(x')] \|.
    \end{aligned}
    \end{align}
    
    For the first term on the right hand side of \eqref{temp4}, by Assumption \ref{assumption_3} and \eqref{phi_smoothness-1}, we have
    \[
    \begin{split}
        &\|\grad_x f(x,y^*(x)) - P_{x'\rightarrow x}\grad_x f(x',y^*(x'))\|\\\leq& \ell_{f, 1} \sqrt{\dist(x, x')^2+\dist(y^*(x), y^*(x'))^2}
        \leq \ell_{f, 1} \sqrt{1+\kappa^2}\dist(x, x').
    \end{split}
    \]
    
    For the second term on the right hand side of \eqref{temp4}, we have
    \[
    \begin{split}
        & \| \grad_{y, x}^2 g(x,y^*(x))[v^*(x)] - P_{x'\rightarrow x}\grad_{y, x}^2 g(x',y^*(x'))[v^*(x')] \| \\
        = & \| \grad_{y, x}^2 g(x,y^*(x))[v^*(x)] - P_{x'\rightarrow x}\grad_{y, x}^2 g(x',y^*(x'))\bigg[ P_{x\rightarrow x'} P_{x'\rightarrow x}[v^*(x')]\bigg] \|\\
        \leq & \| \grad_{y, x}^2 g(x,y^*(x))[v^*(x)] - P_{x'\rightarrow x}\grad_{y, x}^2 g(x',y^*(x'))P_{x\rightarrow x'} [v^*(x)] \| \\
        &+ \| P_{x'\rightarrow x}\grad_{y, x}^2 g(x',y^*(x'))P_{x\rightarrow x'} [v^*(x)] - P_{x'\rightarrow x}\grad_{y, x}^2 g(x',y^*(x'))P_{x\rightarrow x'} P_{x'\rightarrow x}[v^*(x')]\| \\
        \leq & \| \grad_{y, x}^2 g(x,y^*(x))- P_{x'\rightarrow x}\grad_{y, x}^2 g(x',y^*(x'))P_{x\rightarrow x'} \|_{\mathrm{op}} \|v^*(x)\| \\
        &+ \|\grad_{y, x}^2 g(x',y^*(x'))\|_{\mathrm{op}}\| v^*(x) - P_{x'\rightarrow x}v^*(x')\|.
    \end{split}
    \]
    Since $v^*(x)$ is the solution of \eqref{AID_subproblem}, also by Assumptions \ref{assumption_2} and \ref{assumption_3}, we have $\|v^*(x)\|\leq \ell_{f, 0}/\mu$, also
    \begin{equation}\label{eq_bound_v_star}
    \begin{split}
    &\| v^*(x) - P_{x'\rightarrow x}v^*(x')\|\\
    =&\| (H_y(g(x,y^*(x))))^{-1}\grad_{y}f(x,y^*(x)) - P_{x'\rightarrow x}(H_y(g(x',y^*(x'))))^{-1}\grad_{y}f(x',y^*(x'))\| \\
    =&\| (H_y(g(x,y^*(x))))^{-1}\grad_{y}f(x,y^*(x)) - P_{x'\rightarrow x}(H_y(g(x',y^*(x'))))^{-1}P_{x\rightarrow x'}P_{x'\rightarrow x}\grad_{y}f(x',y^*(x'))\| \\
    \leq & \| (H_y(g(x,y^*(x))))^{-1} - P_{x'\rightarrow x}(H_y(g(x',y^*(x'))))^{-1}P_{x\rightarrow x'}\|_{\mathrm{op}}\|\grad_{y}f(x,y^*(x))\| \\
    &+ \|(H_y(g(x',y^*(x'))))^{-1}\|_{\mathrm{op}}\| \grad_{y}f(x,y^*(x)) - P_{x'\rightarrow x}\grad_{y}f(x',y^*(x'))\| \\
    \leq & \bigg(\frac{\ell_{f, 0}\ell_{g, 2}}{\mu^2}\sqrt{1+\kappa^2} + \frac{\ell_{f, 1}}{\mu}\bigg)\dist(x, x'),
    \end{split}
    \end{equation}
    where we used \eqref{temp3}, Assumptions \ref{assumption_2} and \ref{assumption_3}.
    
    Combining the above bounds and plugging it to \eqref{temp4}, we get
    \begin{align*}
    \begin{aligned}
        &\|\grad \Phi(x) - P_{x'\rightarrow x}\grad \Phi(x')\|\\
        \leq & \ell_{f, 1} \sqrt{1+\kappa^2}\dist(x, x') + \ell_{g, 2} \frac{\ell_{f, 0}}{\mu} \dist(x, x') + \ell_{g, 1}\bigg(\frac{\ell_{f, 0}\ell_{g, 2}}{\mu^2}\sqrt{1+\kappa^2} + \frac{\ell_{f, 1}}{\mu}\bigg)\dist(x, x'),
    \end{aligned}
    \end{align*}
    which proves \eqref{phi_smoothness-3}.
\end{proof}

\begin{algorithm}[!ht]
\SetKwInOut{Input}{input}
\SetKwInOut{Output}{output}
\SetAlgoLined
\Input{$K$, $T$, $N$(steps for conjugate gradient), stepsize $\{\alpha_k,\beta_k\}$, initializations $x^0\in\M, y^0\in\N$}
    \For{$k=0,1,2,...,K-1$}{
        Set $y^{k, 0}=y^{k-1}$\;
        \For{$t=0,...,T-1$}{
            Update $y^{k, t+1}\leftarrow \Exp_{y^{k, t}}(-\beta_k h_{g}^{k, t})$ with $h_{g}^{k, t}:=\grad_y g(x^k, y^{k,t})$ \;
        }
        Set $y^{k}\leftarrow y^{k, T}$\;
        
        Update $x^{k+1}\leftarrow\Exp_{x^{k}}(-\alpha_k h_{\Phi}^k)$ as in \eqref{approximate_AID}, where $\hat{v}^N(x^k, y^k)$ is given by an $N$-step conjugate gradient update, with $\hat{v}^0(x^k, y^k)=P_{y^{k-1}\rightarrow y^k}\hat{v}^N(x^{k-1}, y^{k-1})$\;
    }
 \label{algo_bilevel_AID_ITD}
 \caption{Algorithm for {\bf Rie}mannian (deterministic) {\bf B}ilevel {\bf O}ptimization (\bf{RieBO})}
\end{algorithm}


\begin{theorem}\label{theorem1}
    Suppose Assumptions \ref{assumption_1}, \ref{assumption_2}, \ref{assumption_3} and \ref{assumption_4} hold, and take the parameters $\beta_k=\beta\leq\frac{1}{\ell_{g, 1}}$, $\alpha_k=\alpha\leq\frac{1}{8 L_{\Phi}}$, $T\geq\mathcal{O}(\kappa)$ and conjugate gradient iteration number $N\geq\mathcal{O}(\sqrt{\kappa})$. Then RieBO (Algorithm \ref{algo_bilevel_AID_ITD}) satisfies:
    \begin{equation}
        \frac{1}{K}\sum_{k=0}^{K-1}\|\grad \Phi(x^{k})\|^2\leq \mathcal{O}\left(\frac{L_{\Phi}}{K}\right).
    \end{equation}
    The specific choice parameters are given in the proof for the simplicity of the statement. In order to achieve an $\epsilon$-accurate stationary point, the complexity is given by:
    \begin{itemize}
        \item Gradients: $\operatorname{Gc}(f,\epsilon)=\mathcal{O}(\kappa^3\epsilon^{-1})$, $\operatorname{Gc}(g,\epsilon)=\mathcal{O}(\kappa^4\epsilon^{-1})$;
        
        \item Jacobian and Hessian-vector products: $\operatorname{JV}(g, \epsilon)=\mathcal{O}(\kappa^3\epsilon^{-1})$, $\operatorname{HV}(g, \epsilon)=\mathcal{O}(\kappa^{3.5}\epsilon^{-1})$.
    \end{itemize}
\end{theorem}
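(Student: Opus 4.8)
The argument is a Lyapunov (potential-function) analysis, generalizing the Euclidean AID-BiO proof of \cite{ji2021bilevel}. First I would apply the $L_\Phi$-geodesic smoothness of $\Phi$ from Lemma~\ref{lemma0} (inequality \eqref{phi_smoothness-3}, together with the descent inequality of \cite{zhang2016fast}) at the outer update $x^{k+1}=\Exp_{x^k}(-\alpha h_\Phi^k)$, giving
\begin{equation*}
\Phi(x^{k+1}) \le \Phi(x^k) - \alpha\langle\grad\Phi(x^k),\, h_\Phi^k\rangle + \tfrac{L_\Phi\alpha^2}{2}\|h_\Phi^k\|^2 .
\end{equation*}
Writing $e_k := h_\Phi^k - \grad\Phi(x^k)$ (both vectors lie in $\T_{x^k}\M$, so no parallel transport is needed to compare them), Young's inequality, the bound $\|h_\Phi^k\|^2\le 2\|\grad\Phi(x^k)\|^2+2\|e_k\|^2$, and the choice $\alpha\le 1/(8L_\Phi)$ turn this into $\Phi(x^{k+1}) \le \Phi(x^k) - \tfrac{3\alpha}{8}\|\grad\Phi(x^k)\|^2 + \alpha\|e_k\|^2$. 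Hence the whole proof reduces to bounding $\sum_k\|e_k\|^2$ by a constant plus a sufficiently small multiple of $\sum_k\|\grad\Phi(x^k)\|^2$.

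\textbf{Error decomposition.} Let $\bar h_\Phi(x,y):=\grad_x f(x,y)-\grad_{y,x}^2 g(x,y)[v^*(x,y)]$ be the exact counterpart of $h_\Phi(x,y)$ in \eqref{approximate_AID}, where $v^*(x,y)$ solves $H_y(g(x,y))[v]=\grad_y f(x,y)$; this is well posed because $H_y(g)$ has spectrum in $[\mu,\ell_{g,1}]$ by Assumptions~\ref{assumption_2}--\ref{assumption_3}, and $\bar h_\Phi(x,y^*(x))=\grad\Phi(x)$ by \eqref{grad_Phi}. Split $e_k=\big(h_\Phi(x^k,y^k)-\bar h_\Phi(x^k,y^k)\big)+\big(\bar h_\Phi(x^k,y^k)-\bar h_\Phi(x^k,y^*(x^k))\big)$. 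The first bracket is at most $\ell_{g,1}c_k$, where $c_k:=\|\hat v^N(x^k,y^k)-v^*(x^k,y^k)\|$ is the conjugate-gradient residual; the second bracket is a Lipschitz-in-$y$ estimate obtained exactly as in the proof of \eqref{phi_smoothness-3} (now with $x$ fixed, using Assumptions~\ref{assumption_3}--\ref{assumption_4}, the bound $\|v^*\|\le\ell_{f,0}/\mu$, and \eqref{phi_smoothness-1}), and is $\mathcal{O}(\kappa^2)\,\delta_k$ with $\delta_k:=\dist(y^k,y^*(x^k))$. Thus $\|e_k\|^2\lesssim \kappa^4\delta_k^2+\ell_{g,1}^2 c_k^2$.

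\textbf{Two coupled recursions.} For $\delta_k$: the inner loop performs $T$ Riemannian-gradient-descent steps on $g(x^k,\cdot)$ --- $\mu$-geodesically strongly convex and $\ell_{g,1}$-smooth on the Hadamard manifold $\N$ --- warm-started at $y^{k-1}$; the linear convergence of RGD in this regime (\cite{zhang2016first}, with the curvature factor $\tau$ kept bounded by Assumption~\ref{assumption_2}), together with $T=\Theta(\kappa)$, yields $\delta_k\le\lambda\big(\delta_{k-1}+\dist(y^*(x^{k-1}),y^*(x^k))\big)\le\lambda\big(\delta_{k-1}+\kappa\alpha\|h_\Phi^{k-1}\|\big)$, where $\lambda<1$ can be made as small as desired by enlarging the constant in $T=\Theta(\kappa)$ and \eqref{phi_smoothness-1} bounds the drift of $y^*$. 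For $c_k$: conjugate gradient on a system of condition number $\le\kappa$, run for $N=\Theta(\sqrt\kappa)$ steps and warm-started at $P_{y^{k-1}\to y^k}\hat v^N(x^{k-1},y^{k-1})$, contracts the distance to $v^*$ by a small constant per outer step, up to the drift $\|v^*(x^k,y^k)-P_{y^{k-1}\to y^k}v^*(x^{k-1},y^{k-1})\|=\mathcal{O}(\alpha\|h_\Phi^{k-1}\|+\delta_{k-1}+\delta_k)$, which is again a Lemma~\ref{lemma0}-type Lipschitz estimate for $v^*$ (cf.~\eqref{eq_bound_v_star}). Squaring and summing over $k$, and using that $\alpha=\mathcal{O}(\kappa^{-3})$ and $\kappa\alpha=\mathcal{O}(\kappa^{-2})$ render all cross-coupling coefficients negligible, this coupled linear system gives $\sum_k(\delta_k^2+c_k^2)\le C_0+C_1\sum_k\alpha^2\|h_\Phi^k\|^2$ with $C_1\kappa^4\alpha^2$ arbitrarily small; combined with $\|h_\Phi^k\|^2\le 2\|\grad\Phi(x^k)\|^2+2\|e_k\|^2$ and the bound on $\|e_k\|^2$ above, this closes the recursion and yields $\sum_k\|e_k\|^2\le C_0'+c\sum_k\|\grad\Phi(x^k)\|^2$ with $c<3/8$.

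\textbf{Conclusion and main obstacle.} Plugging back into the per-step descent, telescoping over $k=0,\dots,K-1$, absorbing the $\sum_k\|\grad\Phi(x^k)\|^2$ term into the left side, and dividing by $K$ gives $\frac1K\sum_k\|\grad\Phi(x^k)\|^2=\mathcal{O}\big((\Phi(x^0)-\Phi^*)/(\alpha K)\big)=\mathcal{O}(L_\Phi/K)$; the cleanest packaging is to define $V_k=\Phi(x^k)+a\delta_k^2+b c_k^2$ (with index shifts lining up the warm starts) as a Lyapunov function decreasing by $\Theta(\alpha)\|\grad\Phi(x^k)\|^2$ at each step and to telescope. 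Then $L_\Phi=\mathcal{O}(\kappa^3)$ forces $K=\mathcal{O}(\kappa^3\epsilon^{-1})$, while each outer step costs $\mathcal{O}(1)$ gradients of $f$, $T=\mathcal{O}(\kappa)$ gradients of $g$, $\mathcal{O}(1)$ Jacobian-vector products and $N=\mathcal{O}(\sqrt\kappa)$ Hessian-vector products, which multiplies out to the stated complexities. The hard part is the coupled-recursion bookkeeping: one must verify that the contraction factors produced by $T=\Theta(\kappa)$ and $N=\Theta(\sqrt\kappa)$ genuinely dominate the cross-coupling through the drifts of $y^*$ and $v^*$ (all of which must be tracked with the correct parallel transports), so that the $(\delta_k,c_k)$ system is stable and its forcing term $\alpha\|h_\Phi^{k-1}\|$ is summable against the descent --- and, on the Riemannian side, that the curvature constant $\tau$ enters the inner-loop rate only as a bounded multiplicative factor, consistent with the step-size restriction $\beta\le 1/\ell_{g,1}$ and the condition $\tau<\ell_{g,1}/2$.
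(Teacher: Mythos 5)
Your proposal is correct and follows essentially the same route as the paper: a descent inequality from the $L_\Phi$-smoothness of $\Phi$, a decomposition of the hypergradient error into an inner-loop tracking term and a conjugate-gradient residual (the paper's Lemma \ref{lemma1}), a coupled geometric recursion for $\dist(y^{k,0},y^*(x^k))^2$ and the warm-started CG error driven by $\alpha^2\|\grad\Phi(x^{k-1})\|^2$ (Lemma \ref{lemma2}), and a final telescoping in which the accumulated gradient terms are absorbed into the left-hand side. The only cosmetic difference is your packaging as a single Lyapunov function $V_k$ versus the paper's explicit $\left(\tfrac12\right)^{k-j}$ geometric sums, which are equivalent.
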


To prove this theorem, we need the following lemmas. The first lemma quantifies the error when optimizing \eqref{AID_subproblem_approx} with $N$-step conjugate gradient method, see \citet[Equation (17)]{grazzi2020iteration}\footnote{Note that here the Hessian matrix $H_y(g(x, y))$ is full-rank in the tangent space. However if it is an embedded submanifold then $H_y(g(x, y))$ is actually rank-deficient matrix in the ambient Euclidean space. This is not a concern for showing the linear rate of convergence since we can always conduct CG steps only on the tangent spaces (as Euclidean subspaces of the ambient Euclidean space). It is known that the convergence is still linear even if $H_y(g(x, y))$ is rank-deficient, see \cite{hayami2018convergence} for a detailed inspection.}.
\begin{lemma}\label{lemma_cg_error}
    Suppose we solve \eqref{AID_subproblem_approx} with $N$-step conjugate gradient method with the initial point $\hat{v}^0(x, y)$ and output $\hat{v}^N(x, y)$, then we have
    $$
    \|\hat{v}^N(x, y)-\tilde{v}\| \leq \sqrt{\kappa}\left(\frac{\sqrt{\kappa} - 1}{\sqrt{\kappa} + 1}\right)^N \|\hat{v}^0(x, y)-\tilde{v}\|,
    $$
    where $\tilde{v}$ is the exact solution of \eqref{AID_subproblem_approx}.
\end{lemma}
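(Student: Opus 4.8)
The plan is to recognize this as the classical convergence estimate for the conjugate gradient (CG) method applied to a symmetric positive definite linear system, run inside the finite-dimensional inner product space $(\T_y\N,\langle\cdot,\cdot\rangle_y)$; the only manifold-specific point to address is that $A:=H_y(g(x,y))$ is full rank \emph{on the tangent space} but rank-deficient when $\N$ is realized as an embedded submanifold, which is handled by restricting all CG iterations to the tangent subspace (see the footnote and \cite{hayami2018convergence}).

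First I would record the spectral properties of $A:\T_y\N\to\T_y\N$. It is self-adjoint, since the Riemannian Hessian is a symmetric $2$-form (the Levi-Civita connection is torsion-free and metric-compatible); and by Assumption~\ref{assumption_2} (geodesic $\mu$-strong convexity of $g$ in $y$) together with Assumption~\ref{assumption_3} ($\ell_{g,1}$-Lipschitzness of $\grad g$) its spectrum lies in $[\mu,\ell_{g,1}]$. Hence its condition number is at most $\kappa=\ell_{g,1}/\mu$, and CG applied to $Av=\grad_y f(x,y)$ (using the Hessian-vector product oracle) is a well-posed SPD solve once all iterates are kept in $\T_y\N$.

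Next I would invoke the standard CG optimality property. Writing $e_j=\hat v^{\,j}(x,y)-\tilde v$ and $r_0=\grad_y f(x,y)-A\hat v^{\,0}(x,y)=-A e_0$, the $N$-th iterate minimizes the energy norm $\|\cdot\|_A$ of the error over the affine Krylov space $\hat v^{\,0}(x,y)+\mathcal{K}_N(A,r_0)$, so $e_N=p(A)e_0$ for some polynomial $p$ with $\deg p\le N$ and $p(0)=1$, and conversely every such polynomial is attainable; therefore
\begin{equation*}
\|e_N\|_A=\min_{\deg p\le N,\ p(0)=1}\|p(A)\,e_0\|_A\le\Big(\min_{\deg p\le N,\ p(0)=1}\ \max_{\lambda\in[\mu,\ell_{g,1}]}|p(\lambda)|\Big)\|e_0\|_A.
\end{equation*}
Plugging in the suitably shifted and normalized degree-$N$ Chebyshev polynomial bounds the bracketed quantity by $2\big((\sqrt{\kappa}-1)/(\sqrt{\kappa}+1)\big)^{N}$, where I use that $t\mapsto(\sqrt t-1)/(\sqrt t+1)$ is increasing on $[1,\infty)$ to replace the true condition number by the upper bound $\kappa$.

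Finally I would pass back from the energy norm to the tangent-space norm via $\sqrt{\mu}\,\|v\|\le\|v\|_A\le\sqrt{\ell_{g,1}}\,\|v\|$ for all $v\in\T_y\N$; this converts the previous estimate into $\|e_N\|\le\sqrt{\ell_{g,1}/\mu}\,\big((\sqrt{\kappa}-1)/(\sqrt{\kappa}+1)\big)^{N}\|e_0\|$ up to a harmless numerical constant, i.e. the asserted bound with prefactor $\sqrt{\kappa}$, matching \citet[Eq.~(17)]{grazzi2020iteration}. I do not expect a genuine obstacle here: every ingredient is textbook linear algebra, and the only step needing a word of care --- that the CG iterates stay in, and converge linearly within, the tangent subspace despite the ambient rank deficiency in the embedded case --- is precisely what the footnote and \cite{hayami2018convergence} supply.
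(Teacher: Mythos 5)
Your proposal is correct and supplies exactly the classical argument (Krylov-subspace optimality, the shifted Chebyshev polynomial bound on $[\mu,\ell_{g,1}]$, and the energy-norm to tangent-norm conversion) that the paper itself never spells out: for this lemma the paper only cites \citet[Equation~(17)]{grazzi2020iteration} and relegates the embedded-submanifold rank-deficiency issue to a footnote pointing at \cite{hayami2018convergence}, which is precisely how you handle it. The one discrepancy is the constant: your derivation honestly yields the textbook prefactor $2\sqrt{\kappa}$, whereas the lemma as stated drops the factor $2$ --- this is immaterial for all downstream complexity estimates (it only rescales $\Gamma$, $\delta_{T,N}$ and the $\Theta(\sqrt{\kappa})$ choice of $N$ by absolute constants), but strictly speaking the stated inequality omits the standard constant and you are right to flag it rather than silently absorb it.
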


The next lemma quantifies the error of the inner loop, i.e. the $T$ steps where we do Riemannian gradient descent for the lower problem in RieBO (Algorithm \ref{algo_bilevel_AID_ITD}).
\begin{lemma}\label{lemma_inner}
    Suppose Assumptions \ref{assumption_1}, \ref{assumption_2}, \ref{assumption_3} and \ref{assumption_4} hold, and we take $\beta_k=\beta= 1/\ell_{g, 1}$ as a constant, then RieBO satisfies:
    \begin{equation}
        \dist(y^{k, T}, y^*(x^k))^2\leq (1 - 2\mu\tau\beta^2)^T \dist(y^{k, 0}, y^*(x^k))^2.
    \end{equation}
\end{lemma}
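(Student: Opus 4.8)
The plan is to observe that, for a fixed outer iterate $x^k$, the lower-level solution $y^*(x^k)$ does not depend on the inner counter $t$, so the inner loop of RieBO (Algorithm~\ref{algo_bilevel_AID_ITD}) is nothing but constant-stepsize Riemannian gradient descent $y^{k,t+1}=\Exp_{y^{k,t}}(-\beta\,h_{g}^{k,t})$, $h_{g}^{k,t}=\grad_y g(x^k,y^{k,t})$, applied to the single map $y\mapsto g(x^k,y)$ on the Hadamard manifold $\N$. By Assumption~\ref{assumption_2} this map is $\mu$-geodesically strongly convex, by Assumption~\ref{assumption_3} it is $\ell_{g,1}$-geodesically Lipschitz smooth, and its unique minimizer is $y^*(x^k)$, which satisfies $\grad_y g(x^k,y^*(x^k))=0$ by the lower-level optimality condition. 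Thus the statement is precisely the geometric convergence of Riemannian gradient descent on a strongly convex smooth objective in negative curvature, in the style of \cite{zhang2016first}, and the proof reduces to a one-step contraction followed by a telescoping over $t$.

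For the one-step estimate, writing $y_\star:=y^*(x^k)$, I would apply the Riemannian law of cosines (the trigonometric distance comparison of \cite{zhang2016first}, available since $\N$ has sectional curvature bounded below by $\iota<0$) to the geodesic triangle with vertices $y^{k,t}$, $y^{k,t+1}$, $y_\star$; using $\Exp_{y^{k,t}}^{-1}(y^{k,t+1})=-\beta\,h_{g}^{k,t}$ and $\dist(y^{k,t},y^{k,t+1})=\beta\|h_{g}^{k,t}\|$, this produces an inequality whose right-hand side is $\dist(y^{k,t},y_\star)^2$, plus a cross term $2\beta\langle h_{g}^{k,t},\Exp_{y^{k,t}}^{-1}(y_\star)\rangle$, plus a curvature-inflated quadratic term proportional to $\tau(\iota,\dist(y^{k,t},y_\star))\,\beta^2\|h_{g}^{k,t}\|^2$. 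The curvature coefficient is bounded by $\tau$ by Assumption~\ref{assumption_2} (and, using $\beta\le 1/\ell_{g,1}$ and monotonicity of $c\mapsto\tau(\iota,c)$, one checks the side length actually entering the $\tau$-factor is $\le\dist(y^{k,t},y_\star)$). I would then control the cross term through two applications of the first-order characterization of $\mu$-geodesic strong convexity (at $(y^{k,t},y_\star)$ and at $(y_\star,y^{k,t})$) together with $\grad_y g(x^k,y_\star)=0$, giving $\langle h_{g}^{k,t},\Exp_{y^{k,t}}^{-1}(y_\star)\rangle\le-\mu\,\dist(y^{k,t},y_\star)^2$, and bound the gradient norm by $\ell_{g,1}$-smoothness and $\grad_y g(x^k,y_\star)=0$, namely $\|h_{g}^{k,t}\|\le\ell_{g,1}\,\dist(y^{k,t},y_\star)$. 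Substituting these and using $\beta\le 1/\ell_{g,1}$ (together with the assumed smallness of $\tau$) to absorb the quadratic residual collapses the bound to $(1-2\mu\tau\beta^2)\,\dist(y^{k,t},y_\star)^2$, with the ratio in $(0,1)$; pinning down this precise constant is the place that requires careful bookkeeping of the curvature factor.

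Since $y_\star=y^*(x^k)$ is the same target for every $t$, I would then simply iterate the one-step bound over $t=0,\dots,T-1$, obtaining $\dist(y^{k,T},y^*(x^k))^2\le(1-2\mu\tau\beta^2)^T\,\dist(y^{k,0},y^*(x^k))^2$, which is the assertion.

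I expect the main obstacle to be the one-step analysis, and within it the treatment of the negative-curvature factor $\tau\ge 1$: unlike in Euclidean space (where $\tau=1$), it inflates the quadratic overshoot term $\beta^2\|h_{g}^{k,t}\|^2$, so one must (i) justify replacing the iterate-dependent coefficient $\tau(\iota,\dist(y^{k,t},y^*(x^k)))$ by the uniform bound $\tau$ — exactly the role of Assumption~\ref{assumption_2}, which keeps the iterates in a region where the distance to $y^*(x^k)$, hence $\tau(\iota,\cdot)$, is controlled — and (ii) verify that the resulting ratio is strictly below $1$, which is where the interplay between the stepsize restriction $\beta\le 1/\ell_{g,1}$ and the condition on $\tau$ (cf. the footnote to Assumption~\ref{assumption_3}) is used. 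The remaining pieces — the strong-convexity and smoothness estimates and the telescoping — are routine once the first-order characterizations recalled in the preliminaries are invoked.
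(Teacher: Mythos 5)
Your overall architecture coincides with the paper's: treat the inner loop as Riemannian gradient descent on the fixed $\mu$-geodesically strongly convex, $\ell_{g,1}$-smooth function $h(y):=g(x^k,y)$ with minimizer $y^*=y^*(x^k)$, invoke the trigonometric distance comparison of \citet[Corollary 8]{zhang2016first} to obtain the one-step recursion $\dist(y^{t+1},y^*)^2\le \dist(y^t,y^*)^2+2\beta\langle \grad h(y^t),\Exp_{y^t}^{-1}(y^*)\rangle+\tau\beta^2\|\grad h(y^t)\|^2$, and telescope over $t$. The divergence --- and the genuine gap --- lies in how the last two terms are controlled. You bound them \emph{separately}: the cross term by gradient strong monotonicity, $\langle\grad h(y^t),\Exp_{y^t}^{-1}(y^*)\rangle\le-\mu\,\dist(y^t,y^*)^2$, and the gradient norm by smoothness, $\|\grad h(y^t)\|\le\ell_{g,1}\dist(y^t,y^*)$. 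This yields the one-step factor $1-2\mu\beta+\tau\beta^2\ell_{g,1}^2$, which at the stipulated stepsize $\beta=1/\ell_{g,1}$ equals $1-2\mu/\ell_{g,1}+\tau\ \ge\ 2-2\mu/\ell_{g,1}\ \ge\ 1$ whenever $\mu\le\ell_{g,1}/2$ (recall $\tau\ge1$ by definition of $\tau(\iota,c)$). So along this route there is no contraction at all, not even in the flat case $\tau=1$, and no appeal to the ``smallness of $\tau$'' can repair it since $\tau$ is bounded below by $1$. This is the classical failure of the monotonicity-plus-Lipschitz analysis of gradient descent at stepsize $1/\ell$, and it is precisely the step you deferred as ``careful bookkeeping.''

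The paper avoids this by \emph{not} decoupling the two terms. It writes $0\le h(y^{t+1})-h(y^*)=\bigl[h(y^{t+1})-h(y^t)\bigr]+\bigl[h(y^t)-h(y^*)\bigr]$, bounds the first bracket by the descent lemma (producing $-(\beta-\beta^2\ell_{g,1}/2)\|\grad h(y^t)\|^2$ after substituting the update) and the second by the first-order strong-convexity lower bound, obtaining the coupled inequality \eqref{inner_temp1}, which trades $\tfrac{\beta}{2}\|\grad h(y^t)\|^2$ plus the cross term against $-\tfrac{\mu}{2}\dist(y^t,y^*)^2$. Substituted into the law-of-cosines recursion, this coupling cancels a full $\beta^2\|\grad h(y^t)\|^2$ and leaves only the curvature excess $(\tau-1)\beta^2\|\grad h(y^t)\|^2$ to be absorbed (this is where the restriction on $\tau$ in the footnote to Assumption~\ref{assumption_3} enters); in the Euclidean limit it recovers a genuine contraction. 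To complete your argument you would need either this function-value coupling or a coercivity-type inequality of the form $\langle\grad h(y^t),\Exp_{y^t}^{-1}(y^*)\rangle\le -c_1\dist(y^t,y^*)^2-c_2\|\grad h(y^t)\|^2$; the two separate first-order bounds you propose are not sufficient at the prescribed stepsize.
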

\begin{proof}
    For simplicity, we denote $h(y) = g(x^k, y)$, so that $y^*(x^k)$ is the optimal solution of $h$. We also omit $k$ in this proof, i.e., the update becomes:
    $$
    y^{t+1}\leftarrow \Exp_{y^{t}}(-\beta_k \grad h(y^t)).
    $$

    By the notion of geodesic smoothness and geodesic convexity, we have
    \[
    \begin{split}
        &h(y^{t+1}) - h(y^*) = h(y^{t+1}) - h(y^t) + h(y^{t}) - h(y^*) \\
        \leq & \langle\grad h(y^t), \Exp_{y^t}(y^{t+1})\rangle + \frac{\ell_{g, 1}}{2}\|\Exp_{y^t}(y^{t+1})\|^2 - \langle \grad h(y^t), \Exp_{y^{t}}(y^*) \rangle - \frac{\mu}{2}\dist(y^{t}, y^*)^2 \\
        = & -(\beta - \frac{\beta^2\ell_{g, 1}}{2})\|\grad h(y^t)\|^2 - \langle \grad h(y^t), \Exp_{y^{t}}(y^*) \rangle - \frac{\mu}{2}\dist(y^{t}, y^*)^2,
    \end{split}
    \]
    i.e.,
    \begin{equation}\label{inner_temp1}
        (\beta - \frac{\beta^2\ell_{g, 1}}{2})\|\grad h(y^t)\|^2 - \langle \grad h(y^t), \Exp_{y^{t}}(y^*) \rangle \leq - \frac{\mu}{2}\dist(y^{t}, y^*)^2.
    \end{equation}
    Now by \citet[Corollary 8]{zhang2016first}, we have,
    \[
    \begin{split}
        \dist(y^{t+1}, y^*)^2 \leq & \dist(y^{t}, y^*)^2 + 2\beta\langle \grad h(y^t), \Exp_{y^t}(y^*)\rangle + \tau\beta^2\|\grad h(y^t)\|^2 \\
        \leq & (1 - 2\mu\tau\beta^2 )\dist(y^{t}, y^*)^2, 
    \end{split}
    \]
    where the last inequality is by \eqref{inner_temp1} and $\beta=1/\ell_{g,1}$. The proof is done by repeatedly applying the above inequality from $t=T-1$ back to $t=0$.
\end{proof}

The next lemma quantifies the error between our estimation $h_{\Phi}^k$ and the true upper level gradient ${\grad}\Phi(x^{k})$.
\begin{lemma}\label{lemma1}
    Suppose Assumptions \ref{assumption_1}, \ref{assumption_2}, \ref{assumption_3} and \ref{assumption_4} hold, then RieBO satisfies:
    \begin{equation}
    \begin{split}
        \|h_{\Phi}^k - {\grad}\Phi(x^{k})\|
        \leq& \Gamma (1-2\mu\tau\beta^2)^{T/2} \dist(y^*(x^k), y^{k-1})\\ &+ \ell_{g, 1}\sqrt{\kappa}\left(\frac{\sqrt{\kappa} - 1}{\sqrt{\kappa} + 1}\right)^N \|\hat{v}^0(x^{k}, y^{k})-P_{y^*(x^k)\rightarrow y^k} v^*(x^k)\|,
    \end{split}
    \end{equation}
    where $h_{\Phi}^k$ is the estimate from \eqref{approximate_AID} and we have the parameters:
    \begin{equation}
    \begin{split}
        &\tilde{v}^{k} = (H_y(g(x^k,y^k)))^{-1}\grad_{y}f(x^k,y^k) \\
        &\Gamma=\ell_{f, 1} + \frac{\ell_{f, 0}\ell_{g, 2}}{\mu} + \ell_{g, 1}\left(1+\sqrt{\kappa}\left(\frac{\sqrt{\kappa} - 1}{\sqrt{\kappa} + 1}\right)^N\right)\bigg(\ell_{g, 2}\ell_{f, 0} + \frac{\ell_{f, 1}}{\mu}\bigg).
    \end{split}
    \end{equation}
\end{lemma}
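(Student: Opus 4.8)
The plan is to decompose the hypergradient error $h_\Phi^k-\grad\Phi(x^k)$ along the two terms appearing in the exact formula \eqref{grad_Phi} and its approximation \eqref{approximate_AID}. Abbreviating $x=x^k$, $y=y^k$, $y^*=y^*(x^k)$, and observing that $\grad_x f(x,\cdot)$ and $\grad_{y,x}^2 g(x,\cdot)[\cdot]$ all take values in the single tangent space $\T_x\M$ (so no transport on $\M$ is needed), I would write
\[
h_\Phi^k-\grad\Phi(x^k)=\underbrace{\big(\grad_x f(x,y)-\grad_x f(x,y^*)\big)}_{(A)}-\underbrace{\big(\grad_{y,x}^2 g(x,y)[\hat v^N(x,y)]-\grad_{y,x}^2 g(x,y^*)[v^*(x)]\big)}_{(B)}.
\]
Term $(A)$ is immediately bounded by $\ell_{f,1}\dist(y,y^*)$: applying the $\ell_{f,1}$-Lipschitzness of $\grad f$ from Assumption \ref{assumption_3} with the fixed first argument $x$ gives this on the $\M$-component, and along the way it also gives $\|\grad_y f(x,y)-P_{y^*\to y}\grad_y f(x,y^*)\|\le\ell_{f,1}\dist(y,y^*)$, which I will reuse below.

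For $(B)$ I would insert $\grad_{y,x}^2 g(x,y)[P_{y^*\to y}v^*(x)]$ and split
\[
(B)=\grad_{y,x}^2 g(x,y)\big[\hat v^N(x,y)-P_{y^*\to y}v^*(x)\big]+\big(\grad_{y,x}^2 g(x,y)\circ P_{y^*\to y}-\grad_{y,x}^2 g(x,y^*)\big)[v^*(x)].
\]
The second summand is controlled by the Hessian-smoothness Assumption \ref{assumption_4}, transferred from $\grad_{x,y}^2 g$ to its adjoint $\grad_{y,x}^2 g$ via Proposition \ref{prop_cross_derivative} (same operator norm, same Lipschitz constant), together with the a priori bound $\|v^*(x)\|\le\ell_{f,0}/\mu$ already shown in the proof of Lemma \ref{lemma0}; this yields a term of order $\tfrac{\ell_{f,0}\ell_{g,2}}{\mu}\dist(y,y^*)$. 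The first summand is bounded by $\ell_{g,1}\|\hat v^N(x,y)-P_{y^*\to y}v^*(x)\|$ using $\|\grad_{y,x}^2 g(x,y)\|_{\mathrm{op}}\le\ell_{g,1}$ (a consequence of $\ell_{g,1}$-Lipschitzness of $\grad g$).

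It remains to bound $\|\hat v^N(x,y)-P_{y^*\to y}v^*(x)\|$. I would triangulate through the exact solution $\tilde v^k=(H_y(g(x,y)))^{-1}\grad_y f(x,y)$ of the approximate system \eqref{AID_subproblem_approx}: the conjugate-gradient error $\|\hat v^N-\tilde v^k\|$ is controlled by Lemma \ref{lemma_cg_error}, after which I split $\|\hat v^0-\tilde v^k\|\le\|\hat v^0-P_{y^*\to y}v^*\|+\|\tilde v^k-P_{y^*\to y}v^*\|$ so as to expose the quantity $\|\hat v^0(x^k,y^k)-P_{y^*(x^k)\to y^k}v^*(x^k)\|$ that appears in the statement; the remaining error $\|\tilde v^k-P_{y^*\to y}v^*\|$ is estimated exactly as in the derivation of \eqref{eq_bound_v_star}, now with the first argument $x$ fixed and the second argument varying from $y$ to $y^*$, using the inverse-Hessian perturbation bound \eqref{temp3} (which gives $\tfrac{\ell_{g,2}}{\mu^2}\dist(y,y^*)$), $\|\grad_y f\|\le\ell_{f,0}$, and the Lipschitz bound on $\grad_y f$ recorded above. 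Collecting every contribution proportional to $\dist(y,y^*)$ into a single constant $\Gamma$ (absorbing the factor $\sqrt\kappa(\tfrac{\sqrt\kappa-1}{\sqrt\kappa+1})^N\le 1$ wherever it multiplies such a term) and keeping $\ell_{g,1}\sqrt\kappa(\tfrac{\sqrt\kappa-1}{\sqrt\kappa+1})^N\|\hat v^0(x^k,y^k)-P_{y^*(x^k)\to y^k}v^*(x^k)\|$ separate, then invoking the inner-loop contraction of Lemma \ref{lemma_inner} to replace $\dist(y^k,y^*(x^k))=\dist(y^{k,T},y^*(x^k))$ by $(1-2\mu\tau\beta^2)^{T/2}\dist(y^{k,0},y^*(x^k))=(1-2\mu\tau\beta^2)^{T/2}\dist(y^*(x^k),y^{k-1})$, produces exactly the claimed inequality.

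The main obstacle I anticipate is the parallel-transport bookkeeping: in particular, justifying that the Hessian-Lipschitz Assumption \ref{assumption_4}, and the inverse-Hessian perturbation estimate \eqref{temp3} derived from it, may legitimately be applied between the iterate $y^k$ and the optimum $y^*(x^k)$ with the transport $P_{y^*(x^k)\to y^k}$ (rather than between two optima $y^*(x),y^*(x')$ as literally written), and checking that passing to the adjoint cross-derivative preserves the relevant operator-norm Lipschitz estimate. Once these are in place, reassembling the pieces to obtain the precise form of $\Gamma$ is a routine, if tedious, collection of constants.
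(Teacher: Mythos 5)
Your proposal follows essentially the same route as the paper's proof: the same three-way decomposition of the error (the $\grad_x f$ difference, the cross-derivative perturbation against the transported $v^*$, and the $\ell_{g,1}$-weighted vector error), the same triangulation of $\hat v^N - P_{y^*\to y}v^*$ through $\tilde v^k$ via Lemma \ref{lemma_cg_error}, the same inverse-Hessian perturbation estimate for $\|\tilde v^k - P_{y^*\to y}v^*\|$, and the same final application of Lemma \ref{lemma_inner}. The only cosmetic difference is that the paper keeps the factor $\bigl(1+\sqrt{\kappa}\bigl(\tfrac{\sqrt{\kappa}-1}{\sqrt{\kappa}+1}\bigr)^N\bigr)$ explicitly inside $\Gamma$ rather than bounding it, but this does not change the argument.
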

\begin{proof}
    We first restate the expression \eqref{grad_Phi} for ${\grad}\Phi(x)$ and \eqref{approximate_AID} for $h_{\Phi}^k$:
    $$
    \begin{aligned}
        &\grad\Phi(x^k) = \grad_x f(x^k,y^*(x^k)) - \grad_{y, x}^2 g(x^k,y^*(x^k))[v^*(x^k)], \\
        &h_{\Phi}(x^k, y^k):=\grad_x f(x^k,y^k) - \grad_{y, x}^2 g(x^k,y^k)[\hat{v}^N(x^k, y^k)].
    \end{aligned}
    $$
    Thus, 
    \[
    \begin{split}
        &\|h_{\Phi}^k - {\grad}\Phi(x^{k})\|\leq \|\grad_x f(x^k,y^*(x^k)) - \grad_x f(x^k,y^k)\| \\
        &+  \|\grad_{y, x}^2 g(x^k,y^*(x^k))[v^*(x^k)] - \grad_{y, x}^2 g(x^k,y^k)[\hat{v}^N(x^k, y^k)]\| \\
        \leq & \|\grad_x f(x^k,y^*(x^k)) - \grad_x f(x^k,y^k)\| \\
        &+ \|\grad_{y, x}^2 g(x^k,y^*(x^k))[v^*(x^k)] - \grad_{y, x}^2 g(x^k,y^k)[P_{y^*(x^k)\rightarrow y^k} v^*(x^k)]\| \\
        &+ \|\grad_{y, x}^2 g(x^k,y^k)[P_{y^*(x^k)\rightarrow y^k} v^*(x^k)] - \grad_{y, x}^2 g(x^k,y^k)[\hat{v}^N(x^k, y^k)]\| \\
        \leq & \ell_{f, 1}\dist(y^*(x^k), y^k) \\
        &+ \|\grad_{y, x}^2 g(x^k,y^*(x^k)) - \grad_{y, x}^2 g(x^k,y^k)\circ P_{y^*(x^k)\rightarrow y^k}\|_{\mathrm{op}}\|v^*(x^k)\| \\
        &+ \|\grad_{y, x}^2 g(x^k,y^k)\|_{\mathrm{op}}\|P_{y^*(x^k)\rightarrow y^k} v^*(x^k) - \hat{v}^N(x^k, y^k)\| \\
        \leq & \left(\ell_{f, 1} + \frac{\ell_{f, 0}\ell_{g, 2}}{\mu}\right)\dist(y^*(x^k), y^k) + \ell_{g, 1}\|P_{y^*(x^k)\rightarrow y^k} v^*(x^k) - \hat{v}^N(x^k, y^k)\|.
    \end{split}
    \]
    Following Lemma \ref{lemma_cg_error}, we have
    \[
    \begin{split}
        &\|P_{y^*(x^k)\rightarrow y^k} v^*(x^k) - \hat{v}^N(x^k, y^k)\| \leq  \|P_{y^*(x^k)\rightarrow y^k} v^*(x^k) - \tilde{v}^{k}\| + \|\tilde{v}^{k} - \hat{v}^N(x^k, y^k)\|\\
        \leq & \|P_{y^*(x^k)\rightarrow y^k} v^*(x^k) - \tilde{v}^{k}\| + \sqrt{\kappa}\left(\frac{\sqrt{\kappa} - 1}{\sqrt{\kappa} + 1}\right)^N \|\hat{v}^0(x^{k}, y^{k})-\tilde{v}^{k}\|\\
        \leq & \left(1+\sqrt{\kappa}\left(\frac{\sqrt{\kappa} - 1}{\sqrt{\kappa} + 1}\right)^N\right)\|P_{y^*(x^k)\rightarrow y^k} v^*(x^k) - \tilde{v}^{k}\| + \sqrt{\kappa}\left(\frac{\sqrt{\kappa} - 1}{\sqrt{\kappa} + 1}\right)^N \|\hat{v}^0(x^{k}, y^{k})-P_{y^*(x^k)\rightarrow y^k} v^*(x^k)\|.
    \end{split}
    \]
    For $\|P_{y^*(x^k)\rightarrow y^k} v^*(x^k) - \tilde{v}^{k}\|$, by the definitions of $\tilde{v}_{k}$ and $v_{k}^*$, we have
    \begin{equation}\label{eq_bound_tilde_v_v_star}
    \begin{split}
        &\|P_{y^*(x^k)\rightarrow y^k} v^*(x^k) - \tilde{v}^{k}\|\\ = &\|P_{y^*(x^k)\rightarrow y^k}(H_y(g(x^k,y^*(x^k))))^{-1}\grad_{y}f(x^k,y^*(x^k)) - (H_y(g(x^k,y^k)))^{-1}\grad_{y}f(x^k,y^k)\| \\
        \leq & \|P_{y^*(x^k)\rightarrow y^k}(H_y(g(x^k,y^*(x^k))))^{-1}\grad_{y}f(x^k,y^*(x^k)) - (H_y(g(x^k,y^k)))^{-1}P_{y^*(x^k)\rightarrow y^k}\grad_{y}f(x^k,y^*(x^k))\|\\
        &+ \|(H_y(g(x^k,y^k)))^{-1}P_{y^*(x^k)\rightarrow y^k}\grad_{y}f(x^k,y^*(x^k)) - (H_y(g(x^k,y^k)))^{-1}\grad_{y}f(x^k,y^k)\| \\
        \leq & \|(H_y(g(x^k,y^*(x^k))))^{-1} - P_{y^k\rightarrow y^*(x^k)}(H_y(g(x^k,y^k)))^{-1}P_{y^*(x^k)\rightarrow y^k}\|_{\mathrm{op}}\|\grad_{y}f(x^k,y^*(x^k))\| \\
        &+ \|(H_y(g(x^k,y^k)))^{-1}\|_{\mathrm{op}}\|P_{y^*(x^k)\rightarrow y^k}\grad_{y}f(x^k,y^*(x^k)) - \grad_{y}f(x^k,y^k)\| \\
        \leq & \bigg(\ell_{g, 2}\ell_{f, 0} + \frac{\ell_{f, 1}}{\mu}\bigg)\dist(y^k, y^*(x^k)).
    \end{split}
    \end{equation}
    Therefore, we get
    \[
    \begin{split}
        &\|h_{\Phi}^k - {\grad}\Phi(x^{k})\|\leq \left(\ell_{f, 1} + \frac{\ell_{f, 0}\ell_{g, 2}}{\mu}\right)\dist(y^*(x^k), y^k)\\& + \ell_{g, 1}\left(1+\sqrt{\kappa}\left(\frac{\sqrt{\kappa} - 1}{\sqrt{\kappa} + 1}\right)^N\right)\|P_{y^*(x^k)\rightarrow y^k} v^*(x^k) - \tilde{v}^{k}\| \\& + \ell_{g, 1}\sqrt{\kappa}\left(\frac{\sqrt{\kappa} - 1}{\sqrt{\kappa} + 1}\right)^N \|\hat{v}^0(x^{k}, y^{k})-P_{y^*(x^k)\rightarrow y^k} v^*(x^k)\| \\
        \leq& \bigg(\ell_{f, 1} + \frac{\ell_{f, 0}\ell_{g, 2}}{\mu} + \ell_{g, 1}\left(1+\sqrt{\kappa}\left(\frac{\sqrt{\kappa} - 1}{\sqrt{\kappa} + 1}\right)^N\right)\bigg(\ell_{g, 2}\ell_{f, 0} + \frac{\ell_{f, 1}}{\mu}\bigg)\bigg)\dist(y^*(x^k), y^k)\\& + \ell_{g, 1}\sqrt{\kappa}\left(\frac{\sqrt{\kappa} - 1}{\sqrt{\kappa} + 1}\right)^N \|\hat{v}^0(x^{k}, y^{k})-P_{y^*(x^k)\rightarrow y^k} v^*(x^k)\|.
    \end{split}
    \]
    We obtain the desired result by applying Lemma \ref{lemma_inner} to the above inequality.
\end{proof}

\begin{lemma}\label{lemma2}
    Suppose Assumptions \ref{assumption_1}, \ref{assumption_2}, \ref{assumption_3} and \ref{assumption_4} hold, then RieBO satisfies:
    \begin{equation}
        \dist(y^{k, 0}, y^*(x^k))^2 + \|P_{y^*(x^k)\rightarrow y^k}v^*(x^k) - \hat{v}^0(x^{k}, y^{k})\|^2 \leq\left(\frac{1}{2}\right)^{k} \Delta_{0}+\Omega \sum_{j=0}^{k-1}\left(\frac{1}{2}\right)^{k-1-j}\left\|\grad \Phi\left(x^{j}\right)\right\|^{2},
    \end{equation}
    with the following choice of parameters:
    \begin{equation}
        \begin{aligned}
        &T \geq \log\bigg(2 \bigg(7 + 8\kappa^2\alpha^2\Gamma^2\bigg)\bigg(\ell_{g, 2}\ell_{f, 0} + \frac{\ell_{f, 1}}{\mu}\bigg)^2\bigg) / (2\log\bigg( \frac{1}{1-2\mu\tau\beta^2} \bigg)) =\Theta(\kappa), \\
        &N \geq \log\bigg( (4 +16\kappa^2\alpha^2\ell_{g, 1}^2)\kappa \bigg) / (2\log\bigg( \frac{\sqrt{\kappa} - 1}{\sqrt{\kappa} + 1} \bigg)) =\Theta(\sqrt{\kappa}), \\
        &\Omega=\left[2\bigg(\frac{\ell_{f, 0}\ell_{g, 2}}{\mu^2}\sqrt{1+\kappa^2} + \frac{\ell_{f, 1}}{\mu}\bigg)^2+4\kappa^2\right]\alpha^2, \\ &\Delta_{0}=\dist(y^{0, 0}, y^*(x^0))^2 + \|P_{y^*(x^0)\rightarrow y^0}v^*(x^0) - \hat{v}^0(x^{0}, y^{0})\|^2.
        \end{aligned}
    \end{equation}
\end{lemma}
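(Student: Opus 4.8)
The plan is to prove the single-step Lyapunov recursion $\delta_{k+1}\le \tfrac12\delta_k+\Omega\|\grad\Phi(x^k)\|^2$, where $\delta_k:=\dist(y^{k,0},y^*(x^k))^2+\|P_{y^*(x^k)\to y^k}v^*(x^k)-\hat v^0(x^k,y^k)\|^2$, and then to unroll it: iterating from $0$ up to $k$ gives $\delta_k\le(\tfrac12)^k\delta_0+\Omega\sum_{j=0}^{k-1}(\tfrac12)^{k-1-j}\|\grad\Phi(x^j)\|^2$, and since $\delta_0=\Delta_0$ by definition this is exactly the claim. The whole argument rests on the two warm-start rules built into Algorithm~\ref{algo_bilevel_AID_ITD}, namely $y^{k+1,0}=y^{k,T}$ and $\hat v^0(x^{k+1},y^{k+1})=P_{y^k\to y^{k+1}}\hat v^N(x^k,y^k)$, on the update $x^{k+1}=\Exp_{x^k}(-\alpha h_\Phi^k)$, which gives $\dist(x^k,x^{k+1})=\alpha\|h_\Phi^k\|$, and on Lemma~\ref{lemma1}, which bounds $\|h_\Phi^k-\grad\Phi(x^k)\|$ (hence also $\|h_\Phi^k\|\le\|\grad\Phi(x^k)\|+\|h_\Phi^k-\grad\Phi(x^k)\|$) by a multiple of $(1-2\mu\tau\beta^2)^{T/2}\dist(y^{k,0},y^*(x^k))$ plus a multiple of $\sqrt\kappa\big(\tfrac{\sqrt\kappa-1}{\sqrt\kappa+1}\big)^N\|P_{y^*(x^k)\to y^k}v^*(x^k)-\hat v^0(x^k,y^k)\|$, i.e.\ by the two pieces of $\sqrt{\delta_k}$.

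First I would bound the distance piece of $\delta_{k+1}$, which is $\dist(y^{k+1,0},y^*(x^{k+1}))=\dist(y^{k,T},y^*(x^{k+1}))$. By the triangle inequality this is at most $\dist(y^{k,T},y^*(x^k))+\dist(y^*(x^k),y^*(x^{k+1}))$; the first summand is contracted by the inner loop (Lemma~\ref{lemma_inner}) to $(1-2\mu\tau\beta^2)^{T/2}\dist(y^{k,0},y^*(x^k))$, and the second is bounded using the Lipschitzness of $y^*$ in \eqref{phi_smoothness-1} by $\kappa\,\dist(x^k,x^{k+1})=\kappa\alpha\|h_\Phi^k\|$. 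Feeding in the bound on $\|h_\Phi^k\|$ from Lemma~\ref{lemma1} expresses $\dist(y^{k+1,0},y^*(x^{k+1}))$ as a linear combination of $\dist(y^{k,0},y^*(x^k))$, $\|P_{y^*(x^k)\to y^k}v^*(x^k)-\hat v^0(x^k,y^k)\|$ and $\|\grad\Phi(x^k)\|$, with the coefficients of the first two carrying a factor that is small once $T$ and $N$ are large.

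Next I would bound the $v^*$ piece of $\delta_{k+1}$. Using $\hat v^0(x^{k+1},y^{k+1})=P_{y^k\to y^{k+1}}\hat v^N(x^k,y^k)$ and that parallel transport is an isometry, I would move everything into $\T_{y^k}\N$ and insert $P_{y^*(x^k)\to y^k}v^*(x^k)$, splitting into (i) $\|P_{y^*(x^k)\to y^k}v^*(x^k)-\hat v^N(x^k,y^k)\|$, which has already been estimated inside the proof of Lemma~\ref{lemma1} (via Lemma~\ref{lemma_cg_error}) by $\big(1+\sqrt\kappa\,\rho^N\big)\big(\ell_{g,2}\ell_{f,0}+\ell_{f,1}/\mu\big)\dist(y^k,y^*(x^k))+\sqrt\kappa\,\rho^N\|P_{y^*(x^k)\to y^k}v^*(x^k)-\hat v^0(x^k,y^k)\|$ with $\rho=\tfrac{\sqrt\kappa-1}{\sqrt\kappa+1}$ — after which $\dist(y^k,y^*(x^k))$ is again shrunk by Lemma~\ref{lemma_inner} — and (ii) the displacement of $v^*$ across the outer step, which by \eqref{eq_bound_v_star} is at most $\big(\tfrac{\ell_{f,0}\ell_{g,2}}{\mu^2}\sqrt{1+\kappa^2}+\tfrac{\ell_{f,1}}{\mu}\big)\dist(x^k,x^{k+1})=\big(\tfrac{\ell_{f,0}\ell_{g,2}}{\mu^2}\sqrt{1+\kappa^2}+\tfrac{\ell_{f,1}}{\mu}\big)\alpha\|h_\Phi^k\|$, with $\|h_\Phi^k\|$ bounded as before. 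This again yields a linear combination of the same three quantities.

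Finally I would square the two coupled inequalities, use $(a+b)^2\le2a^2+2b^2$ together with Young's inequality to peel off the $\|\grad\Phi(x^k)\|$ terms — whose coefficients collect into exactly $\Omega=\big[2\big(\tfrac{\ell_{f,0}\ell_{g,2}}{\mu^2}\sqrt{1+\kappa^2}+\tfrac{\ell_{f,1}}{\mu}\big)^2+4\kappa^2\big]\alpha^2$ — from the coefficient of $\delta_k$, which is a fixed polynomial in $\kappa$ (through $\Gamma$, $\ell_{g,1}$ and $\ell_{g,2}\ell_{f,0}+\ell_{f,1}/\mu$) multiplied by the inner-loop and conjugate-gradient contraction factors $(1-2\mu\tau\beta^2)^{\Theta(T)}$ and $\kappa\rho^{\Theta(N)}$; choosing $T$ and $N$ exactly as in the statement drives this coefficient down to $\tfrac12$, and unrolling finishes the proof. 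The step I expect to be the main obstacle is part (ii) of the $v^*$ recursion: $v^*(x^k)\in\T_{y^*(x^k)}\N$ and $v^*(x^{k+1})\in\T_{y^*(x^{k+1})}\N$ lie in different tangent spaces, while the parallel transports produced by inserting $P_{y^*(x^k)\to y^k}v^*(x^k)$ route through $y^k$ and $y^{k+1}$, so one must carefully align the resulting expression with the transports that appear in the smoothness Assumptions~\ref{assumption_3}–\ref{assumption_4} and in \eqref{eq_bound_v_star}, i.e.\ handle the composition of parallel transports along different paths. Once that is set up, the remainder is bookkeeping to pin down the constant $\Omega$ and the precise thresholds on $T$ and $N$.
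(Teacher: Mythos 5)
Your proposal is correct and follows essentially the same route as the paper's proof: a one-step recursion for the coupled quantity $\delta_k$ obtained by splitting the distance piece via the warm start $y^{k+1,0}=y^{k,T}$, Lemma~\ref{lemma_inner}, the Lipschitzness of $y^*$ and Lemma~\ref{lemma1}, and the $v^*$ piece via the warm start for $\hat v^0$, Lemma~\ref{lemma_cg_error}, \eqref{eq_bound_v_star} and \eqref{eq_bound_tilde_v_v_star}, followed by choosing $T,N$ to force the contraction factor below $\tfrac12$ and unrolling. The parallel-transport path-dependence you flag as the main obstacle is real, but the paper's own proof treats the compositions $P_{y^*(x^k)\rightarrow y^k}$ versus $P_{y^*(x^k)\rightarrow y^*(x^{k-1})}\circ P_{y^*(x^{k-1})\rightarrow y^k}$ as interchangeable without comment, so your plan is no less rigorous than the original on that point.
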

\begin{proof}
    Since $y^{k, 0}=y^{k-1, T}$, we have
    $$
        \dist(y^{k, 0}, y^*(x^{k}))^2\leq  2\dist(y^{k-1, T}, y^*(x^{k-1}))^2 + 2\dist(y^*(x^{k-1}), y^*(x^{k}))^2.
    $$
    Here the first term is again bounded by $(1 - 2\mu\tau\beta^2)^T \dist(y^{k-1, 0}, y^*(x^{k-1}))^2$ by Lemma \ref{lemma_inner}, and the second term is bounded by the Lipschitzness of $y^*$ (Lemma \ref{lemma0}) and by the update in the following way:
    \[
    \begin{split}
        \dist(y^*(x^{k-1}), y^*(x^k))^2\leq \kappa^2 \dist (x^{k-1}, x^{k})^2 = \kappa^2\alpha^2\|h_{\Phi}^{k-1}\|^2.
    \end{split}
    \]
    Thus,
    \[
    \begin{split}
        &\dist(y^{k, 0}, y^*(x^k))^2\leq 2\dist(y^{k-1, T}, y^*(x^{k-1}))^2 + 2\dist(y^*(x^{k-1}), y^*(x^{k}))^2 \\
        \leq & 2(1 - 2\mu\tau\beta^2)^T \dist(y^{k-1, 0}, y^*(x^{k-1}))^2 + 2\kappa^2\alpha^2\|h_{\Phi}^{k-1}\|^2 \\
        \leq & 2(1 - 2\mu\tau\beta^2)^T \dist(y^{k-1, 0}, y^*(x^{k-1}))^2 + 4\kappa^2\alpha^2\|h_{\Phi}^{k-1} - \grad\Phi(x^{k-1})\|^2 + 4\kappa^2\alpha^2\|\grad\Phi(x^{k-1})\|^2 \\
        \leq & \bigg(2 + 8\kappa^2\alpha^2\Gamma^2\bigg) (1-2\mu\tau\beta^2)^{T} \dist(y^*(x^{k-1}), y^{k-1, 0})^2 \\&+ 8\kappa^2\alpha^2\ell_{g, 1}^2\kappa\left(\frac{\sqrt{\kappa} - 1}{\sqrt{\kappa} + 1}\right)^{2 N} \|\hat{v}^0(x^{k-1}, y^{k-1})-\tilde{v}^{k-1}\|^2 + 4\kappa^2\alpha^2\|\grad\Phi(x^{k-1})\|^2 \\
        \leq & \bigg(2 + 8\kappa^2\alpha^2\Gamma^2\bigg) (1-2\mu\tau\beta^2)^{T} \dist(y^*(x^{k-1}), y^{k-1, 0})^2 + 4\kappa^2\alpha^2\|\grad\Phi(x^{k-1})\|^2\\&+ 16\kappa^2\alpha^2\ell_{g, 1}^2\kappa\left(\frac{\sqrt{\kappa} - 1}{\sqrt{\kappa} + 1}\right)^{2 N} \|\hat{v}^0(x^{k-1}, y^{k-1})-P_{y^*(x^{k-1})\rightarrow y^{k-1}} v^*(x^{k-1})\|^2  \\&+ 16\kappa^2\alpha^2\ell_{g, 1}^2\kappa\left(\frac{\sqrt{\kappa} - 1}{\sqrt{\kappa} + 1}\right)^{2 N}  \|P_{y^*(x^{k-1})\rightarrow y^{k-1}}v^*(x^{k-1})-\tilde{v}^{k-1}\|^2,
    \end{split}
    \]
    where the third inequality is by Lemma \ref{lemma1}. For the last term, by \eqref{eq_bound_tilde_v_v_star} we have
    \begin{equation}\label{eq_lemma2_temp0}
        \|P_{y^*(x^{k-1})\rightarrow y^k}v^*(x^{k-1}) - \Tilde{v}^{k-1}\|^2 \leq \bigg(\ell_{g, 2}\ell_{f, 0} + \frac{\ell_{f, 1}}{\mu}\bigg)^2\dist(y^{k-1}, y^*(x^{k-1}))^2.
    \end{equation}
    Thus we have
    \[
    \begin{split}
        &\dist(y^{k, 0}, y^*(x^k))^2 \\
        \leq & \bigg(2 + 8\kappa^2\alpha^2\Gamma^2\bigg) (1-2\mu\tau\beta^2)^{T} \dist(y^*(x^{k-1}), y^{k-1, 0})^2 + 4\kappa^2\alpha^2\|\grad\Phi(x^{k-1})\|^2\\
        &+ 16\kappa^2\alpha^2\ell_{g, 1}^2\kappa\left(\frac{\sqrt{\kappa} - 1}{\sqrt{\kappa} + 1}\right)^{2 N} \|\hat{v}^0(x^{k-1}, y^{k-1})-P_{y^*(x^{k-1})\rightarrow y^{k-1}} v^*(x^{k-1})\|^2 \\&+ 16\kappa^2\alpha^2\ell_{g, 1}^2\kappa\left(\frac{\sqrt{\kappa} - 1}{\sqrt{\kappa} + 1}\right)^{2 N}\bigg(\ell_{g, 2}\ell_{f, 0} + \frac{\ell_{f, 1}}{\mu}\bigg)^2\dist(y^{k-1}, y^*(x^{k-1}))^2 \\
        \leq & \bigg[2 + 8\kappa^2\alpha^2\Gamma^2+16\kappa^2\alpha^2\ell_{g, 1}^2\kappa\left(\frac{\sqrt{\kappa} - 1}{\sqrt{\kappa} + 1}\right)^{2 N}\bigg(\ell_{g, 2}\ell_{f, 0} + \frac{\ell_{f, 1}}{\mu}\bigg)^2\bigg] (1-2\mu\tau\beta^2)^{T} \dist(y^*(x^{k-1}), y^{k-1, 0})^2 \\
        &+ 16\kappa^2\alpha^2\ell_{g, 1}^2\kappa\left(\frac{\sqrt{\kappa} - 1}{\sqrt{\kappa} + 1}\right)^{2 N} \|\hat{v}^0(x^{k-1}, y^{k-1})-P_{y^*(x^{k-1})\rightarrow y^{k-1}} v^*(x^{k-1})\|^2+ 4\kappa^2\alpha^2\|\grad\Phi(x^{k-1})\|^2.
    \end{split}
    \]
    Now we bound $\|P_{y^*(x^k)\rightarrow y^k}v^*(x^k) - \hat{v}^0(x^{k}, y^{k})\|^2$. We have
    \begin{equation}\label{eq_lemma2_temp1}
    \begin{split}
        &\|P_{y^*(x^k)\rightarrow y^k}v^*(x^k) - \hat{v}^0(x^{k}, y^{k})\|^2 =\|P_{y^*(x^k)\rightarrow y^k}v^*(x^k) - P_{y^{k-1}\rightarrow y^k}\hat{v}^N(x^{k-1}, y^{k-1})\|^2 \\
        \leq & 2\|P_{y^*(x^k)\rightarrow y^k}v^*(x^k) - P_{y^*(x^{k-1})\rightarrow y^k}v^*(x^{k-1})\|^2 + 2\|P_{y^*(x^{k-1})\rightarrow y^k}v^*(x^{k-1}) - P_{y^{k-1}\rightarrow y^k}\hat{v}^N(x^{k-1}, y^{k-1})\|^2\\
        \leq & 2\|P_{y^*(x^k)\rightarrow y^*(x^{k-1})}v^*(x^k) - v^*(x^{k-1})\|^2 + 4\|P_{y^*(x^{k-1})\rightarrow y^{k-1}}v^*(x^{k-1}) - \Tilde{v}^{k-1}\|^2 + 4\|\Tilde{v}^{k-1} - \hat{v}^N(x^{k-1}, y^{k-1})\|^2 \\
        \leq &4\kappa\left(\frac{\sqrt{\kappa} - 1}{\sqrt{\kappa} + 1}\right)^{2 N}\|\Tilde{v}^{k-1} - \hat{v}^0(x^{k-1}, y^{k-1})\|^2 \\&+ 2\|P_{y^*(x^k)\rightarrow y^*(x^{k-1})}v^*(x^k) - v^*(x^{k-1})\|^2 + 4\|P_{y^*(x^{k-1})\rightarrow y^k}v^*(x^{k-1}) - \Tilde{v}^{k-1}\|^2 \\
        \leq &4\kappa\left(\frac{\sqrt{\kappa} - 1}{\sqrt{\kappa} + 1}\right)^{2 N}\|\Tilde{v}^{k-1} - P_{y^*(x^{k-1})\rightarrow y^{k-1}}v^*(x^{k-1}) \|^2 \\&+ 4\kappa\left(\frac{\sqrt{\kappa} - 1}{\sqrt{\kappa} + 1}\right)^{2 N}\|P_{y^*(x^{k-1})\rightarrow y^{k-1}}v^*(x^{k-1}) - \hat{v}^0(x^{k-1}, y^{k-1})\|^2 \\&+ 2\|P_{y^*(x^k)\rightarrow y^*(x^{k-1})}v^*(x^k) - v^*(x^{k-1})\|^2 + 4\|P_{y^*(x^{k-1})\rightarrow y^{k-1}}v^*(x^{k-1}) - \Tilde{v}^{k-1}\|^2 \\
        = & 4\kappa\left(\frac{\sqrt{\kappa} - 1}{\sqrt{\kappa} + 1}\right)^{2 N}\|P_{y^*(x^{k-1})\rightarrow y^{k-1}}v^*(x^{k-1}) - \hat{v}^0(x^{k-1}, y^{k-1})\|^2 \\&+ 2\|P_{y^*(x^k)\rightarrow y^*(x^{k-1})}v^*(x^k) - v^*(x^{k-1})\|^2 + 4\left(\kappa(\frac{\sqrt{\kappa} - 1}{\sqrt{\kappa} + 1})^{2 N} + 1\right)\|P_{y^*(x^{k-1})\rightarrow y^{k-1}}v^*(x^{k-1}) - \Tilde{v}^{k-1}\|^2,
    \end{split}
    \end{equation}
    where in the second last inequality we again used Lemma \ref{lemma_cg_error}. Now we inspect the two terms in the last line above. Note that the last term is bounded in \eqref{eq_lemma2_temp0}. For the first term, by \eqref{eq_bound_v_star} we have
    $$
    \|P_{y^*(x^k)\rightarrow y^*(x^{k-1})}v^*(x^k) - v^*(x^{k-1})\|^2\leq \bigg(\frac{\ell_{f, 0}\ell_{g, 2}}{\mu^2}\sqrt{1+\kappa^2} + \frac{\ell_{f, 1}}{\mu}\bigg)^2\alpha^2 \|\grad\Phi(x^{k-1})\|^2.
    $$
    Now plugging everything back to \eqref{eq_lemma2_temp1} we get
    \begin{equation}
    \begin{split}
        &\|P_{y^*(x^k)\rightarrow y^k}v^*(x^k) - \hat{v}^0(x^{k}, y^{k})\|^2\\ \leq & 4\kappa\left(\frac{\sqrt{\kappa} - 1}{\sqrt{\kappa} + 1}\right)^{2 N}\|P_{y^*(x^{k-1})\rightarrow y^{k-1}}v^*(x^{k-1}) - \hat{v}^0(x^{k-1}, y^{k-1})\|^2 \\&+ 2\bigg(\frac{\ell_{f, 0}\ell_{g, 2}}{\mu^2}\sqrt{1+\kappa^2} + \frac{\ell_{f, 1}}{\mu}\bigg)^2\alpha^2 \|\grad\Phi(x^{k-1})\|^2 \\&+ 4\left(\kappa(\frac{\sqrt{\kappa} - 1}{\sqrt{\kappa} + 1})^{2 N} + 1\right)\bigg(\ell_{g, 2}\ell_{f, 0} + \frac{\ell_{f, 1}}{\mu}\bigg)^2(1-2\mu\tau\beta^2)^{T}\dist(y^{k-1, 0}, y^*(x^{k-1}))^2,
    \end{split}
    \end{equation}
    where we also used Lemma \ref{lemma_inner} in the last inequality. Now summing up the bound for $\dist(y^{k, 0}, y^*(x^k))^2$ and $\|P_{y^*(x^k)\rightarrow y^k}v^*(x^k) - \hat{v}^0(x^{k}, y^{k})\|^2$, we get:
    \[
    \begin{split}
        &\dist(y^{k, 0}, y^*(x^k))^2 + \|P_{y^*(x^k)\rightarrow y^k}v^*(x^k) - \hat{v}^0(x^{k}, y^{k})\|^2\\ 
        \leq & C_1(1-2\mu\tau\beta^2)^{T}\dist(y^{k-1, 0}, y^*(x^{k-1}))^2 \\
        &+ C_2\|P_{y^*(x^{k-1})\rightarrow y^{k-1}}v^*(x^{k-1}) - \hat{v}^0(x^{k-1}, y^{k-1})\|^2 \\ 
        &+ \left[2\bigg(\frac{\ell_{f, 0}\ell_{g, 2}}{\mu^2}\sqrt{1+\kappa^2} + \frac{\ell_{f, 1}}{\mu}\bigg)^2+4\kappa^2\right]\alpha^2 \|\grad\Phi(x^{k-1})\|^2,
    \end{split}
    \]
    with
    \[
    \begin{split}
        &C_1 = \bigg(6 + 8\kappa^2\alpha^2\Gamma^2+C_2\bigg)\bigg(\ell_{g, 2}\ell_{f, 0} + \frac{\ell_{f, 1}}{\mu}\bigg)^2\\
        &C_2 = \bigg(4 +16\kappa^2\alpha^2\ell_{g, 1}^2\bigg)\kappa\left(\frac{\sqrt{\kappa} - 1}{\sqrt{\kappa} + 1}\right)^{2 N}.
    \end{split}
    \]
    Now consider the choice of $T$ and $N$ in the statement of this lemma, we can guarantee that $C_1, C_2\leq 1/2$, thus
    \[
    \begin{split}
        &\dist(y^{k, 0}, y^*(x^k))^2 + \|P_{y^*(x^k)\rightarrow y^k}v^*(x^k) - \hat{v}^0(x^{k}, y^{k})\|^2\\ \leq & \frac{1}{2}(\dist(y^{k-1, 0}, y^*(x^{k-1}))^2 + \|P_{y^*(x^k)\rightarrow y^k}v^*(x^k) - \hat{v}^0(x^{k}, y^{k})\|^2) + \Omega \|\grad\Phi(x^{k-1})\|^2.
    \end{split}
    \]
    The final result is obtained by taking the telescoping sum of the above inequality.
\end{proof}

\begin{lemma}\label{lemma3}
    Suppose the parameters are set the same as in Lemma \ref{lemma2}, then we have
    \begin{equation}
        \|h_{\Phi}^k - {\grad}\Phi(x^{k})\|^2 \leq \delta_{T, N}\left(\frac{1}{2}\right)^{k} \Delta_{0}+\delta_{T, N} \Omega \sum_{j=0}^{k-1}\left(\frac{1}{2}\right)^{k-1-j}\left\|\grad \Phi\left(x^{j}\right)\right\|^{2},
    \end{equation}
    where
    \begin{equation}
    \delta_{T, N}=2\Gamma^2 (1-2\mu\tau\beta^2)^{T}\ell_{g, 1}^2\kappa\left(\frac{\sqrt{\kappa} - 1}{\sqrt{\kappa} + 1}\right)^{2 N}.
    \end{equation}
\end{lemma}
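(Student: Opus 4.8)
The plan is to consolidate the two preceding lemmas. Lemma~\ref{lemma1} decomposes the one-step hypergradient error $\|h_\Phi^k - \grad\Phi(x^k)\|$ into a lower-loop part (proportional to $(1-2\mu\tau\beta^2)^{T/2}\dist(y^*(x^k),y^{k-1})$, coming from the inexactness of the inner gradient steps) and a conjugate-gradient part (proportional to $\big(\tfrac{\sqrt\kappa-1}{\sqrt\kappa+1}\big)^N\|\hat v^0(x^k,y^k)-P_{y^*(x^k)\rightarrow y^k}v^*(x^k)\|$, coming from the warm-started linear solve), while Lemma~\ref{lemma2} bounds exactly the two squared quantities $\dist(y^{k,0},y^*(x^k))^2$ and $\|P_{y^*(x^k)\rightarrow y^k}v^*(x^k)-\hat v^0(x^k,y^k)\|^2$ by $(1/2)^k\Delta_0+\Omega\sum_{j=0}^{k-1}(1/2)^{k-1-j}\|\grad\Phi(x^j)\|^2$. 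So the proof is just a matter of squaring the first estimate and substituting the second.

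Concretely, first I would square the bound of Lemma~\ref{lemma1} and apply $(a+b)^2\le 2a^2+2b^2$, which produces the coefficient $2\Gamma^2(1-2\mu\tau\beta^2)^T$ in front of $\dist(y^*(x^k),y^{k-1})^2$ and the coefficient $2\ell_{g,1}^2\kappa\big(\tfrac{\sqrt\kappa-1}{\sqrt\kappa+1}\big)^{2N}$ in front of $\|\hat v^0(x^k,y^k)-P_{y^*(x^k)\rightarrow y^k}v^*(x^k)\|^2$. Second, I would use the algorithmic identity $y^{k-1}=y^{k-1,T}=y^{k,0}$ from RieBO (Algorithm~\ref{algo_bilevel_AID_ITD}) to rewrite $\dist(y^*(x^k),y^{k-1})^2=\dist(y^{k,0},y^*(x^k))^2$, so that both squared quantities are exactly the two nonnegative terms whose sum Lemma~\ref{lemma2} controls; hence each of them is bounded individually by the right-hand side of Lemma~\ref{lemma2}. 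Substituting this common bound for both terms and collecting the scalar factors $\Gamma^2$, $\ell_{g,1}^2$, $\kappa$, the factor $2$ from $(a+b)^2\le 2a^2+2b^2$, and the two geometric-decay factors $(1-2\mu\tau\beta^2)^T$ and $\big(\tfrac{\sqrt\kappa-1}{\sqrt\kappa+1}\big)^{2N}$ into $\delta_{T,N}$ yields the stated inequality.

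There is no genuinely hard step here; this lemma is pure bookkeeping once Lemmas~\ref{lemma1} and \ref{lemma2} are in hand. The only points that need attention are: (i) confirming that the parallel transports and base points in the residual terms of the two lemmas are literally the same, so that the $v$-residual appearing in Lemma~\ref{lemma1} coincides with the second term on the left-hand side of Lemma~\ref{lemma2}; (ii) invoking $y^{k-1}=y^{k,0}$ to line up the lower-loop distance with the quantity Lemma~\ref{lemma2} tracks; and (iii) the elementary observation that a bound on a sum of nonnegative quantities bounds each quantity separately, which is what lets us reuse the single estimate of Lemma~\ref{lemma2} for both terms of the squared error. Careful constant-tracking then shows the prefactor collapses to $\delta_{T,N}$, and in particular that it inherits the same geometric decay in $T$ and $N$ already arranged in Lemma~\ref{lemma2}.
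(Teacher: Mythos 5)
Your proposal follows essentially the same route as the paper: the paper's proof is exactly ``square the bound of Lemma~\ref{lemma1}, recognize that the two squared residuals are the two nonnegative terms controlled by Lemma~\ref{lemma2}, and substitute,'' with the only cosmetic difference that the paper keeps the sum $\dist(y^*(x^k),y^{k-1})^2+\|\hat v^0-P v^*\|^2$ together via the inequality $ab+cd\leq(a+c)(b+d)$, whereas you bound each nonnegative summand by the full right-hand side of Lemma~\ref{lemma2} separately; both are valid and yield the same kind of estimate. One caveat on your closing claim that the constants ``collapse to $\delta_{T,N}$'': your derivation (and, for that matter, the paper's own) produces a prefactor of the form $2\Gamma^2(1-2\mu\tau\beta^2)^{T}+2\ell_{g,1}^2\kappa\left(\tfrac{\sqrt{\kappa}-1}{\sqrt{\kappa}+1}\right)^{2N}$, i.e.\ a \emph{sum} of the two decay factors, not the \emph{product} displayed in the lemma statement; the product form appears to be a typo in the paper, and the sum form is what is actually needed (and suffices) in the proof of Theorem~\ref{theorem1}, so this does not affect correctness of the overall argument.
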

\begin{proof}
    By Lemma \ref{lemma1} and $ab+cd\leq(a+c)(b+d)$ for any positive $a,b,c,d$, we have
    $$
        \|h_{\Phi}^k - {\grad}\Phi(x^{k})\|^{2} \leq \delta_{T, N}\left(\dist(y^*(x^k), y^{k-1})^2 + \|\hat{v}^0(x^{k}, y^{k})-P_{y^*(x^k)\rightarrow y^k} v^*(x^k)\|^2\right).
    $$
    The proof is completed by applying Lemma \ref{lemma2}.
\end{proof}

Now we proceed to the proof of Theorem \ref{theorem1}.

\begin{proof}[Proof of Theorem \ref{theorem1}]
    By Lemma \ref{lemma0}, we have
    \[
    \begin{split}
        \Phi(x^{k+1})&\leq \Phi(x^{k}) + \langle\grad \Phi(x^{k}), \Exp^{-1}_{x^{k}}(x^{k+1})\rangle_{x^{k}} + \frac{L_{\Phi}}{2}\dist(x^{k},x^{k+1})^2 \\
        & = \Phi(x^{k}) -\alpha\langle\grad \Phi(x^{k}), h_{\Phi}^k\rangle_{x^{k}} + \frac{L_{\Phi}\alpha^2}{2}\|h_{\Phi}^k\|_{x^{k}}^2 \\
        &\leq \Phi(x^{k}) - (\frac{\alpha}{2} - \alpha^2L_{\Phi})\|\grad\Phi(x^{k})\|_{x^{k}}^{2} + (\frac{\alpha}{2} + \alpha^2L_{\Phi})\|\grad\Phi(x^{k}) - h_{\Phi}^k\|_{x^{k}}^{2}.
    \end{split}
    \]
    Now by using Lemma \ref{lemma3}, we get
    \[
    \begin{split}
        \Phi(x^{k+1}) \leq& \Phi(x^{k}) - (\frac{\alpha}{2} - \alpha^2L_{\Phi})\|\grad\Phi(x^{k})\|_{x^{k}}^{2} \\
        & + (\frac{\alpha}{2} + \alpha^2L_{\Phi})\left[ \delta_{T, N}\left(\frac{1}{2}\right)^{k} \Delta_{0}+\delta_{T, N} \Omega \sum_{j=0}^{k-1}\left(\frac{1}{2}\right)^{k-1-j}\left\|\grad \Phi\left(x^{j}\right)\right\|^{2} \right].
    \end{split}
    \]
    Now by taking the telescoping sum of the above inequality over $k$ from $0$ to $K-1$, we have
    $$
        \begin{aligned}
        \left(\frac{\alpha}{2}-\alpha^{2} L_{\Phi}\right) \sum_{k=0}^{K-1}\left\|\grad \Phi\left(x^{k}\right)\right\|^{2} \leq \Phi\left(x_{0}\right)-\inf _{x\in\M} \Phi(x)+\left(\frac{\alpha}{2}+\alpha^{2} L_{\Phi}\right) \delta_{T, N} \Delta_{0} \\
        +\left(\frac{\alpha}{2}+\alpha^{2} L_{\Phi}\right) \delta_{T, N} \Omega \sum_{k=1}^{K-1} \sum_{j=0}^{k-1}\left(\frac{1}{2}\right)^{k-1-j}\left\|\grad \Phi\left(x^{j}\right)\right\|^{2}.
        \end{aligned}
    $$
    By the fact that
    \[
    \begin{split}
    \sum_{k=1}^{K-1} \sum_{j=0}^{k-1}\left(\frac{1}{2}\right)^{k-1-j}\left\|\grad \Phi\left(x^j\right)\right\|^2 \leq \sum_{k=0}^{K-1} \frac{1}{2^k} \sum_{k=0}^{K-1}\left\|\grad \Phi\left(x^k\right)\right\|^2 \leq 2 \sum_{k=0}^{K-1}\left\|\grad \Phi\left(x^k\right)\right\|^2,
    \end{split}
    \]
    we have
    $$
        \begin{aligned}
        \left(\frac{\alpha}{2}-\alpha^{2} L_{\Phi} - (\alpha+2\alpha^{2} L_{\Phi})\delta_{T, N} \Omega \right) \sum_{k=0}^{K-1}\left\|\grad \Phi\left(x^{k}\right)\right\|^{2} \leq \Phi\left(x_{0}\right)-\inf _{x\in\M} \Phi(x)+\left(\frac{\alpha}{2}+\alpha^{2} L_{\Phi}\right) \delta_{T, N} \Delta_{0}
        \end{aligned}.
    $$
    Choosing $N\geq\Theta(\sqrt{\kappa})$ and $D\geq\Theta(\kappa)$ as in Lemma \ref{lemma2}, we are able to ensure that 
    $$
    \Omega\left(1+2  \alpha L_{\Phi}\right) \delta_{T, N} \leq \frac{1}{4}, \quad \delta_{T, N} \leq 1. 
    $$
    As a result, we get
    $$
    \left(\frac{\alpha}{4}-\alpha^{2} L_{\Phi} \right) \sum_{k=0}^{K-1}\left\|\grad \Phi\left(x^{k}\right)\right\|^{2} \leq \Phi\left(x_{0}\right)-\inf _{x\in\M} \Phi(x) + \left(\frac{\alpha}{2}+\alpha^{2} L_{\Phi}\right) \Delta_{0}.
    $$
    Thus, with $\alpha\leq\frac{1}{8L_{\Phi}}$ we get
    $$
    \frac{1}{K} \sum_{k=0}^{K-1}\left\|\grad \Phi\left(x^{k}\right)\right\|^{2} \leq \frac{64 L_{\Phi}\left(\Phi\left(x_{0}\right)-\inf _{x} \Phi(x)\right)+5 \Delta_{0}}{K}.
    $$
    Now we inspect the oracle complexities. To ensure $\frac{1}{K} \sum_{k=0}^{K-1}\left\|\grad \Phi\left(x^{k}\right)\right\|^{2} \leq\epsilon$, we need $K= \mathcal{O}(\frac{\kappa^3}{\epsilon})$, so that $\operatorname{Gc}(f,\epsilon)=\mathcal{O}(\frac{\kappa^3}{\epsilon})$. Since in each outer iteration, we need $D=\mathcal{O}(\kappa)$ iterations, so $\operatorname{Gc}(g,\epsilon)=\mathcal{O}(\frac{\kappa^4}{\epsilon})$. The Jacobian-vector product count is the same as the iteration number $K$ since it is only conducted once for every iteration. The Hessian-vector product is conducted for $N=\mathcal{O}(\sqrt{\kappa})$ times for each iteration. Thus we have the previously described complexities.
\end{proof}

\section{Stochastic Algorithm RieSBO and Its Convergence}

In this section, we propose RieSBO (Algorithm \ref{algo_stoc_bilevel_AID_ITD}) for stochastic bilevel manifold optimization \eqref{stochastic_problem}. The algorithm is a generalization of its counterpart in the Euclidean space as in \cite{hong2020two,chen2021tighter}, where we employ the Neumann series estimation for the hypergradient as in \eqref{approximate_Neumann_stochastic}.

\begin{algorithm}[!ht]
\SetKwInOut{Input}{input}
\SetKwInOut{Output}{output}
\SetAlgoLined
\Input{$K$, $T$, $Q$, stepsize $\{\alpha_k,\beta_k\}$, initializations $x^0\in\M, y^0\in\N$}
    \For{$k=0,1,2,...,K-1$}{
        Set $y^{k, 0}=y^{k-1}$\;
        \For{$t=0,...,T-1$}{
            Update $y^{k, t+1}\leftarrow \Exp_{y^{k, t}}(-\beta_k \Tilde{h}_{g}^{k, t})$ with $\Tilde{h}_{g}^{k, t}:=\grad_{y}G(x^k,y^{k, t};\zeta_{k, t})$\;
        }
        Set $y^{k}\leftarrow y^{k, T}$\;
        
        Update $x^{k+1}\leftarrow\Exp_{x^{k}}(-\alpha_k \Tilde{h}_{\Phi}^k)$,  where $\Tilde{h}_{\Phi}^k$ is as defined in \eqref{approximate_Neumann_stochastic}\;
    }
 \label{algo_stoc_bilevel_AID_ITD}
 \caption{Algorithm for {\bf Rie}mannian {\bf S}tochastic {\bf B}ilevel {\bf O}ptimization (\bf{RieSBO})}
\end{algorithm}

For the stochastic case, we utilize the following notion of stationarity.
\begin{definition}\label{def_stochastic_eps_stationary}
    A random point $x\in\M$ is called an $\epsilon$-stationary point for \eqref{stochastic_problem} if $\E\|\nabla\Phi(x)\|^2\leq\epsilon$.
\end{definition}

We now proceed to the convergence analysis for the Riemannian stochastic bilevel optimization (RieSBO, Algorithm \ref{algo_stoc_bilevel_AID_ITD}). For RieSBO, we need the following additional assumption over the mean and variance of the estimators.
\begin{assumption}\label{assumption_5}
    The stochastic gradients satisfy $\grad F(x,y;\xi) = [\grad_x F(x,y;\xi), \grad_y F(x,y;\xi)]$ and $\grad G(x,y;\zeta)= [\grad_x G(x,y;\zeta), \grad_y G(x,y;\zeta)]$. The second order gradients $\grad_{x, y}^2 G(x,y;\zeta)$, $H_y(G(x,y;\zeta))$ are all unbiased estimators of the corresponding deterministic quantities of $f$ and $g$. Their variances are all bounded by $\sigma^2$ (in tangent space norms and operator norms, respectively for the Riemannian gradient and Riemannian Hessian).
\end{assumption}
Note that we do not need to assume the smoothness or strong-convexity of the stochastic functions $F$ and $G$. 

Now we are ready to state the following convergence result.
\begin{theorem}\label{theorem3}
    Suppose Assumptions \ref{assumption_1}, \ref{assumption_2}, \ref{assumption_3}, \ref{assumption_4} and \ref{assumption_5} hold. If we take the stepsizes $\alpha_k=\alpha=\frac{1}{\kappa^{5/2}\sqrt{K}}$, $\beta_k=\beta=\min\{\frac{1}{\kappa^{7/4}\sqrt{K}}, \frac{1}{\ell_{g, 1}}\}$, also $\eta = 1/\ell_{g, 1}, Q=\mathcal{O}(\kappa\log K)$ and $T=\mathcal{O}(\kappa^4)$. Also suppose that the random variables for all iterations $\zeta_k^t$, $\zeta_{k,(q)}$, $\xi_k$ are i.i.d. samples, then RieSBO (Algorithm \ref{algo_stoc_bilevel_AID_ITD}) satisfies
    $$
        \frac{1}{K}\sum_{k=0}^{K-1}\E[\|\grad\Phi(x^{k})\|^2]\leq \mathcal{O}\left(\frac{\kappa^{2.5}}{\sqrt{K}}\right).
    $$
    Here the expectation is taken with respect to all the random samples. In order to obtain an $\epsilon$-stationary point, i.e., $\frac{1}{K}\sum_{k=0}^{K-1}\E[\|\grad\Phi(x^{k})\|^2]\leq\epsilon$, the oracle complexities needed are given by:
    \begin{itemize}
        \item Gradients: $\operatorname{Gc}(f,\epsilon)=\mathcal{O}(\kappa^5\epsilon^{-2})$, $\operatorname{Gc}(g,\epsilon)=\mathcal{O}(\kappa^9\epsilon^{-2})$;
        
        \item Jacobian and Hessian-vector products: $\operatorname{JV}(g, \epsilon)=\mathcal{O}(\kappa^5\epsilon^{-2})$, $\operatorname{HV}(g, \epsilon)=\mathcal{O}(\kappa^{6}\epsilon^{-2})$.
    \end{itemize}
\end{theorem}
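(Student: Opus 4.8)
The plan is to mirror the deterministic analysis (Lemmas~\ref{lemma_inner}--\ref{lemma3} and Theorem~\ref{theorem1}), carrying conditional expectations throughout and splitting the hypergradient error into an inner‑inexactness part, a Neumann‑truncation bias, and a zero‑mean stochastic part. Let $\mathcal{F}_k$ be the $\sigma$‑algebra generated by all randomness through the inner loop of outer step $k$, so that $x^k$ and $y^k$ are $\mathcal{F}_k$‑measurable while the fresh samples $\xi_k,\zeta_{k,(0)},\zeta_{k,(1)},\dots$ that form $\tilde h_\Phi^k$ are not. Put $\bar v(x,y):=H_y(g(x,y))^{-1}\grad_y f(x,y)$ and $\bar h_\Phi(x,y):=\grad_x f(x,y)-\grad_{y,x}^2 g(x,y)[\bar v(x,y)]$, so that $\bar h_\Phi(x,y^*(x))=\grad\Phi(x)$ by \eqref{grad_Phi}. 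First I would write the descent inequality: by the $L_\Phi$‑smoothness of $\Phi$ from Lemma~\ref{lemma0} and $x^{k+1}=\Exp_{x^k}(-\alpha\tilde h_\Phi^k)$,
\[
\Phi(x^{k+1})\le \Phi(x^k)-\alpha\langle \grad\Phi(x^k),\tilde h_\Phi^k\rangle_{x^k}+\tfrac{L_\Phi\alpha^2}{2}\|\tilde h_\Phi^k\|^2,
\]
and then, taking $\E[\,\cdot\mid\mathcal{F}_k]$ and applying Young's inequality,
\[
\E[\Phi(x^{k+1})\mid\mathcal{F}_k]\le \Phi(x^k)-\Big(\tfrac{\alpha}{2}-2L_\Phi\alpha^2\Big)\|\grad\Phi(x^k)\|^2+\Big(\tfrac{\alpha}{2}+2L_\Phi\alpha^2\Big)b_k^2+\tfrac{L_\Phi\alpha^2}{2}s_k^2,
\]
where $b_k:=\|\E[\tilde h_\Phi^k\mid\mathcal{F}_k]-\grad\Phi(x^k)\|$ is the hypergradient bias and $s_k^2:=\E[\|\tilde h_\Phi^k-\E[\tilde h_\Phi^k\mid\mathcal{F}_k]\|^2\mid\mathcal{F}_k]$ its variance.

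Second, I would bound the bias as $b_k\le\|\E[\tilde h_\Phi^k\mid\mathcal{F}_k]-\bar h_\Phi(x^k,y^k)\|+\|\bar h_\Phi(x^k,y^k)-\grad\Phi(x^k)\|$. Because the samples $\zeta_{k,(q)}$ and $\xi_k$ are independent and unbiased (Assumption~\ref{assumption_5}), the conditional mean of the estimator $v_Q^k$ in \eqref{approximate_Neumann_stochastic} collapses to the $Q$‑term Neumann partial sum $\eta\sum_{q=0}^{Q-1}(I-\eta H_y(g(x^k,y^k)))^q[\grad_y f(x^k,y^k)]$, and with $\eta=1/\ell_{g,1}$ one has $\|I-\eta H_y(g)\|_{\mathrm{op}}\le 1-1/\kappa$, so this differs from $\bar v(x^k,y^k)$ by at most $\tfrac{\ell_{f,0}}{\mu}(1-1/\kappa)^Q$, which is $\mathcal{O}(1/K)$ once $Q=\Theta(\kappa\log K)$. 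The second term is a Lipschitz estimate obtained by the same chain of triangle inequalities as in Lemma~\ref{lemma1} (using \eqref{eq_bound_v_star}, \eqref{eq_bound_tilde_v_v_star} and Assumptions~\ref{assumption_2}--\ref{assumption_4}), giving $\|\bar h_\Phi(x^k,y^k)-\grad\Phi(x^k)\|\le\Gamma'\,\dist(y^k,y^*(x^k))$ with $\Gamma'=\mathcal{O}(\kappa^3)$; hence $b_k^2\le 2\Gamma'^2\dist(y^k,y^*(x^k))^2+\mathcal{O}(1/K^2)$.

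Third — and this is the main obstacle — I would bound $s_k^2$. The difficulty is that $v_Q^k$ is a product of $Q'$ independent random operators with a uniformly random truncation length $Q'\in\{0,\dots,Q-1\}$, so its second moment must be expanded factor by factor; using $\E[\|I-\eta H_y(G(\cdot;\zeta))\|_{\mathrm{op}}^2]\le(1-1/\kappa)^2+\eta^2\sigma^2$ together with independence (the randomized‑truncation device of \cite[Lemma~1]{hong2020two} and \cite{chen2021tighter}, which prevents an $\eta Q$ blow‑up of the variance) yields $\E[\|v_Q^k\|^2\mid\mathcal{F}_k]=\mathcal{O}(\kappa^2(\ell_{f,0}^2+\sigma^2))$ and $\E[\|v_Q^k-\E[v_Q^k\mid\mathcal{F}_k]\|^2\mid\mathcal{F}_k]=\mathcal{O}(\kappa^2\sigma^2)$; combining with the variances of $\grad_x F$ and $\grad_{y,x}^2 G(\cdot;\zeta_{k,(0)})$ gives a $K$‑independent bound $s_k^2\le\sigma_\Phi^2=\mathcal{O}(\mathrm{poly}(\kappa)\sigma^2)$. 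I would also prove a stochastic inner‑loop lemma analogous to Lemma~\ref{lemma_inner}: SGD on the $\mu$‑geodesically strongly convex $g(x^k,\cdot)$ with $\beta\le 1/\ell_{g,1}$, via \cite[Corollary~8]{zhang2016first}, satisfies $\mathbb{E}_k[\dist(y^{k,T},y^*(x^k))^2]\le(1-\rho_\beta)^T\dist(y^{k,0},y^*(x^k))^2+\mathcal{O}(\tau\beta\sigma^2/\mu)$ (with $\mathbb{E}_k$ conditioning on $x^k,y^{k,0}$ and contraction rate $\rho_\beta=\Theta(\mu\tau\beta^2)$) — note the noise floor $\mathcal{O}(\tau\beta\sigma^2/\mu)$ that does not vanish with $T$. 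Combining this with the warm start $y^{k+1,0}=y^k$ and the $\kappa$‑Lipschitzness of $y^*$ from \eqref{phi_smoothness-1} (so $\dist(y^*(x^{k+1}),y^*(x^k))\le\kappa\alpha\|\tilde h_\Phi^k\|$), and with $\E[\|\tilde h_\Phi^k\|^2\mid\mathcal{F}_k]\le 2\|\grad\Phi(x^k)\|^2+2b_k^2+\sigma_\Phi^2$, yields a coupled recursion for $D_k:=\E[\dist(y^{k,0},y^*(x^k))^2]$ of the form $D_{k+1}\le 2(1-\rho_\beta)^T D_k+2\kappa^2\alpha^2\,\E[\|\tilde h_\Phi^k\|^2]+\mathcal{O}(\tau\beta\sigma^2/\mu)$; choosing $T=\Theta(\kappa^4)$ makes $2(1-\rho_\beta)^T\Gamma'^2$ small, exactly as in Lemma~\ref{lemma2}.

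Finally, I would form the Lyapunov function $V^k:=\E[\Phi(x^k)]+c\,D_k$ with $c$ chosen (as in Lemma~\ref{lemma2}) so that the $D_k$‑terms telescope with a $\tfrac12$ contraction, sum the resulting one‑step inequality over $k=0,\dots,K-1$, divide by $K$, and substitute $\alpha=1/(\kappa^{5/2}\sqrt{K})$, $\beta=\min\{1/(\kappa^{7/4}\sqrt{K}),1/\ell_{g,1}\}$, $Q=\Theta(\kappa\log K)$, $T=\Theta(\kappa^4)$. With these choices the parasitic terms $2L_\Phi\alpha^2$ and $c\kappa^2\alpha^2$ are dominated by $\tfrac{\alpha}{4}$, and the residual bias/variance/noise‑floor contributions $\mathcal{O}(L_\Phi\alpha\sigma_\Phi^2)$ and $\mathcal{O}(\tau\beta\sigma^2/(\mu\alpha))$ balance against $\tfrac{1}{\alpha K}\big(\Phi(x^0)-\inf_{x\in\M}\Phi(x)+cD_0\big)$, which gives $\tfrac1K\sum_{k=0}^{K-1}\E[\|\grad\Phi(x^k)\|^2]=\mathcal{O}(\kappa^{2.5}/\sqrt{K})$. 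Setting this equal to $\epsilon$ yields $K=\mathcal{O}(\kappa^5\epsilon^{-2})$, hence $\operatorname{Gc}(F,\epsilon)=\operatorname{JV}(G,\epsilon)=\mathcal{O}(\kappa^5\epsilon^{-2})$ (one call per outer step), $\operatorname{Gc}(G,\epsilon)=\mathcal{O}(TK)=\mathcal{O}(\kappa^9\epsilon^{-2})$, and $\operatorname{HV}(G,\epsilon)=\mathcal{O}(QK)=\mathcal{O}(\kappa^{6}\epsilon^{-2})$ up to the $\log K$ factor. Besides the product‑of‑random‑operators variance bound for $v_Q^k$, the other delicate point is to track the curvature constant $\tau$ (Assumption~\ref{assumption_2}) and every parallel transport through the coupled recursion so that no spurious powers of $\kappa$ enter $\Gamma'$, $L_\Phi$ or $\sigma_\Phi$.
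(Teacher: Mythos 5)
Your proposal follows essentially the same route as the paper's proof: a conditional-expectation descent inequality, a bias/variance split of the Neumann-series hypergradient estimator (with the truncation bias $b_k\sim(1-1/\kappa)^Q$ controlled by $Q=\Theta(\kappa\log K)$ and the variance bounded via the randomized-truncation device of \citet[Lemma 12]{hong2020two}), a stochastic inner-loop contraction via \citet[Corollary 8]{zhang2016first}, and a Lyapunov function coupling $\Phi(x^k)$ with the lower-level tracking error (the paper takes the coefficient $c=\kappa$), followed by telescoping and the same parameter substitution. The only substantive quantitative discrepancy is your estimate $\Gamma'=\mathcal{O}(\kappa^3)$ for the inner-inexactness Lipschitz constant: the paper's corresponding constant is $\Gamma_0=\ell_{f,1}+\frac{\ell_{f,0}\ell_{g,2}}{\mu}+\ell_{g,1}\bigl(\ell_{g,2}\ell_{f,0}+\frac{\ell_{f,1}}{\mu}\bigr)=\mathcal{O}(\kappa)$, and the stated choices of $T$ and $\beta$ are calibrated to that $\mathcal{O}(\kappa)$ value, so you should recompute that constant before balancing the parameters.
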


To prove this theorem, we need the following lemmas. For simplicity, denote $\mathcal{U}_{k}$ the $\sigma$-algebra generated by all the random samples up to the $(k-1)$-th iterate, and denote $\Bar{h}_{\Phi}^k:=\E[\Tilde{h}_{\Phi}^k\mid\mathcal{U}_{k}]$, i.e., the expectation only with respect to the samples of the current iterate. 

\begin{lemma}\label{lemma6}
    Suppose we estimate the hypergradient $\Tilde{h}_{\Phi}^k$ via (\ref{approximate_Neumann_stochastic}) with $\eta\leq\frac{1}{\ell_{g, 1}}$, then we have the following bounds.
    \begin{equation}\label{lemma6_eq1}
        \E[\|\Tilde{h}_{\Phi}^k - \Bar{h}_{\Phi}^k\|^2\mid\mathcal{U}_{k}]\leq \Tilde{\sigma}^2,
    \end{equation}
    and
    \begin{equation}\label{lemma6_eq2}
        \|\grad\hat{\Phi}(x^{k}) - \Bar{h}_{\Phi}^k\|^2 \leq b_k^2,
    \end{equation}
    where 
    \begin{equation}
    \begin{split}
        &\Tilde{\sigma}^2 := 2\sigma^2 + 6\left(\sigma^2(\sigma^2 + \ell_{f, 0}^2)  +  \ell_{g, 1}^2 (\sigma^2 + \ell_{f, 0}^2)  + \ell_{g, 1}^2\sigma^2\right)\max\{\frac{1}{\mu^2}, \frac{d_1^2}{\eta^2\mu^2}\}=\mathcal{O}(\kappa^2),\\
        &b_k:=\ell_{f, 0}\frac{\ell_{g, 1}}{\mu} (1-\frac{\mu}{\ell_{g, 1}})^{Q},
    \end{split}
    \end{equation}
    and $\hat{\Phi}(x)=f(x, y^T(x))$ which is the approximate function after $T$ steps of the inner loop. 
    
    Further, we have the following bound on the second moment:
    \begin{equation}
        \E[\|\Tilde{h}_{\Phi}^k\|^2\mid\mathcal{U}_{k}]\leq 2\Tilde{\sigma}^2 + 4b_k^2 + 4\ell_{f, 0}^2(1+\kappa)^2 =: \Tilde{C}^2=\mathcal{O}(\kappa^2).
    \end{equation}
\end{lemma}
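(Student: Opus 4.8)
The plan is to decompose the estimator $\tilde h_\Phi^k$ into its conditional mean and a mean‑zero fluctuation, to recognise the product in \eqref{vq} as a \emph{randomized truncation} of the Neumann series for $(H_y g)^{-1}$, and then read off \eqref{lemma6_eq2} as the truncation bias and \eqref{lemma6_eq1} as the collected variances of the three mutually independent ingredients $\xi_k$, $\zeta_{k,(0)}$ and $\zeta_{k,(1)},\dots,\zeta_{k,(Q)}$. The second‑moment bound is then purely mechanical from the first two via Young's inequality, $\|\tilde h_\Phi^k\|^2\le 2\|\tilde h_\Phi^k-\bar h_\Phi^k\|^2+4\|\bar h_\Phi^k-\grad\hat\Phi(x^k)\|^2+4\|\grad\hat\Phi(x^k)\|^2$, together with the a priori estimate $\|\grad\hat\Phi(x^k)\|\le \ell_{f,0}(1+\kappa)$ coming from the hypergradient formula \eqref{grad_Phi} (using $\|(H_y g)^{-1}\|_{\mathrm{op}}\le1/\mu$, $\|\grad^2_{y,x}g\|_{\mathrm{op}}\le\ell_{g,1}$, $\|\grad f\|\le\ell_{f,0}$ from Assumptions \ref{assumption_2}–\ref{assumption_3}).

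First I would condition on the $\sigma$-algebra that fixes $x^k$ and $y^k$ (grouping the inner‑loop randomness with $\mathcal U_k$), compute the conditional mean, and then pass back to $\mathcal U_k$ by the tower property, noting every bound below is uniform in $y^k$. Using Assumption \ref{assumption_5} (unbiasedness), the mutual independence of $\xi_k,\zeta_{k,(0)},\zeta_{k,(1)},\dots,\zeta_{k,(Q)},Q'$, and the identity $\E\big[\prod_{q=1}^{Q'}(I-\eta H_y(G(x^k,y^k;\zeta_{k,(q)})))\big]=\frac1Q\sum_{j=0}^{Q-1}(I-\eta H_y(g(x^k,y^k)))^{j}$ (product of i.i.d.\ unbiased factors, then average over the uniform $Q'$), one gets
\[
\bar h_\Phi^k=\grad_x f(x^k,y^k)-\grad^2_{y,x}g(x^k,y^k)\Big[\eta\textstyle\sum_{j=0}^{Q-1}(I-\eta H_y(g(x^k,y^k)))^{j}\,\grad_y f(x^k,y^k)\Big].
\]
Since $\eta\le 1/\ell_{g,1}$ and $g(x^k,\cdot)$ is $\mu$-strongly geodesically convex, $0\preceq I-\eta H_y(g)\preceq(1-\eta\mu)I$, hence $\eta\sum_{j\ge0}(I-\eta H_y(g))^j=(H_y(g))^{-1}$ and the tail $\eta\sum_{j\ge Q}(I-\eta H_y(g))^j$ has operator norm at most $(1-\eta\mu)^Q/\mu$. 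Multiplying by $\|\grad^2_{y,x}g\|_{\mathrm{op}}\le\ell_{g,1}$ and $\|\grad_y f\|\le\ell_{f,0}$ and using $\eta=1/\ell_{g,1}$ gives $\|\bar h_\Phi^k-\grad\hat\Phi(x^k)\|\le b_k$, where $\grad\hat\Phi(x^k)$ is the surrogate hypergradient obtained from \eqref{grad_Phi} with $y^k$ in place of $y^*(x^k)$; Jensen then yields \eqref{lemma6_eq2}.

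For \eqref{lemma6_eq1} I would write $\grad^2_{y,x}G(x^k,y^k;\zeta_{k,(0)})=\grad^2_{y,x}g(x^k,y^k)+E_0$ and $v_Q^k=\bar v^k+\Delta v^k$ with $\bar v^k=\E[v_Q^k\mid\cdot]$, so that (using $\zeta_{k,(0)}$ independent of $(\xi_k,\zeta_{k,(1:Q)})$) the fluctuation $\tilde h_\Phi^k-\bar h_\Phi^k$ splits, up to Young's inequality, into $\grad_xF-\grad_xf$ (variance $\le\sigma^2$), $\grad^2_{y,x}g[\Delta v^k]$ ($\le\ell_{g,1}^2\E\|\Delta v^k\|^2$), $E_0[\bar v^k]$ ($\le\sigma^2\|\bar v^k\|^2\le\sigma^2\ell_{f,0}^2/\mu^2$) and $E_0[\Delta v^k]$ ($\le\sigma^2\E\|\Delta v^k\|^2$). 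The crux is the variance of the truncated Neumann estimator, $\E\|\Delta v^k\|^2=\var(v_Q^k\mid\cdot)$: writing $v_Q^k=\eta Q\,P_{Q'}\grad_yF(x^k,y^k;\xi_k)$ with $P_j=\prod_{q=1}^{j}(I-\eta H_y(G(x^k,y^k;\zeta_{k,(q)})))$ and $M_j=P_j\grad_yF(x^k,y^k;\xi_k)$, the martingale recursion $M_j=(I-\eta H_y(g(x^k,y^k)))M_{j-1}+B_jM_{j-1}$ with $\E[B_j\mid\cdot]=0$, $\E\|B_j\|_{\mathrm{op}}^2\le\eta^2\sigma^2$ gives $\E\|M_j\|^2\le((1-\eta\mu)^2+\eta^2\sigma^2)^{j}\E\|\grad_yF\|^2$; averaging over the uniform $Q'\in\{0,\dots,Q-1\}$, multiplying by $\eta^2Q^2$, and inserting $\E\|\grad_yF\|^2\le\sigma^2+\ell_{f,0}^2$, $\eta=1/\ell_{g,1}$, $Q=\Theta(\kappa\log K)$ collapses everything into $\tilde\sigma^2=\mathcal O(\kappa^2)$ — this is precisely where the factor $\max\{1/\mu^2,d_1^2/(\eta^2\mu^2)\}$ in the statement enters, as the value of the summed geometric series scaled by the $\eta^2Q^2$ prefactor.

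The main obstacle is the estimate $\E\|\Delta v^k\|^2=\mathcal O(\kappa^2)$. The individual factors $I-\eta H_y(G(\cdot;\zeta))$ are not uniformly contractive — Assumption \ref{assumption_5} controls only their variance, not an almost‑sure bound — so one cannot use a crude operator‑norm product and must exploit the mean‑zero structure through the martingale recursion; moreover it is essential that $Q'$ is \emph{randomized uniformly} rather than a fixed truncation, since this is what tames the $\eta^2Q^2$ prefactor, and obtaining exactly $\mathcal O(\kappa^2)$ (with no stray $\log K$) relies on the specific interplay of $\eta$, $\mu$, $\ell_{g,1}$ and $Q$. Everything else — the bias tail, the variance split, and the final second‑moment bound — is routine once this core Neumann‑estimator variance bound is in hand.
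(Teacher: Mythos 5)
Your overall architecture matches the paper's: the same three-way split of the second moment via Young's inequality with the a priori bound $\|\grad\hat\Phi(x^k)\|\le\ell_{f,0}(1+\kappa)$, the same identification of $\bar h_\Phi^k$ through the expectation of the randomized Neumann product, and the same treatment of the bias \eqref{lemma6_eq2} as the tail $\eta\sum_{j\ge Q}(I-\eta H_y(g))^j$ of operator norm at most $(1-\eta\mu)^Q/\mu$ (the paper cites Lemma~3.2 of \citet{ghadimi2018approximation} for exactly this). Your decomposition of the fluctuation into the four pieces $\grad_xF-\grad_xf$, $\grad^2_{y,x}g[\Delta v^k]$, $E_0[\bar v^k]$, $E_0[\Delta v^k]$ is a cosmetic regrouping of the paper's three-term split involving $H^k-\E[H^k]$; both reduce the whole of \eqref{lemma6_eq1} to controlling the second moment of the randomized Neumann product $H^k$.

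That reduction is where your argument has a genuine gap. The paper simply imports $\E\|H^k\|_{\mathrm{op}}^2\le d_1/(\eta\mu)$ from \citet[Lemma~12]{hong2020two}; you instead propose the martingale recursion $\E\|M_j\|^2\le\bigl((1-\eta\mu)^2+\eta^2\sigma^2\bigr)^j\,\E\|\grad_yF\|^2$. The recursion itself is sound, but the growth factor $\rho:=(1-\eta\mu)^2+\eta^2\sigma^2$ is \emph{not} guaranteed to be at most $1$ under Assumption~\ref{assumption_5}: with $\eta=1/\ell_{g,1}$ this requires $\sigma^2\le 2\mu\ell_{g,1}-\mu^2$, which is nowhere assumed. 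If $\rho>1$ the quantity $\eta^2Q^2\cdot\frac1Q\sum_{j=0}^{Q-1}\rho^j$ is exponentially large in $Q=\Theta(\kappa\log K)$, hence polynomial in $K$, destroying the claim $\tilde\sigma^2=\mathcal{O}(\kappa^2)$; and even when $\rho<1$ the geometric sum gives $\eta^2Q/(1-\rho)=\Theta(\eta Q/\mu)=\Theta(\kappa\log K/(\mu\ell_{g,1}))$, so a stray $\log K$ survives rather than ``collapsing'' as you assert. Your final step is therefore a claim, not a proof, at precisely the point you yourself identify as the crux. (To be fair, the paper's route is also delicate: the cited lemma of \citet{hong2020two} rests on an almost-sure two-sided bound $\mu I\preceq H_y(G(\cdot;\zeta))\preceq\ell_{g,1}I$ on the stochastic Hessians, which is stronger than the variance-only Assumption~\ref{assumption_5}; you correctly noticed this mismatch, but your replacement does not close it. To complete your route you would need either an almost-sure contractivity assumption on the factors, or a small enough $\eta$ depending on $\sigma$, neither of which is available as stated.)
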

\begin{proof}[Proof of Lemma \ref{lemma6}]
    By the expression \eqref{grad_Phi} for $\grad\Phi(x)$ and (\ref{stoch_grad_estimate}) for $\Tilde{h}_{\Phi}^k$, we have: 
    \[
    \begin{split}
        &\grad\Phi(x^k) = \grad_x f(x^k,y^*(x^k)) - \grad_{y, x}^2 g(x^k,y^*(x^k))[v^*(x^k)],\\
        &\grad\hat{\Phi}(x^{k}) = \grad_x f(x^k,y^k) - \grad_{y, x}^2 g(x^k,y^k)[\tilde{v}^{k}],\\
        &\Tilde{h}_{\Phi}^k = \grad_x F(x^k,y^k;\xi_k) - \grad_{y, x}^2 G(x^k,y^k;\zeta_{k, (0)})[v_{Q}^k],
    \end{split}
    \]
    where again $\tilde{v}^{k}:=(H_y(g(x^{k},y^{k, T})))^{-1}\grad_y f(x^{k},y^{k, T})$.
    
    For \eqref{lemma6_eq1}, denote
    $$
    \bar{v}_{Q}^k=\E[v_{Q}^k]=\eta \sum_{q=1}^{Q}(I - \eta H_y( g(x^k, y^k)) )^q [\grad_{y} f (x^k, y^k)].
    $$
    We have 
    \begin{equation}\label{lemma6_temp0}
    \begin{split}
        &\E[\|\Tilde{h}_{\Phi}^k - \Bar{h}_{\Phi}^k\|^2\mid\mathcal{U}_{k}] \\\leq & 2\E[\|\grad_{x} f\big(x^{k}, y^{k, T}\big) - \grad_{x} F\big(x^{k}, y^{k, T} ; \xi_k\big)\|^2\mid\mathcal{U}_{k}] \\
        &+2\E[\|\grad_{y, x}^2 G(x^k,y^k;\zeta_{k, (0)})[v_{Q}^k] - \grad_{y, x}^2 g(x^k,y^k)[\bar{v}_{Q}^k]  \|^2\mid\mathcal{U}_{k}] \\
        \leq & 2\sigma^2 +2\E[\|\grad_{y, x}^2 G(x^k,y^k;\zeta_{k, (0)})[v_{Q}^k] - \grad_{y, x}^2 g(x^k,y^k)[\bar{v}_{Q}^k]  \|^2\mid\mathcal{U}_{k}].
    \end{split}
    \end{equation}
    We now inspect the last term above. Denote 
    $$
        H^k:= \eta Q\prod_{q=1}^{Q'}(I - \eta H_y( G(x^k, y^k;\zeta_{k, (q)})) ),
    $$
    which is our estimation of the Riemannian Hessian at the $k$-th outer iteration, and we have that
    \[
    \begin{split}
        &\grad_{y, x}^2 G(x^k,y^k;\zeta_{k, (0)})[v_{Q}^k] - \grad_{y, x}^2 g(x^k,y^k)[\bar{v}_{Q}^k]\\
        = & \grad_{y, x}^2 G(x^k,y^k;\zeta_{k, (0)})\left[H^k[\grad_{y} F (x^k, y^k ; \xi_k)]\right] - \grad_{y, x}^2 g(x^k,y^k)\left[\E[H^k[\grad_{y} F (x^k, y^k ; \xi_k)]]\right] \\
        = & \left\{\grad_{y, x}^2 G(x^k,y^k;\zeta_{k, (0)}) - \grad_{y, x}^2 g(x^k,y^k)\right\}\left[H^k[\grad_{y} F (x^k, y^k ; \xi_k)]\right] \\
        &+ \grad_{y, x}^2 g(x^k,y^k)\left[\left\{H^k - \E[H^k]\right\}[\grad_{y} F (x^k, y^k ; \xi_k)]\right] \\
        &+ \grad_{y, x}^2 g(x^k,y^k)\E[H^k] \left\{ \grad_{y} F (x^k, y^k ; \xi_k) - \grad_{y} f (x^k, y^k) \right\}.
    \end{split}
    \]
    Since
    \begin{equation*}
    \begin{split}
        &\E[\|\grad_{y} F (x^k, y^k ; \xi_k)\|^2] \\=& \E[\|\grad_{y} F (x^k, y^k ; \xi_k) - \grad_{y} f (x^k, y^k)\|^2] + \E[\|\grad_{y} f (x^k, y^k)\|^2] \leq \sigma^2 + \ell_{f, 0}^2,
    \end{split}
    \end{equation*}
    we have that
    \[
    \begin{split}
        &\E[\|\grad_{y, x}^2 G(x^k,y^k;\zeta_{k, (0)})[v_{Q}^k] - \grad_{y, x}^2 g(x^k,y^k)[\bar{v}_{Q}^k]  \|^2\mid\mathcal{U}_{k}]\\
        \leq & 3\sigma^2(\sigma^2 + \ell_{f, 0}^2) \E\|H^k\|_{\mathrm{op}}^2 + 3 \ell_{g, 1}^2 (\sigma^2 + \ell_{f, 0}^2) \E\|H^k-\E[H^k]\|_{\mathrm{op}}^2 + 3\ell_{g, 1}^2\sigma^2\|\E[H^k]\|_{\mathrm{op}}^2.
    \end{split}
    \]
    It remains to bound $\E\|H^k\|_{\mathrm{op}}^2$ and $\|\E[H^k]\|_{\mathrm{op}}$. For $\E\|H^k\|_{\mathrm{op}}^2$, using \citet[Lemma 12]{hong2020two}, we have that
    \begin{equation*}
        \E\|H^k\|_{\mathrm{op}}^2\leq \frac{d_1}{\eta\mu},
    \end{equation*}
    where $d_1> 0$ is some absolute constant. On the other hand $\|\E[H^k]\|_{\mathrm{op}}$ can be easily calculated as (since $\mu\eta< \mu/\ell_{g,1}<1$)
    \[
    \begin{split}
        \|\E[H^k]\|_{\mathrm{op}} =& \eta\|\sum_{q=1}^{Q}(I - \eta H_y( g(x^k, y^k)))^q\|_{\mathrm{op}}\\
        \leq & \|H^{-1}\|_{\mathrm{op}} \|I - \eta H_y( g(x^k, y^k))\|_{\mathrm{op}} \leq \frac{1}{\mu}.
    \end{split}
    \]
    Therefore, we finally have
    \[
    \begin{split}
        &\E[\|\grad_{y, x}^2 G(x^k,y^k;\zeta_{k, (0)})[v_{Q}^k] - \grad_{y, x}^2 g(x^k,y^k)[\bar{v}_{Q}^k]  \|^2\mid\mathcal{U}_{k}]\\
        \leq & 3\left(\sigma^2(\sigma^2 + \ell_{f, 0}^2)  +  \ell_{g, 1}^2 (\sigma^2 + \ell_{f, 0}^2)  + \ell_{g, 1}^2\sigma^2\right)\max\{\frac{1}{\mu^2}, \frac{d_1^2}{\eta^2\mu^2}\}.
    \end{split}
    \]
    Plugging the above equation to \eqref{lemma6_temp0} we get \eqref{lemma6_eq1}.
    
    Now for \eqref{lemma6_eq2}, since
    \[
    \begin{split}
        \Bar{h}_{\Phi}^k:= & \E\left[\grad_x F(x^k,y^k;\xi_k) - \grad_{y, x}^2 G(x^k,y^k;\zeta_{k, (0)})[v_{Q}^k]\right]\\
        &= \grad_x f(x^k,y^k) - \grad_{y, x}^2 g(x^k,y^k)[\bar{v}_{Q}^k],
    \end{split}
    \]
    we have
    \[
    \begin{split}
        &\|\grad\hat{\Phi}(x^{k}) - \Bar{h}_{\Phi}^k\|^2 \leq \|\grad_{y, x}^2 g(x^k,y^k)\|_{\mathrm{op}}^2\|\tilde{v}^{k} - \bar{v}_{Q}^k\|^2 \leq \ell_{g, 1}^2\|\tilde{v}^{k} - \bar{v}_{Q}^k\|^2\\
        = & \ell_{g, 1}^2\|(H_y(g(x^{k},y^{k})))^{-1}[\grad_y f(x^{k},y^{k})] - \eta \sum_{q=1}^{Q}(I - \eta H_y( g(x^k, y^k)) )^q [\grad_{y} f (x^k, y^k)]\|^2\\
        \leq & \ell_{g, 1}^2\ell_{f, 0}^2\|(H_y(g(x^{k},y^{k})))^{-1} - \eta \sum_{q=1}^{Q}(I - \eta H_y( g(x^k, y^k)) )^q\|_{\mathrm{op}}^2\\
        \leq & \ell_{f, 0}^2\frac{\ell_{g, 1}^2}{\mu^2} (1-\frac{\mu}{\ell_{g, 1}})^{2Q}=b_k^2,
    \end{split}
    \]
    where the last line is by \citet[Lemma 3.2]{ghadimi2018approximation}. Note that we take $\eta\leq\frac{1}{\ell_{g, 1}}$ so that the Neumann sequence converges.
    
    Now for the moment $\E[\|\Tilde{h}_{\Phi}^k\|^2\mid\mathcal{U}_{k}]$, we have
    \[
    \begin{split}
        \E[\|\Tilde{h}_{\Phi}^k\|^2\mid\mathcal{U}_{k}]\leq &2\E[\|\Tilde{h}_{\Phi}^k - \Bar{h}_{\Phi}^k\|^2\mid\mathcal{U}_{k}] + 4\|\Bar{h}_{\Phi}^k - \grad\hat{\Phi}(x^{k})\|^2 + 4 \|\grad\hat{\Phi}(x^{k})\|^2\\
        \leq & 2\Tilde{\sigma}^2 + 4b_k^2 + 4\|\grad\hat{\Phi}(x^{k})\|^2.
    \end{split}
    \]
    Since
    \[
    \begin{split}
        \|\grad\hat{\Phi}(x^{k})\| =& \| \grad_x f(x^k,y^k) - \grad_{y, x}^2 g(x^k,y^k)[\tilde{v}^{k}] \| \\
        \leq & \| \grad_x f(x^k,y^k)\|+\|\grad_{y, x}^2 g(x^k,y^k)\|_{\mathrm{op}}\|\tilde{v}^{k}\|\leq \ell_{f, 0} + \ell_{g, 1}\frac{\ell_{f, 0}}{\mu}=\ell_{f, 0}(1+\kappa),
    \end{split}
    \]
    we have
    \[
    \begin{split}
        \E[\|\Tilde{h}_{\Phi}^k\|^2\mid\mathcal{U}_{k}]\leq 2\Tilde{\sigma}^2 + 4b_k^2 + 4\ell_{f, 0}^2(1+\kappa)^2.
    \end{split}
    \]
    This completes the proof. 
\end{proof}

\begin{lemma}\label{lemma5}
    Suppose we have the sequence $\{y^{k, t}\}$ by RieSBO with stepsize $\beta_k=\beta\leq\frac{1}{\ell_{g, 1}}$, then the following inequalities hold:
    \begin{equation}\label{lemma5_eq1}
        \E \dist(y^{k, T}, y^*(x^k))^2\leq (1 - 2\mu\tau\beta^2)^{T}\dist(y^{k, 0}, y^*(x^k))^2 + \tau\beta^2\sigma^2 T,
    \end{equation}
    and
    \begin{equation}\label{lemma5_eq2}
    \begin{aligned}
        &\E[\dist(y^{k, T}, y^*(x^{k+1}))^2]\\
        \leq & 2(1 - 2\mu\tau\beta^2)^{T}\dist(y^{k, 0}, y^*(x^k))^2 + 2\tau\beta^2\sigma^2 T + 4\tau \kappa^2\alpha^2\|\Bar{h}_{\Phi}^k\|_{x^{k}}^2 + 4\tau \kappa^2\alpha^2\Tilde{\sigma}^2.
    \end{aligned}
    \end{equation}
\end{lemma}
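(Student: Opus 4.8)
The plan is to prove \eqref{lemma5_eq1} first, as a stochastic refinement of Lemma~\ref{lemma_inner}, and then obtain \eqref{lemma5_eq2} from it together with the Lipschitzness of $y^*$ (Lemma~\ref{lemma0}) and the second-moment bound of Lemma~\ref{lemma6}. Throughout, expectations are conditional on $\mathcal{U}_k$, so that $x^k$ and $y^{k,0}=y^{k-1}$ are treated as fixed and only the fresh samples of iteration $k$ are averaged.

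For \eqref{lemma5_eq1}, fix $t$ and abbreviate $h=g(x^k,\cdot)$, $y^*=y^*(x^k)$. The per-step starting point is the trigonometric distance inequality \citep[Corollary~8]{zhang2016first} applied to $y^{k,t+1}=\Exp_{y^{k,t}}(-\beta\,\Tilde{h}_g^{k,t})$,
\[
\dist(y^{k,t+1},y^*)^2\le \dist(y^{k,t},y^*)^2+2\beta\big\langle \Tilde{h}_g^{k,t},\Exp_{y^{k,t}}^{-1}(y^*)\big\rangle+\tau\beta^2\|\Tilde{h}_g^{k,t}\|^2 .
\]
Taking the expectation conditioned on the history up through $y^{k,t}$ and using Assumption~\ref{assumption_5}---namely $\E[\Tilde{h}_g^{k,t}\mid\cdot]=\grad h(y^{k,t})$ and, by the bias--variance decomposition, $\E[\|\Tilde{h}_g^{k,t}\|^2\mid\cdot]\le \|\grad h(y^{k,t})\|^2+\sigma^2$---the right-hand side becomes $\dist(y^{k,t},y^*)^2+2\beta\langle \grad h(y^{k,t}),\Exp_{y^{k,t}}^{-1}(y^*)\rangle+\tau\beta^2\|\grad h(y^{k,t})\|^2+\tau\beta^2\sigma^2$. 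The first three terms are handled exactly as in the proof of Lemma~\ref{lemma_inner}, which (via $\mu$-geodesic strong convexity and $\ell_{g,1}$-geodesic smoothness of $g$ in $y$, and $\beta\le1/\ell_{g,1}$ with $\tau<\ell_{g,1}/2$) bounds them by $(1-2\mu\tau\beta^2)\dist(y^{k,t},y^*)^2$; taking a further conditional expectation given $\mathcal{U}_k$ (tower property) yields $\E[\dist(y^{k,t+1},y^*)^2\mid\mathcal{U}_k]\le(1-2\mu\tau\beta^2)\E[\dist(y^{k,t},y^*)^2\mid\mathcal{U}_k]+\tau\beta^2\sigma^2$. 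Unrolling this recursion from $t=0$ to $T-1$ and bounding $\sum_{j=0}^{T-1}(1-2\mu\tau\beta^2)^j\le T$ gives \eqref{lemma5_eq1}.

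For \eqref{lemma5_eq2} I would combine the triangle inequality for the geodesic distance with $(a+b)^2\le2a^2+2b^2$:
\[
\dist(y^{k,T},y^*(x^{k+1}))^2\le 2\dist(y^{k,T},y^*(x^k))^2+2\dist\big(y^*(x^k),y^*(x^{k+1})\big)^2 .
\]
The first summand is handled in expectation by \eqref{lemma5_eq1}, producing the $2(1-2\mu\tau\beta^2)^T\dist(y^{k,0},y^*(x^k))^2+2\tau\beta^2\sigma^2T$ part. For the second, \eqref{phi_smoothness-1} gives $\dist(y^*(x^k),y^*(x^{k+1}))\le\kappa\,\dist(x^k,x^{k+1})=\kappa\alpha\|\Tilde{h}_\Phi^k\|$, and the variance bound \eqref{lemma6_eq1} of Lemma~\ref{lemma6} gives $\E[\|\Tilde{h}_\Phi^k\|^2\mid\mathcal{U}_k]\le 2\|\Bar{h}_\Phi^k\|^2+2\Tilde{\sigma}^2$; multiplying by $2\kappa^2\alpha^2$ and using $\tau\ge1$ to absorb the constant produces the remaining $4\tau\kappa^2\alpha^2\|\Bar{h}_\Phi^k\|^2+4\tau\kappa^2\alpha^2\Tilde{\sigma}^2$. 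Adding the two pieces gives \eqref{lemma5_eq2}.

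The main point to be careful about is the conditioning: $y^{k,T}$ depends on the inner-loop samples $\{\zeta_{k,t}\}_{t<T}$, and $\Tilde{h}_\Phi^k$ depends on those same samples through $y^k$ as well as on $\xi_k$ and $\{\zeta_{k,(q)}\}$, so the two distance terms in \eqref{lemma5_eq2} are not independent; this is harmless because the argument only uses linearity of conditional expectation and bounds each term separately. The only genuinely new ingredient relative to the deterministic case is the accumulation of the variance term $\tau\beta^2\sigma^2$ over the $T$ inner iterations, which the geometric-sum bound $\sum_{j<T}(1-2\mu\tau\beta^2)^j\le T$ controls; everything else is a transcription of Lemma~\ref{lemma_inner}.
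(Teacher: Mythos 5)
Your proof is correct and follows essentially the same route as the paper's: \eqref{lemma5_eq1} via the trigonometric distance inequality of \citet[Corollary~8]{zhang2016first} plus the unbiasedness/variance bound of Assumption~\ref{assumption_5} and the descent argument of Lemma~\ref{lemma_inner}, then \eqref{lemma5_eq2} via $(a+b)^2\le 2a^2+2b^2$, the Lipschitzness of $y^*$, and the second-moment bound on $\Tilde{h}_\Phi^k$. Your explicit remarks on the conditioning and on using $\tau\ge 1$ to absorb the constant are points the paper leaves implicit, but the argument is the same.
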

\begin{proof}[Proof of Lemma \ref{lemma5}]
    For simplicity, all the expectations are conditioned on $\mathcal{U}_{k}$ in this proof.
    
    First we have by \citet[Corollary 8]{zhang2016first} that
    \[
    \begin{split}
        & \E_{\zeta_{k, t}}\dist(y^{k, t+1}, y^*(x^k))^2\\ 
        \leq & \dist(y^{k, t}, y^*(x^k))^2 + 2\beta\langle \grad g(x^k, y^k), \Exp_{y^{k, t}}(y^*(x^k))\rangle + \tau\beta^2\E_{\zeta_{k, t}}\|\Tilde{h}_{g}^{k, t}\|^2 \\
        \leq & \dist(y^{k, t}, y^*(x^k))^2 + 2\beta\langle \grad g(x^k, y^k), \Exp_{y^{k, t}}(y^*(x^k))\rangle + \tau\beta^2\|\grad g(x^k, y^k)\|^2+ \tau\beta^2\sigma^2 \\
        \leq & (1 - 2\mu\tau\beta^2)\dist(y^{k, t}, y^*(x^k))^2 + \tau\beta^2\sigma^2,
    \end{split}
    \]
    where in the last line we used the same trick as the proof of Lemma \ref{lemma_inner}. Note that in the above formulas the expectation is only taken with respect to the random variables in $\Tilde{h}_{g}^{k, t}$, i.e., $\zeta_{k, t}$.
    Repeating this for $T$ times yields \eqref{lemma5_eq1}.
    Now for the second inequality \eqref{lemma5_eq2}, we have
    \begin{equation}\label{lemma5_temp1}
    \begin{aligned}
        &\E[\dist(y^{k, T}, y^*(x^{k+1}))^2]\\
        \leq& 2\E[\dist(y^{k, T}, y^*(x^{k}))^2] + 2\E\dist(y^*(x^{k}), y^*(x^{k+1}))^2 \\
        \leq & 2(1 - 2\mu\tau\beta^2)^{T}\dist(y^{k, 0}, y^*(x^k))^2 + 2\tau\beta^2\sigma^2 T + 2\tau \kappa^2 \E\dist(x^k, x^{k+1})^2,
    \end{aligned}
    \end{equation}
    where the last inequality is by \eqref{lemma5_eq1} and Lemma \ref{lemma0}. For $\E d(x^{k+1}, x^{k})^2$ we have the bound: 
    \[
    \begin{split}
        &\E d(x^{k+1}, x^{k})^2 = \alpha^2\E\|\Tilde{h}_{\Phi}^k\|_{x^{k}}^2 \\
        =&\alpha^2\E\|\Tilde{h}_{\Phi}^k - \Bar{h}_{\Phi}^k + \Bar{h}_{\Phi}^k\|_{x^{k}}^2\leq 2\alpha^2(\|\Bar{h}_{\Phi}^k\|_{x^{k}}^2 + \Tilde{\sigma}^2),
    \end{split}
    \]
    which completes the proof.
\end{proof}

Now we turn to the proof of Theorem \ref{theorem3}.

\begin{proof}[Proof of Theorem \ref{theorem3}]
    Denote $V_k:=\Phi(x^{k}) + \kappa\dist(y^{k-1, T}, y^*(x^{k}))^2$. By Lemma \ref{lemma0} and Lemma \ref{lemma6}, we have
    \[
    \begin{split}
        \E&[\Phi(x^{k+1})\mid\mathcal{U}_{k}]\leq \Phi(x^{k}) + \E[\langle\grad \Phi(x^{k}), \Exp^{-1}_{x^{k}}(x^{k+1})\rangle_{x^{k}}\mid\mathcal{U}_{k}] + \frac{L_{\Phi}}{2}\E[\dist(x^{k},x^{k+1})^2\mid\mathcal{U}_{k}] \\
        =& \Phi(x^{k}) -\alpha\E[\langle\grad \Phi(x^{k}), \Tilde{h}_{\Phi}^k \rangle_{x^{k}}\mid\mathcal{U}_{k}] + \frac{L_{\Phi}\alpha^2}{2}\|\Tilde{h}_{\Phi}^k\|_{x^{k}}^2 \\
        =& \Phi(x^{k}) -\frac{\alpha}{2}\E[\|\grad\Phi(x^{k})\|^2\mid\mathcal{U}_{k}] - (\frac{\alpha}{2}-\frac{\alpha^2 L_{\Phi}}{2})\|\Bar{h}_{\Phi}^k\|^2 + \frac{\alpha}{2}\|\grad\Phi(x^{k}) - \Bar{h}_{\Phi}^k\|^2\\
        &+ \frac{\alpha^2 L_{\Phi}}{2}\E[\|\Tilde{h}_{\Phi}^k - \Bar{h}_{\Phi}^k\|^2\mid\mathcal{U}_{k}] \\
        \leq & \Phi(x^{k}) -\frac{\alpha}{2}\E[\|\grad\Phi(x^{k})\|^2\mid\mathcal{U}_{k}] - (\frac{\alpha}{2}-\frac{\alpha^2 L_{\Phi}}{2})\|\Bar{h}_{\Phi}^k\|^2 + \frac{\alpha}{2}\|\grad\Phi(x^{k}) - \Bar{h}_{\Phi}^k\|^2+ \frac{\alpha^2 L_{\Phi}}{2}\Tilde{\sigma}^2.
    \end{split}
    \]
    Now we decompose the bias term $\|\grad\Phi(x^{k}) - \Bar{h}_{\Phi}^k\|$ as:
    \begin{equation}\label{theorem3_temp0}
    \begin{aligned}
        \|\grad\Phi(x^{k}) - \Bar{h}_{\Phi}^k\|^2 =& 2\|\grad\Phi(x^{k}) - \grad\hat{\Phi}(x^{k})\|^2 + 2\|\grad\hat{\Phi}(x^{k}) - \Bar{h}_{\Phi}^k\|^2\\
        \leq& 2\Gamma_0^2\dist(y^{k, T}, y^*(x^{k}))^2 + 2b_k^2,
    \end{aligned}
    \end{equation}
    where we use a similar process as the proof of Lemma \ref{lemma2} to bound $\|\grad\Phi(x^{k}) - \grad\hat{\Phi}(x^{k})\|$ and $\Gamma_0=\ell_{f, 1} + \frac{\ell_{f, 0}\ell_{g, 2}}{\mu} + \ell_{g, 1}(\ell_{g, 2}\ell_{f, 0} + \frac{\ell_{f, 1}}{\mu})=\mathcal{O}(\kappa)$. Thus we have
    \begin{equation}\label{theorem3_temp1}
    \begin{aligned}
        \E[\Phi(x^{k+1})\mid\mathcal{U}_{k}]\leq&\Phi(x^{k}) -\frac{\alpha}{2}\E[\|\grad\Phi(x^{k})\|^2\mid\mathcal{U}_{k}] - (\frac{\alpha}{2}-\frac{\alpha^2 L_{\Phi}}{2})\|\Bar{h}_{\Phi}^k\|^2\\
        & + \alpha\Gamma_0^2\dist(y^{k, T}, y^*(x^{k}))^2 + \alpha b_k^2+ \frac{\alpha^2 L_{\Phi}}{2}\Tilde{\sigma}^2.
    \end{aligned}
    \end{equation}
    
    Now we have 
    \[
    \begin{split}
        \E[V_{k+1}] &- \E[V_k] = \E[\Phi(x^{k+1})] - \E[\Phi(x^{k})] + \kappa\E\dist(y^{k, T}, y^*(x^{k+1}))^2 - \kappa\E\dist(y^{k-1, T}, y^*(x^{k}))^2 \\
        \leq & -\frac{\alpha}{2}\E[\|\grad\Phi(x^{k})\|^2\mid\mathcal{U}_{k}] - (\frac{\alpha}{2}-\frac{\alpha^2 L_{\Phi}}{2})\E\|\Bar{h}_{\Phi}^k\|^2+ \alpha b_k^2+ \frac{\alpha^2 L_{\Phi}}{2}\Tilde{\sigma}^2\\
        & + \kappa\E\dist(y^{k, T}, y^*(x^{k+1}))^2 - \kappa\E\dist(y^{k-1, T}, y^*(x^{k}))^2 + \alpha\Gamma_0^2\E\dist(y^{k, T}, y^*(x^{k}))^2 \\
        \leq & -\frac{\alpha}{2}\E[\|\grad\Phi(x^{k})\|^2\mid\mathcal{U}_{k}] - (\frac{\alpha}{2}-\frac{\alpha^2 L_{\Phi}}{2})\E\|\Bar{h}_{\Phi}^k\|^2+ \alpha b_k^2+ \frac{\alpha^2 L_{\Phi}}{2}\Tilde{\sigma}^2\\
        & + \kappa\E\bigg(2(1 - 2\mu\tau\beta^2)^{T}\dist(y^{k, 0}, y^*(x^k))^2 + 2\tau\beta^2\sigma^2 T + 4\tau \kappa^2\alpha^2\|\Bar{h}_{\Phi}^k\|_{x^{k}}^2 + 4\tau \kappa^2\alpha^2\Tilde{\sigma}^2\bigg) \\
        &- \kappa\E\dist(y^{k-1, T}, y^*(x^{k}))^2 + \alpha\Gamma_0^2\bigg((1 - 2\mu\tau\beta^2)^{T}\E\dist(y^{k, 0}, y^*(x^k))^2 + \tau\beta^2\sigma^2 T\bigg) \\
        = & -\frac{\alpha}{2}\E[\|\grad\Phi(x^{k})\|^2\mid\mathcal{U}_{k}] - \bigg(\frac{\alpha}{2}-\frac{\alpha^2 L_{\Phi}}{2}-4\tau \kappa^3\alpha^2\bigg)\E\|\Bar{h}_{\Phi}^k\|^2 \\
        &+ \bigg((2\kappa+\alpha\Gamma_0^2)(1 - 2\mu\tau\beta^2)^{T} - \kappa\bigg)\E\dist(y^{k-1, T}, y^*(x^{k}))^2 \\
        &+ \bigg(2\kappa +\alpha\Gamma_0^2\bigg)\tau\beta^2\sigma^2 T + \alpha b_k^2+ (\frac{ L_{\Phi}}{2}+4\tau \kappa^2)\alpha^2\Tilde{\sigma}^2,
    \end{split}
    \]
    where the first inequality is by (\ref{theorem3_temp1}) and the second inequality is by Lemma \ref{lemma5}, as well as the fact that $ y^{k+1, 0}=y^{k, T}$. To make the coefficients negative, notice that by taking
    \begin{equation*}
    \begin{split}
        &\alpha \leq \frac{1}{L_{\Phi} + 8\tau\kappa^2}\\
        &T\geq \log\bigg(\frac{1}{1 - 2\mu\tau\beta^2}\bigg) / \log\bigg(\frac{2\kappa+\alpha\Gamma_0^2}{\kappa}\bigg),
    \end{split}
    \end{equation*}
    we can guarantee
    \begin{equation*}
    \begin{split}
        &\frac{\alpha}{2}-\frac{\alpha^2 L_{\Phi}}{2}-4\tau \kappa^3\alpha^2 \geq 0\\
        &(2\kappa+\alpha\Gamma_0^2)(1 - 2\mu\tau\beta^2)^{T} - \kappa \leq 0.
    \end{split}
    \end{equation*}
    Therefore, we have
    \begin{equation}
    \begin{aligned}
        \E[V_{k+1}] - \E[V_k]\leq & -\frac{\alpha}{2}\E[\|\grad\Phi(x^{k})\|^2\mid\mathcal{U}_{k}] \\
        &+ \bigg(2\kappa +\alpha\Gamma_0^2\bigg)\tau\beta^2\sigma^2 T + \alpha b_k^2+ (\frac{ L_{\Phi}}{2}+4\tau \kappa^2)\alpha^2\Tilde{\sigma}^2.
    \end{aligned}
    \end{equation}
    Note that here we do not need an increasing $T$.
    
    Now taking the telescoping sum of the above inequality for $k=0,...,K-1$, we get
    $$
    \frac{1}{K}\sum_{k=0}^{K-1}\E[\|\grad\Phi(x^{k})\|^2]\leq \frac{2V_0}{\alpha K} + \frac{2}{K}\sum_{k=0}^{K-1}b_k^2 + \bigg(\frac{4\kappa}{\alpha} +2\Gamma_0^2\bigg)\tau\beta^2\sigma^2 T + (L_{\Phi}+8\tau \kappa^2)\alpha\Tilde{\sigma}^2.
    $$
    
    Now since $b_k^2=\ell_{f, 0}^2\kappa^2 (1-\frac{1}{\kappa})^{2 Q}$, the term $ \frac{2}{K}\sum_{k=0}^{K-1}b_k^2=\mathcal{O}(\frac{1}{\sqrt{K}})$ if $Q=\mathcal{O}(\kappa\log(K))$, following the inequality $(1-x)^n\leq e^{-n x}$. If we also select $\alpha_k=\alpha=\frac{1}{\kappa^{5/2}\sqrt{K}}$, $\beta_k=\beta=\min\{\frac{1}{\kappa^{7/4}\sqrt{K}}, \frac{1}{\ell_{g, 1}}\}$ and $T=\mathcal{O}(\kappa^4)$, we are able to get:
    $$
    \frac{1}{K}\sum_{k=0}^{K-1}\E[\|\grad\Phi(x^{k})\|^2]\leq \mathcal{O}(\frac{\kappa^{2.5}}{\sqrt{K}}).
    $$
    
    Now we inspect the oracle complexities. To ensure $\frac{1}{K}\sum_{k=0}^{K-1}\E[\|\grad\Phi(x^{k})\|^2]\leq\epsilon$, we need $K=\mathcal{O}(\kappa^5\epsilon^{-2})$, thus $\operatorname{Gc}(F,\epsilon)=\mathcal{O}(\kappa^5\epsilon^{-2})$; Also $\operatorname{Gc}(G,\epsilon)=K T=\mathcal{O}(\kappa^9\epsilon^{-2})$.
\end{proof}

\begin{remark}\label{rmk_hessian_estimator}
    Note that the trick for estimating the Hessian-vector product \eqref{vq} can also be applied to the deterministic case, without using conjugate gradient method, leading to an easier implementation. We just need to replace the stochastic functions in \eqref{vq} by their deterministic versions. In the experiments, we always use \eqref{vq} in this way instead of solving \eqref{AID_subproblem_approx} which uses $N$-step conjugate gradient method, while still achieving reasonable results numerically.
\end{remark}

\section{Numerical experiments on robust optimization on manifolds}
Consider the robust optimization on manifolds:
\begin{equation}
    \min_{x\in\M}\max_{y\in \Delta_n} \sum_{i=1}^{n} y_{i} \ell\left(x ; \xi_{i}\right) - \lambda\left\|{y}-\frac{\mathbf{1}}{n}\right\|^{2},
\end{equation}
where $\Delta_n:=\left\{y \in \mathbb{R}^{n}: \sum_{i=1}^{n} y_{i}=1, y_{i} \geq 0\right\}$ is the probability simplex, and $\ell$ is geodesically convex. This problem minimizes $n$ loss function by dynamically assigning different weights to them, and making sure that the larger loss has larger weights (see \cite{chen2017robust,huang2020gradient}). By minimax theorem we can exchange the min and max of the problem, thus it can be equivalently formulated as a bilevel optimization as follows:
\begin{equation}\label{robust_manifold_problem}
\begin{aligned}
    & \min_{y\in\Delta_n}\ \lambda\left\|{y}-\frac{\mathbf{1}}{n}\right\|^{2} - \sum_{i=1}^{n} y_{i} \ell\left(x ; \xi_{i}\right) \\
    & \text{ s.t. } x\in \argmin_{x\in\M}\ \sum_{i=1}^{n} y_{i} \ell\left(x ; \xi_{i}\right).
\end{aligned}
\end{equation}
{It is worth noticing that having an constraint set in the upper level problem is not covered in our theoretical analysis due to the fact that existing constrained Riemannian optimization techniques such as (stochastic) Riemannian Frank-Wolfe (see \cite{weber2022projection,weber2023riemannian}) require a mini-batch sampling technique, i.e., using \eqref{approximate_Neumann_stochastic} multiple times and taking the average of these estimators to estimate $\grad \Phi(x^k)$ to reduce the variance, which is not desirable in practice. Instead, we point out that since the upper level in \eqref{robust_manifold_problem} is a constrained optimization in a Euclidean space, one could utilize the analysis in \cite{hong2020two} to achieve a similar convergence result as the unconstrained case in \eqref{deterministic_problem}. Therefore we simply add a projection step for the upper level update, and we still observed reasonable convergence results. We present the algorithm we use for the numerical experiments in Algorithm \ref{algo_bilevel_robust}. Note that here the variables in the upper and lower level problems are respectively denoted by $y$ and $x$, which is different from the previous algorithms. It is also worth noticing that the convergence criteria are altered due to the existence of the projection step: in Algorithm \ref{algo_bilevel_robust}, we simply measure the norm of the quantity
$$
\mathcal{G}^k:=\frac{1}{\alpha_k}(y^{k} - y^{k+1})=\frac{1}{\alpha_k}(y^{k} - \proj_{\Delta_n}(y^{k}-\alpha_k h_{\Phi}^k)),
$$
which we refer to as the approximate gradient mapping. This quantity can be used for approximately measuring the stationarity since if we do not have a constraint,
$$
\mathcal{G}^k=\frac{1}{\alpha_k}(y^{k} - y^{k+1})=h_{\Phi}^k\approx \grad \Phi(y^k),
$$
based on Lemma \ref{lemma6}.
}

\begin{algorithm}[!ht]
\SetKwInOut{Input}{input}
\SetKwInOut{Output}{output}
\SetAlgoLined
\Input{$K$, $T$, $N$(steps for conjugate gradient), stepsize $\{\alpha_k,\beta_k\}$, initializations $y^0\in\Delta_n, x^0\in\N$}
    \For{$k=0,1,2,...,K-1$}{
        Set $x^{k, 0}=x^{k-1}$\;
        \For{$t=0,...,T-1$}{
            Update $x^{k, t+1}\leftarrow \Exp_{x^{k, t}}(-\beta_k h_{g}^{k, t})$ with $h_{g}^{k, t}:=\grad_y g(y^k, x^{k,t})$ \;
        }
        Set $x^{k}\leftarrow x^{k, T}$\;
        
        Update $y^{k+1}\leftarrow\proj_{\Delta_n}(y^{k}-\alpha_k h_{\Phi}^k)$ as in \eqref{vq}, in the view of Remark \ref{rmk_hessian_estimator}\;
    }
 \label{algo_bilevel_robust}
 \caption{Bilevel algorithm for robust manifold optimization problem \eqref{robust_manifold_problem}}
\end{algorithm}

We test our proposed algorithms on two concrete examples that lie in this scope: the robust Karcher mean problem and the robust covariance matrix estimation problems. In both experiments we only consider the deterministic function to test the efficacy of the proposed algorithm framework. In both experiments, we use RieBO (Algorithm \ref{algo_bilevel_AID_ITD}) while utilizing the trick in Remark \ref{rmk_hessian_estimator} to estimate the Hessian-vector products.

\subsection{Robust Karcher mean problem}
For the robust Karcher mean problem, one seeks to solve
\begin{equation}\label{robust_karcher}
\begin{aligned}
    & \min_{y\in\Delta_n}\ \left\|y-\frac{\mathbf{1}}{n}\right\|^{2} - \sum_{i=1}^{n} y_{i} \dist(S, A_i)^2\\
    & \text{ s.t. } S\in \argmin_{S\in\mathbb{S}^{d}_{++}}\ \sum_{i=1}^{n} y_{i} \dist(S, A_i)^2,
\end{aligned}
\end{equation}
where $A_i$'s are the symmetric positive definite data matrices, and $\dist(A, B):=\|\log(A^{-1/2}B A^{-1/2})\|_{F}$ is the geodesic distance of two positive definite matrices (see \citet[Chapter 6]{bhatia2009positive}). The squared geodesic distance guarantees the geodesic strong convexity of the lower level problem (see \cite{zhang2016first}), which further ensures that the bilevel problem \eqref{robust_karcher} is well-defined. For the function $h(S):=\dist(S, A)^2$, we have the Euclidean and Riemannian gradients as (see \cite{ferreira2019gradient}, \citet[Chapter 6]{bhatia2009positive}):
\begin{equation*}
\begin{aligned}
    & \nabla_S h(S) = S^{-1/2}\log(S^{1/2}A^{-1} S^{1/2}) S^{-1/2}, \\
    & \grad_S h(S) = S \nabla_S h(S) S = S^{1/2}\log(S^{1/2}A^{-1} S^{1/2}) S^{1/2}.
\end{aligned}
\end{equation*}
The Euclidean and Riemannian Hessian of $h(S):=\dist(S, A)^2$ are less straightforward to calculate, and to the best of our knowledge, they do not exist in the literature. Here we propose an implementable way to calculate it: first notice that (see \citet[Chapter 6]{bhatia2009positive})
$$
\nabla_S h(S) = S^{-1/2}\log(S^{1/2}A^{-1} S^{1/2}) S^{-1/2} = S^{-1} A^{1/2} \log(A^{-1/2} S A^{-1/2})A^{-1/2}.
$$
For any symmetric matrix $V$, we have
$$
\langle\nabla_S h(S), V\rangle = \tr (V S^{-1} A^{1/2} \log(A^{-1/2} S A^{-1/2})A^{-1/2}).
$$
To take the derivative of this, notice that $S$ appears twice in $\langle\nabla_S h(S), V\rangle$. Denote $\Tilde{h}(S_1, S_2) = \tr (V S_1^{-1} A^{1/2} \log(A^{-1/2} S_2 A^{-1/2})A^{-1/2})$, we know that (see \cite{petersen2008matrix})
$$
\frac{\partial \Tilde{h}}{\partial S_1} = -S_1^{-1} V A^{-1/2} \log(A^{-1/2} S_2 A^{-1/2})A^{1/2} S_1^{-1}.
$$

It remains to calculate ${\partial \Tilde{h}} / {\partial S_2}$, which takes a form of $l(S):=\tr(C \log(P S Q))$. 
Denote $Y=PSQ$ and $L=\log(Y)$, we have
$$
\begin{bmatrix}
    L & d L\\
    0 & L
\end{bmatrix} = \log\left(\begin{bmatrix}
    Y & d Y\\
    0 & Y
\end{bmatrix}\right) = \log\left(\begin{bmatrix}
    P & 0\\
    0 & P
\end{bmatrix}\begin{bmatrix}
    S & d S\\
    0 & S
\end{bmatrix}\begin{bmatrix}
    Q & 0\\
    0 & Q
\end{bmatrix}\right).
$$
Therefore, we get
$$
d l = \left\langle \begin{bmatrix}
    0 & C\\
    0 & 0
\end{bmatrix}, \log\left(\begin{bmatrix}
    P & 0\\
    0 & P
\end{bmatrix}\begin{bmatrix}
    S & d S\\
    0 & S
\end{bmatrix}\begin{bmatrix}
    Q & 0\\
    0 & Q
\end{bmatrix}\right) \right\rangle,
$$
where the inner product is simply the Euclidean inner product. We can plug $d S$ as standard Euclidean basis to obtain an representation of ${d l} / {d S}$, which will take $\mathcal{O}(d^2)$ number of times to cover all the entries. Nevertheless, this provides an implementable way to calculate the Euclidean and Riemannian Hessian.

To summarize, the Euclidean and Riemannian Hessian of $h$ can be calculated as follows.
\begin{equation}\label{eq_karcher_r_hessian}
\begin{aligned}
    & \nabla_S^2 h(S)[V] = -S^{-1} V A^{-1/2} \log(A^{-1/2} S A^{-1/2})A^{1/2} S^{-1} + L, \\
    & H_S (h(S))[V] = S \nabla_S^2 h(S)[V] S +\sym(S \nabla_S h(S) V),
\end{aligned}
\end{equation}
where each entry of matrix $L$ is calculated as follows:
$$
L_{i, j} = \left\langle \begin{bmatrix}
    0 & C\\
    0 & 0
\end{bmatrix}, \log\left(\begin{bmatrix}
    P & 0\\
    0 & P
\end{bmatrix}\begin{bmatrix}
    S & E_{i,j}\\
    0 & S
\end{bmatrix}\begin{bmatrix}
    Q & 0\\
    0 & Q
\end{bmatrix}\right) \right\rangle,
$$
Here the ($i,j$)-th entry of $E_{i,j}\in\RR^{d\times d}$ is one, and all other entries are zeros. Moreover,
\begin{equation*}
\begin{aligned}
    & P = A^{-1/2}, \\
    & Q = A^{-1/2}, \\
    & C = A^{-1/2} V S^{-1} A^{1/2}.
\end{aligned}
\end{equation*}

In the experiment, we test RieBO (Algorithm \ref{algo_bilevel_AID_ITD}) with $d\in\{10, 20\}$ and $n=5$. We repeat each dimension settings for 5 times and plot the average. The algorithm is terminated with $K=200$ rounds of outer iterations, and the inner iteration is also taken to be $T=200$ (the value which we observe a good inner iteration convergence). We take $\alpha_k=10^{-2}$ and $\beta_k=10^{-1}$. Figure \ref{fig:karcher_plots} shows the results of the robust Karcher mean problem \eqref{robust_karcher}. It can be seen from Figure \ref{fig:karcher_plots} that Algorithm \ref{algo_bilevel_robust} can efficiently decrease both the function values and the norm of gradient mappings. We point out here that the computation of the Riemannian Hessian is time consuming by \eqref{eq_karcher_r_hessian} (which is also the reason why we cannot try larger dimensions), yet we remind the reader that this is currently the only formula for calculating it.

\begin{figure}[t!]
    \begin{center}

    \subfigure[Function value]{\includegraphics[width=0.44\textwidth]{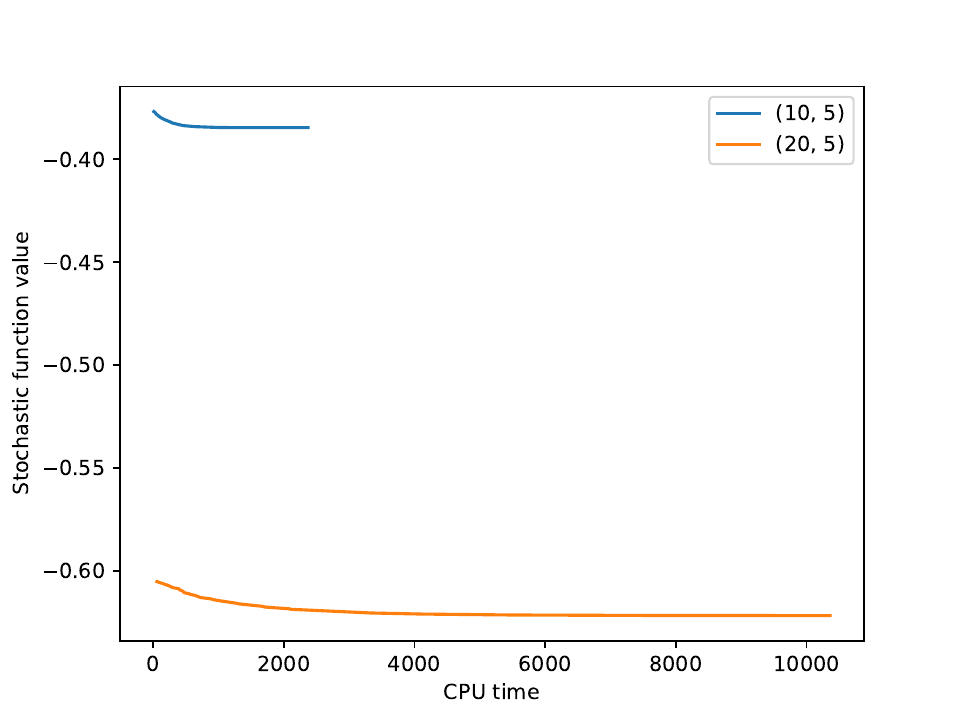}}
    \subfigure[Norm of the approximate gradient mapping]{\includegraphics[width=0.44\textwidth]{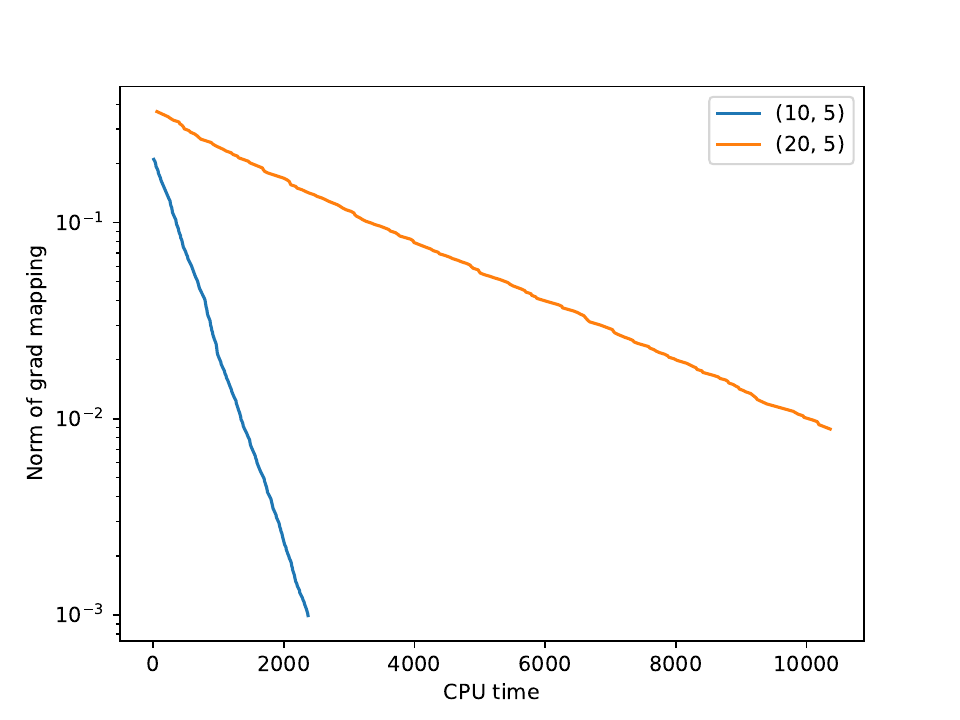}}
    
    \caption{The convergence curve of applying Algorithm \ref{algo_bilevel_robust} to the robust Karcher mean problem \eqref{robust_karcher}. The CPU time is in seconds.}
    \label{fig:karcher_plots}
    \end{center}
\end{figure}

\subsection{Robust maximum likelihood estimation}
For the robust maximum likelihood estimation of the covariance matrix, one seeks to solve:
\begin{equation}\label{robust_cov_mle}
\begin{aligned}
    & \min_{y\in\Delta_n}\ \left\|y-\frac{\mathbf{1}}{n}\right\|^{2} - \sum_{i=1}^{n} y_{i} \mathcal{L}(S; x_i)\\
    & \text{ s.t. } X\in \argmin_{S\in\mathbb{S}^{d}_{++}}\ \sum_{i=1}^{n} y_{i} \mathcal{L}(S; x_i),
\end{aligned}
\end{equation}
where $\mathcal{L}(S; x)$ is the log likelihood of the Gaussian distribution, namely
\begin{equation}\label{eq_Gaussian_mle}
    \mathcal{L}(S;\mathcal{D}):=\frac{1}{2}\logdet(S) + \frac{x^\top S^{-1}x}{2}.
\end{equation}
Note that this lower level problem is geodesically strictly convex (see \cite{sra2015conic}), and thus has a unique solution. The calculations of the Riemannian gradient, Hessian-vector product and cross-derivatives all have closed form solutions (following \cite{petersen2008matrix}). 

In the experiment, we test our algorithm with $d\in\{10, 30,50\}$ and $n=100$. We repeat each dimension settings for 5 times and plot the average. The algorithm is terminated with $K=1000$ rounds of outer iterations, and the inner iteration is still taken to be $T=200$ (again a value which we observe a good inner iteration convergence). We take $\alpha_k=10^{-2}$ and $\beta_k=10^{-1}$. Figure \ref{fig:MLE_plots} shows the results when applying RieBO to the above robust MLE problem with different choices of dimensions. It can be seen from Figure \ref{fig:MLE_plots} that Algorithm \ref{algo_bilevel_robust} can efficiently decrease both the function values and the norm of gradient mappings. Also, here we are able to test and present the results for a much larger dimension due to much faster calculations of Riemannian gradients, Hessian-vector products and cross-derivatives.

\begin{figure}[t!]
    \begin{center}

    \subfigure[Function value]{\includegraphics[width=0.49\textwidth]{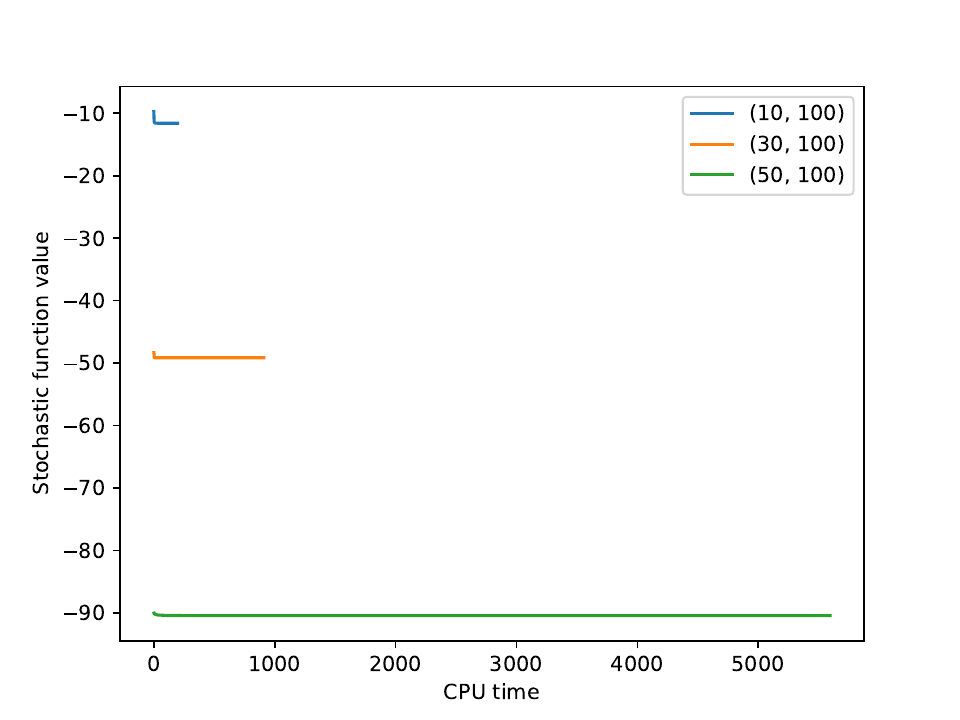}}
    \subfigure[Norm of the approximate gradient mapping]{\includegraphics[width=0.49\textwidth]{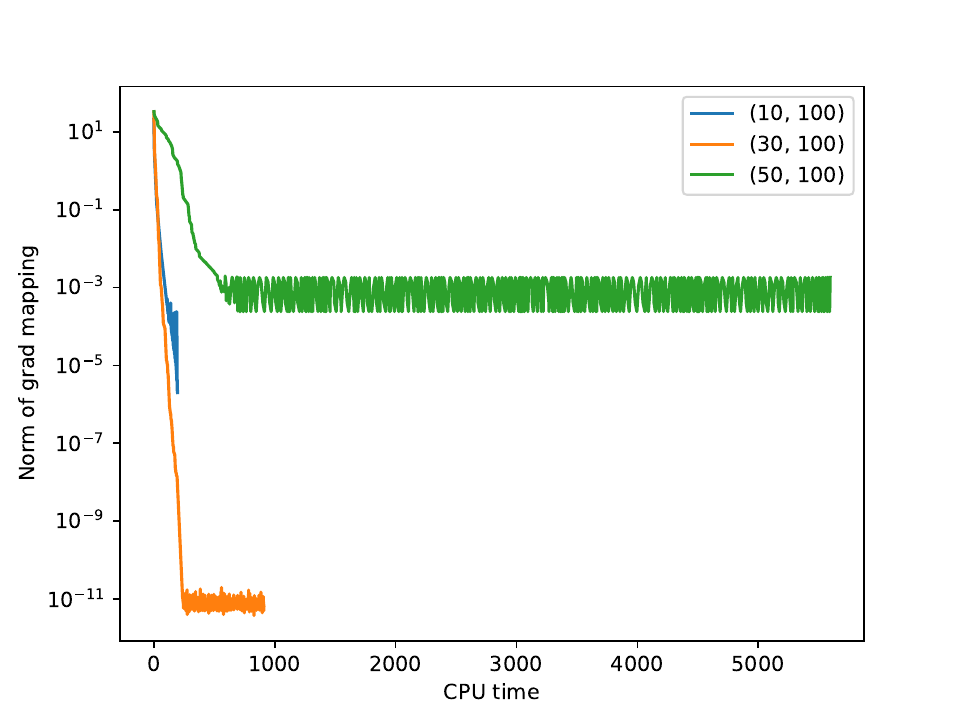}}
    
    \caption{The convergence curve of Algorithm \ref{algo_bilevel_robust} applying to the robust covariance matrix maximum likelihood estimation problem (\ref{robust_cov_mle}) with different choice of $(d,n)$. The CPU time is in seconds.}
    \label{fig:MLE_plots}
    \end{center}
\end{figure}

\section{Conclusion}
We introduced the Riemannian bilevel optimization, a generalization of the traditional Euclidean bilevel optimization. We show that the Riemannian counterparts of Euclidean algorithms in \cite{chen2021tighter,ji2021bilevel} can achieve the same rate of convergence. 

Our work raises several open questions. The first is how we can make the convergence independent of the sectional curvature of the manifold, similar to the results in \cite{cai2023curvature}. It is also worth exploring the last iterate convergence of Riemannian bilevel problem. Last, it still needs investigation to see if there are efficient algorithms that can overcome the difficulty we mentioned in the numerical experiment part to efficiently calculate the Riemannian Hessian-vector product thus enabling large-scale implementation of algorithms for solving the Riemannian bilevel optimization problems.

\bibliographystyle{abbrvnat} 
\bibliography{bibfile}

\end{document}